\numberwithin{equation}{section}
\newtheorem{proposition}{Proposition}[section]
\newtheorem{theorem}[proposition]{Theorem}
\newtheorem{lemma}[proposition]{Lemma}
\newtheorem{corollary}[proposition]{Corollary}
\newtheorem{problem}[proposition]{Problem}
\theoremstyle{definition}
\newtheorem{example}[proposition]{Example}
\newcommand{\lm}{\operatorname{lim}}
\begin{document}
\title[Function spaces and hyperspaces]{A unified theory of function spaces
and hyperspaces: local properties}
\author{Szymon Dolecki}
\address{Mathematical Institute of Burgundy\\
Burgundy University, B.P. 47 870, 21078 Dijon, France}
\author{Fr\'{e}d\'{e}ric Mynard}
\address{Department of Mathematical Sciences, Georgia Southern University,
PB 8093, Statesboro GA 30460, U.S.A.}
\email{dolecki@u-bourgogne.fr}
\email{fmynard@georgiasouthern.edu}
\date{%
\today
}
\maketitle

\begin{abstract}
Every convergence (in particular, every topology) $\tau $ on the hyperspace $%
C\left( X,\$\right) $ \emph{preimage-wise} determines a convergence $\tau
^{\Uparrow }$ on $C\left( X,Z\right) $, where $X,Z$ are topological spaces
and $\$$ is the \emph{Sierpi\'{n}ski topology}, so that $f\in
\lm_{\tau ^{\Uparrow }}\mathcal{F}$ if and only if $f^{-1}(U)\in
\lm_{\tau }\mathcal{F}^{-1}(U)$ for every open subset $U$ of $Z$.
Classical instances are the \emph{pointwise, compact-open and Isbell}
topologies, which are preimage-wise with respect to the topologies, whose
open sets are the collections of, respectively, all (openly isotone) \emph{%
finitely generated}, \emph{compactly generated} and \emph{compact }families
of open subsets of $X$ (compact families are precisely the open sets of the 
\emph{Scott topology}); the \emph{natural }(that is, \emph{continuous})
convergence is preimage-wise with respect to the \emph{natural} \emph{%
hyperspace} convergence.

It is shown that several fundamental local properties hold for a hyperspace
convergence $\tau $ (at the whole space) if and only if they hold for $\tau
^{\Uparrow }$ on $C\left( X,\mathbb{R}\right) $ at the origin, provided that
the underlying topology of $X$ have some $\mathbb{R}$-separation properties.
This concerns character, tightness, fan tightness, strong fan tightness, and
various Fr\'{e}chet properties (from the simple through the strong to that
for finite sets) and corresponds to various covering properties (like Lindel%
\"{o}f, Rothberger, Hurewicz) of the underlying space $X$.

This way, many classical results are unified, extended and improved. Among
new surprising results: the tightness and the character of the natural
convergence coincide and are equal to the Lindel\"{o}f number of the
underlying space; The Fr\'{e}chet property coincides with the Fr\'{e}chet
property for finite sets for the hyperspace topologies generated by compact
networks.
\end{abstract}

\section{Introduction}

The study of the interplay between properties of a topological space $X$ and
those of the associated space $C(X,Z)$ of continuous functions from $X$ to
another topological space $Z$, endowed with convergence structures, is one
of the central themes of topology, and an active area interfacing \emph{%
topology} and \emph{functional analysis}. Most prominent instances for the
space $Z$ is the real line $\mathbb{R}$ (with the usual topology) and a
two-point set $\left\{ 0,1\right\} $ with the \emph{Sierpi\'{n}ski topology} 
$\$:=\left\{ \varnothing ,\left\{ 1\right\} ,\left\{ 0,1\right\} \right\} $ (%
\footnote{%
The present paper focuses as well on these cases.}). By the usual
identifications (\footnote{%
of subsets and maps valued in $\left\{ 0,1\right\} $. More precisely, the
set $C(X,\$)$ is identified with the set of open subsets of $X$, because the
characteristic function of a subset $A$ of $X$, defined by $\chi _{A}(x)=1$
if and only if $x\in A$, is continuous from $X$ to $\$$ if and only if $A$
is open. Of course, $C(X,\$)$ could also be identified with the set of
closed subsets of $X$.}), $C(X,\$)$ becomes the \emph{hyperspace}, either of
open or of closed subsets of $X$.

The \emph{topology of pointwise convergence} (\emph{pointwise topology}, 
\emph{finite-open topology}) is the structure of choice for a sizable share
of such investigations, in part because of its obvious relevance to
functional analysis (weak topologies). The book \cite{Arh.function} of
Arhangel'skii gives a thorough account of this still very active area.

The \emph{compact-open topology} is another most frequently studied
structure; in functional analysis, compatible locally convex topologies are
characterized (via the Arens-Mackey theorem) as those of uniform convergence
on some families of compact sets. The book \cite{McCoy} of McCoy and Ntantu
treats \emph{pointwise convergence} and \emph{compact-open topology}
simultaneously by considering topologies on $C(X,Z)$ with a subbase given by
sets of the form%
\begin{equation}
\lbrack D,U]:=\{f\in C(X,Z):f(D)\subseteq U\},  \label{eq:bracketnotation}
\end{equation}%
where $U$ ranges over open subsets of $Z$ and $D$ ranges over a network $%
\mathcal{D}$ of compact subsets of $X$.

The pointwise topology is the coarsest structure on $C(X,Z)$, for which the
natural coupling%
\begin{equation}
\left\langle \cdot ,\cdot \right\rangle :X\times C(X,Z)\rightarrow Z
\label{natcoupl}
\end{equation}%
is pointwise continuous for each $x\in X$. The \emph{continuous convergence} 
$[X,Z]$ is the coarsest structure on $C(X,Z)$, for which (\ref{natcoupl}) is
jointly continuous. Therefore it satisfies the exponential law (\footnote{%
that is, $[Y,[X,Z]]$ is homeomorphic to $[X\times Y,Z]$ (under $%
^{t}f(y)(x)=f(x,y)$).}) and, as such, has been called \emph{natural
convergence} (e.g., \cite{EscardoLawson}), the terminology that we adopt
here. The exceptional role of the natural convergence among all function
space structures on $C(X,Z)$ was recognized as early as \cite{arensdug} by
Arens and Dugundji, and a compelling case for its systematic use in
functional analysis was made by Binz in \cite{Binz} and more recently and
thoroughly by Beattie and Butzmann in \cite{BB.book}.

As a consequence, even though this paper is mostly focused on\textit{\
completely regular topological spaces }$X$\textit{, }no a priori assumption
is made on the function space convergence structures on $C(X,Z)$. We refer
to \cite{dolecki.BT} for basic terminology and notations on convergence
spaces.

The \emph{Isbell topology} \cite{Isbell75function} was conceived by Isbell
in a hope to provide the topological modification of the natural convergence
on $C(X,Z)$. This is actually the case, when $Z$ is the \emph{Sierpi\'{n}ski
topology} (then the Isbell topology becomes the \emph{Scott topology} on the
lattice of open sets). It is why the Isbell topology plays a central role
when investigating topological spaces from a lattice-theoretic viewpoint 
\cite{compedium}, \cite{contlattices}.

If $\theta $ is a convergence structure (in particular a topology) on $%
C(X,Z),$ we denote by $C_{\theta }(X,Z)$ the corresponding convergence space.

In \cite{D.pannonica} and \cite{groupIsbell}, we studied topologies $\alpha
\left( X,Z\right) $ on $C(X,Z)$ generated by collections $\alpha $ of
families of subsets of $X$ (the $Z$\emph{-dual topology }of $\alpha $), for
which a subbase of open sets consists of%
\begin{equation}
\lbrack \mathcal{A},U]:=\left\{ f\in C\left( X,Z\right) :f^{-}\left(
U\right) \in \mathcal{A}\right\} ,  \label{eq:topbase}
\end{equation}%
where $\mathcal{A}\in \alpha $ and $U$ ranges over the open subsets of $Z$,
and%
\begin{equation*}
f^{-}\left( U\right) :=\left\{ x\in X:f\left( x\right) \in U\right\} 
\end{equation*}%
is our usual shorthand for $f^{-1}\left( U\right) $. If $\alpha $ consists
of (openly isotone) \emph{compact families} (of open subsets of $X$), then $%
\alpha \left( X,Z\right) $ is coarser than the natural convergence $\left[
X,Z\right] $ (that is, is \emph{splitting} according to a widespread
terminology). Pointwise topology, compact-open topology and the Isbell
topology are particular cases of a general scheme (\footnote{%
(\ref{eq:bracketnotation}) is a special case, in which $\mathcal{A}=\mathcal{%
A}_{D}$ is the family of all the open subsets of $X$ that include $D$. Then $%
\alpha =\left\{ \mathcal{A}_{D}:D\in \mathcal{D}\right\} $.}).

The relationship between convergences and topologies on \emph{functional
spaces} $C(X,\mathbb{R})$ and the corresponding convergences and topologies
on \emph{hyperspaces} $C(X,\$)$ (of closed sets or of open sets) is a
principal theme of this paper. Functional spaces and hyperspaces are
intimately related, but also differ considerably for certain aspects. For
instance, the $\mathbb{R}$-dual topologies of collections $\alpha $ of
compact families on completely regular spaces are completely regular, while $%
\$$-dual topologies for the same collections are $T_{0}$ but never $T_{1}$.

Topologies $\alpha \left( X,\$\right) $ and the convergence $\left[ X,\$%
\right] $ have a simpler structure than their counterparts $\alpha \left( X,%
\mathbb{R}\right) $ and $\left[ X,\mathbb{R}\right] $. Actually the
collection $\alpha $ is (itself) a subbabse of open sets of $\alpha \left(
X,\$\right) $. Local properties of $\alpha \left( X,\$\right) $ are
equivalent to some global (covering) properties of $X$ and this equivalence
is usually easily decoded. Therefore in the study of the interdependence
between $X$ and $C(X,\mathbb{R})$, it is essential to comprehend the
relationship between $C(X,\mathbb{R})$ and $C(X,\$)$.

A crucial observation made in \cite{D.pannonica} was that all the mentioned
topologies and convergences on $C(X,\mathbb{R})$ can be characterized
preimage-wise with the aid of the corresponding topologies and convergences
on $C(X,\$).$

In the present paper we unify the investigations of local properties of $C(X,%
\mathbb{R})$ and of $C(X,\$)$ by revealing an abstract connection between
them that embraces all the discussed cases.

As a convention $C_{\tau }(X,\$)$ will always denote the hyperspace of open
subsets of $X$ (endowed with $\tau $) and $cC_{\tau }(X,\$)$ will denote the
homeomorphic image of $C_{\tau }(X,\$)$ under complementation, which is the
corresponding hyperspace of closed subsets of $X$. If $F$ is a set of maps $%
f:X\rightarrow Z$, then $F^{-}(B):=\left\{ f^{-}(B):f\in F\right\} $ and,
for a family $\mathcal{F}$ of sets of such maps, $\mathcal{F}%
^{-}(B):=\left\{ F^{-}(B):F\in \mathcal{F}\right\} .$ We say that $\theta $
is \emph{preimage-wise with respect to} $\tau $ if%
\begin{equation}
f\in \lm_{\theta }\mathcal{F}\Longleftrightarrow \forall _{U\in
C(Z,\$)}\;f^{-}(U)\in \lm_{\tau }\mathcal{F}^{-}(U).
\label{eq:preimage}
\end{equation}

As we shall see, all the topologies $\alpha \left( X,Z\right) $, defined via
(\ref{eq:topbase}), in particular the pointwise, compact-open and Isbell
topologies, as well as the natural convergence $\left[ X,Z\right] $ are
preimage-wise with respect to their hyperspace cases: $\alpha \left(
X,\$\right) $ and $\left[ X,\$\right] $.

This is a special case of the following scheme. Each $h\in C\left(
Z,W\right) $ defines the \emph{lower conjugate} map $h_{\ast }:C\left(
X,Z\right) \rightarrow C\left( X,W\right) $ given by $h_{\ast }\left(
f\right) :=h\circ f$. Each convergence $\tau $ on $C\left( X,W\right) $
determines on $C\left( X,Z\right) $ the coarsest convergence for which $%
h_{\ast }$ is continuous for every $h\in C\left( Z,W\right) $. In the
particular case when $W$ is the Sierpi\'{n}ski topology $\$$, then for each $%
U\in C\left( Z,\$\right) $, the image $U_{\ast }\left( f\right) \in C\left(
X,\$\right) $ and%
\begin{equation}
U_{\ast }\left( f\right) =f^{-}\left( U\right) .  \label{eq:adjoint}
\end{equation}%
In other words, if an element $U$ of $C\left( Z,\$\right) $ is identified
with an open set (via the characteristic function), then in the same way $%
U_{\ast }\left( f\right) $ is identified with the preimage of $U$ by $f$.
Therefore $\theta $ is \emph{preimage-wise with respect to} $\tau $, if the
source%
\begin{equation}
\left( U_{\ast }:C_{\theta }(X,Z)\rightarrow C_{\tau }(X,\$)\right) _{U\in
C(Z,\$)}  \label{eq:generalview}
\end{equation}%
is initial, that is, if $\theta $ is the coarsest convergence on $C(X,Z)$
making each map $U_{\ast }:C(X,Z)\rightarrow C_{\tau }(X,\$)$ continuous (%
\footnote{%
As the category of topological spaces and continuous maps is reflective in
that of convergence spaces (and continuous maps), the coarsest convergence
making the maps $U_{\ast }$ continuous is also the coarsest topology with
this property, whenever $\tau $ is topological.}).

Preimage-wise approach has been implemented in various branches of
mathematics (\footnote{%
Lebesgue says that the idea of preimage-wise study of functions was pivotal
for the theory of his integral \cite{lebesgue}. Greco characterized minmax
properties of real functions in terms of their preimages introducing a
counterpart of measurable sets in analysis \cite{greco.mesurabilite}, \cite%
{greco.decomp}, \cite{greco.minimax}.}). In the study of function spaces,
Georgiou, Iliadis and Papadopoulos in \cite{GIP.dual} considered $Z$-dual
topologies of the type (\ref{eq:topbase}) as well as the topologies on the
set $\left\{ f^{-}\left( U\right) :f\in C\left( X,Z\right) ,U\in C\left(
Z,\$\right) \right\} $ of the form $\left\{ \mathcal{H}^{-}\left( U\right) :%
\mathcal{H}\in \theta ,U\in C\left( Z,\$\right) \right\} $, where $\theta $
is an arbitrary topology on $C\left( X,Z\right) $ (\footnote{%
In general, a topology obtained this way is finer than the restriction of a
topology $\tau $, for which $\theta $ is preimage-wise.}).

The so-called $\gamma $\emph{-connection} of Gruenhage, e.g. \cite%
{Gruenhage.prodfrechet}, is a very particular instance of our preimage-wise
approach (it describes the neighborhood filter of the whole space $X$ for
the pointwise topology on the hyperspace $C\left( X,\$\right) $ of open
sets). Jordan exploited the $\gamma $-connection in \cite{francis.Cp}
establishing a relation between the neighborhood filter of the zero function
in $C_{p}(X,\mathbb{R})$ and the neighborhood filter of the whole space $X$
in $C_{p}(X,\$)$ with the aid of \emph{composable} and \emph{steady}
relations, which enables a transfer of many local properties, like tightness
or character, preserved by such relations.

Jordan's paper is a prefiguration of our theory. Actually the first author
realized that Jordan's approach can be easily extended to general topologies 
$\alpha \left( X,Z\right) $, encompassing, among others, the topology of
pointwise convergence, the compact-open topology and the Isbell topology 
\cite{D.pannonica}. On the other hand, the fact that the natural (or
continuous) convergence fits (\ref{eq:generalview}), that is, that $[X,Z]$
is pre-imagewise with respect to $[X,\$]$, was observed before, e.g. \cite%
{schwarz.powers}.

Yet, even though the relationship between hyperspace structures and function
space structures has been identified on a case by case basis, and even as an
abstract scheme in \cite{GIP.dual} for topologies, it seems that no
systematic use of this situation is to be found in the literature before 
\cite{D.pannonica}. In the present paper, we extend the results of \cite%
{D.pannonica} to general convergences, simplify some of the arguments and
clarify the role of topologicity, and obtain as by-products a wealth of
classical results for function space topologies, as well as new results for
the natural convergence. In particular, we obtain the surprising result that
the character and tightness of the natural convergence on real valued
continuous functions coincide, and are equal to the Lindel\"{o}f degree of
the underlying space.

\section{Preimagewise convergences}

Let $Z$ be a topological space. If $\tau $ is a convergence on $C(X,\$)$,
then $\tau ^{\Uparrow }$ is the convergence on $C(X,Z)$ defined by 
\begin{equation}
f\in \lm_{\tau ^{\Uparrow }}\mathcal{F}\Longleftrightarrow \forall
_{U\in C(Z,\$)}\;f^{-}(U)\in \lm_{\tau }\mathcal{F}^{-}(U).
\label{general-def}
\end{equation}%
In view of (\ref{eq:preimage}), $\tau ^{\Uparrow }$ is preimage-wise with
respect to $\tau $. If for a convergence $\theta $ on $C\left( X,Z\right) $
there exists a convergence on $C\left( X,\$\right) $ with respect to which $%
\theta $ is preimage-wise, then there is a finest convergence $\theta
^{\Downarrow }$ on $C\left( X,\$\right) $ among those $\tau $ for which $%
\theta =\tau ^{\Uparrow }$. Hence, $\tau ^{\Uparrow \Downarrow \Uparrow
}=\tau ^{\Uparrow }$ for each $\tau $.

As we shall show Proposition \ref{pro:idealbasis} below, it is not necessary
to test that $f^{-}(U)\in \lm_{\tau }\mathcal{F}^{-}(U)$ for every
open subset $U$ of $Z$ in (\ref{general-def}), but only for the elements of
an \emph{ideal }(\footnote{%
that is, closed under finite unions}) \emph{basis }of the topology on $Z$.
In terms of closed sets, (\ref{general-def}) becomes%
\begin{equation*}
f\in \lm_{\tau ^{\Uparrow }}\mathcal{F}\Longleftrightarrow \forall
_{C\in cC(Z,\$)}\;f^{-}(C)\in \lm_{c\tau }\mathcal{F}^{-}(C).
\end{equation*}%
In the formula above, analogously to (\ref{general-def}), it is enough test $%
f^{-}(C)\in \lm_{c\tau }\mathcal{F}^{-}(C)$ for the elements of a 
\emph{filtered }(\footnote{%
that is, closed under finite intersections}) \emph{basis (of closed sets)}.

The following is an immediate consequence of the definition.

\begin{proposition}
If $\mathbf{J}$ is a concretely reflective category of convergences, $%
C_{\tau }(X,\$)$ is an object of $\mathbf{J}$, then $C_{\tau ^{\Uparrow
}}(X,Z)$ is also an object of $\mathbf{J}$.
\end{proposition}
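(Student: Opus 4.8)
The plan is to show that the construction $\tau\mapsto\tau^{\Uparrow}$ respects the concrete reflector defining $\mathbf J$. Concretely, let $R$ denote the reflector onto $\mathbf J$; since $\mathbf J$ is \emph{concretely} reflective, for every convergence space $Y$ the reflection $RY$ has the same underlying set as $Y$, the identity map $Y\to RY$ is continuous, and $R$ is characterized by the universal property that a map $Y\to W$ with $W\in\mathbf J$ is continuous if and only if the corresponding map $RY\to W$ is continuous. The hypothesis is that $C_{\tau}(X,\$)$ already lies in $\mathbf J$, equivalently that the reflection map on $C_{\tau}(X,\$)$ is a homeomorphism, i.e. $\tau=R\tau$. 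The goal is to conclude $\tau^{\Uparrow}\in\mathbf J$, i.e. $\tau^{\Uparrow}=R(\tau^{\Uparrow})$.

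First I would record the universal property that characterizes $\tau^{\Uparrow}$. By (\ref{eq:generalview}) and the discussion following it, $\tau^{\Uparrow}$ is exactly the initial convergence on $C(X,Z)$ induced by the source $\bigl(U_{\ast}:C(X,Z)\to C_{\tau}(X,\$)\bigr)_{U\in C(Z,\$)}$; that is, $\tau^{\Uparrow}$ is the coarsest convergence making every $U_{\ast}$ continuous into $C_{\tau}(X,\$)$. The key categorical fact I would invoke is that a concretely reflective subcategory of the (topological) category of convergence spaces is closed under the formation of initial structures (initial sources, hence in particular products and subspaces): if each codomain of an initial source lies in $\mathbf J$, then the domain, equipped with the initial convergence, lies in $\mathbf J$ as well. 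This is the standard characterization of reflective subcategories that are closed under the relevant limits, and it is precisely what ``concretely reflective'' buys us here.

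The main steps are then: (i) identify $\tau^{\Uparrow}$ as the initial convergence for the source $(U_{\ast})_{U}$ with common codomain $C_{\tau}(X,\$)$; (ii) use the hypothesis to note that every codomain $C_{\tau}(X,\$)$ is an object of $\mathbf J$; and (iii) apply closedness of $\mathbf J$ under initial sources to conclude $C_{\tau^{\Uparrow}}(X,Z)\in\mathbf J$. For step (iii) the direct argument runs through the reflection: the identity $\iota:C_{\tau^{\Uparrow}}(X,Z)\to R\,C_{\tau^{\Uparrow}}(X,Z)$ is continuous, so it suffices to show its inverse is continuous, i.e. that $R(\tau^{\Uparrow})$ is coarser than $\tau^{\Uparrow}$. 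Since $R(\tau^{\Uparrow})$ is at least as fine as $\tau^{\Uparrow}$, each $U_{\ast}:C_{R(\tau^{\Uparrow})}(X,Z)\to C_{\tau}(X,\$)$ is continuous; as $C_{\tau}(X,\$)\in\mathbf J$, the universal property of the reflection lets these maps factor, showing that $U_{\ast}$ is continuous out of the target of the reflection, and by the coarsest (initial) property of $\tau^{\Uparrow}$ this forces $R(\tau^{\Uparrow})$ to be coarser than $\tau^{\Uparrow}$, hence equal.

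The step I expect to require the most care is the clean identification of $\tau^{\Uparrow}$ as an initial structure rather than merely checking the biconditional (\ref{general-def}) by hand, and verifying that the relevant formulation of ``concretely reflective'' genuinely yields closure under initial sources in the convergence setting (as opposed to only under products or only under subspaces). Once that categorical lemma is in place the conclusion is essentially formal; the remaining work is just making sure the source in (\ref{eq:generalview}) is the correct one and that the footnote's remark about topological reflectors does not interfere. Everything else reduces to the universal property of the reflection and the initiality of $\tau^{\Uparrow}$, so the bulk of the proof is bookkeeping rather than genuine obstacle.
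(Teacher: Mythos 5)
Your argument is correct and is essentially the one the paper intends: the paper offers no written proof (calling the proposition an immediate consequence of the definition) precisely because, as you identify, $\tau ^{\Uparrow }$ is the initial convergence for the source $\left( U_{\ast }:C(X,Z)\rightarrow C_{\tau }(X,\$)\right) _{U\in C(Z,\$)}$ and a concretely reflective subcategory of convergence spaces is closed under initial sources, so the conclusion is formal. One caveat on wording: since the identity-carried reflection map goes from the finer to the coarser structure, $R(\tau ^{\Uparrow })$ is \emph{coarser}, not finer, than $\tau ^{\Uparrow }$, so your phrase asserting the opposite is a slip --- harmless, though, because the continuity of each $U_{\ast }$ out of $R(\tau ^{\Uparrow })$ is then correctly derived, in the same sentence, from the universal property of the reflection together with $C_{\tau }(X,\$)\in \mathbf{J}$, after which initiality of $\tau ^{\Uparrow }$ forces equality.
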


In particular, if $\tau $ is a topology, a pretopology or a pseudotopology,
so is $\tau ^{\Uparrow }$.

In the particular important case where $Z=\mathbb{R}$ (\footnote{%
More generally, if $Z$ is perfectly normal.}), the preimage of a closed set
by a continuous function is a \emph{zero set}, because all closed subsets of 
$\mathbb{R}$ are zero sets. Therefore, a $\tau $-preimage-wise\emph{\ }%
convergence on $C(X,\mathbb{R})$ is determined by the restriction of $\tau $
to the cozero sets of $X$ (or the restriction of $c\tau $ to zero sets).
More generally, we say that an open subset $G$ of $X$ is $Z$\emph{%
-functionally open} if there exist $f\in C\left( X,Z\right) $ and $U\in
C\left( Z,\$\right) $ such that $G=f^{-}\left( U\right) $. Of course, all
the elements of $C\left( X,\$\right) $ that are not $Z$\emph{-functionally
open} are isolated for $\tau ^{\Uparrow \Downarrow }$.

\section{Fundamental examples of preimagewise convergences}

Recall that the topology of pointwise convergence as well as the
compact-open topology on $C(X,Z)$ admit subbases of the form $\{[D,U]:U\in
C(Z,\$),D\in \mathcal{D}\}$ where $\mathcal{D}$ is the collection $%
[X]^{<\infty }$ of finite subsets of $X$ in the former case, and the
collection $\mathcal{K}(X)$ of compact subsets of $X$ in the latter, and $%
[D,U]$ is defined by $\left( \ref{eq:bracketnotation}\right) $. We extend
this notation to families of subsets of $X$ by%
\begin{equation*}
\lbrack \mathcal{A},U]:=\bigcup_{A\in \mathcal{A}}[A,U]=\left\{
f:f^{-}\left( U\right) \in \mathcal{A}\right\} .
\end{equation*}%
\newline
If $A$ is a subset of $X$ then $\mathcal{O}_{X}(A)$ denotes the collection
of open subsets of $X$ that contains $A$, and if $\mathcal{A}$ is a
collection of subsets of $X$ then $\mathcal{O}_{X}(\mathcal{A}%
):=\bigcup_{A\in \mathcal{A}}\mathcal{O}_{X}(A)$. A family $\mathcal{A}%
\subseteq C(X,\$)$ is called \emph{compact }if $\mathcal{A}=\mathcal{O}_{X}(%
\mathcal{A})$ and whenever $\mathcal{B}\subseteq C(X,\$)$ such that $%
\bigcup_{B\in \mathcal{B}}B\in \mathcal{A}$, there exists a finite
subcollection $\mathcal{S}$ of $\mathcal{B}$ such that $\bigcup_{B\in 
\mathcal{S}}B\in \mathcal{A}$. The collection $\kappa (X)$ of all compact
families form a topology on $C(X,\$),$ known as the \emph{Scott topology}
(for the lattice of open subsets of $X$ ordered by inclusion) (\footnote{%
The homeomorphic image of $C_{\kappa }(X,\$)$ is the hyperspace $cC_{\kappa
}(X,\$)$ of closed subsets of $X$ endowed with the \emph{upper Kuratowski
topology.}}). The \emph{Isbell topology} on $C(X,Z)$ has a subbase composed
of the sets of the form $[\mathcal{A},U]$ where $U$ ranges over $C(Z,\$)$
and $\mathcal{A}$ ranges over $\kappa (X)$. With the simple observation that%
\begin{equation}
\lbrack \mathcal{O}_{X}(D),U]=[D,U],  \label{eq:Oregular}
\end{equation}%
whenever $U$ is open, one concludes that the topology of pointwise
convergence, the compact-open topology and the Isbell topology are three
instances of function space topologies determined by some $\alpha \subseteq
C(X,\$)$. Indeed, if $\alpha $ is \emph{non-degenerate}, that is, $\alpha
\setminus \varnothing \neq \varnothing ,$ the family%
\begin{equation}
\{[\mathcal{A},U]:\mathcal{A}\in \alpha ,U\in C(Z,\$)\}
\label{funct:subbase}
\end{equation}%
is a subbase for a topology on $C(X,Z),$ denoted $\alpha (X,Z)$. Such
topologies have been called \emph{family-open }in \cite{GIP.dual}. The
corresponding topological space is denoted $C_{\alpha }(X,Z)$.

In view of (\ref{eq:Oregular}) we can, and we will throughout the paper,
assume that each $\mathcal{A}\in \alpha $ is \emph{openly isotone}, that is, 
$\mathcal{A}=\mathcal{O}_{X}(\mathcal{A})$. The topology of pointwise
convergence is obtained when $\alpha $ is the topology $p(X):=\{\bigcup_{F%
\in \mathcal{F}}\mathcal{O}(F):\mathcal{F}\subseteq \lbrack X]^{<\infty }\}$
on $C(X,\$)$ of \emph{finitely generated families}, while the compact open
topology is obtained when $\alpha $ is the topology $k(X):=\{\bigcup_{K\in 
\mathcal{F}}\mathcal{O}(K):\mathcal{F}\subseteq \mathcal{K}(X)\}$ on $%
C(X,\$) $ (\footnote{%
Here, $\mathcal{K}(X)$ stands for the set of all compact subsets of $X$.})
of \emph{compactly generated families}. Of course, the Isbell topology is
obtained when $\alpha $ is the topology $\kappa (X)$ of compact families.

Even if $\alpha \subseteq C(X,\$)$ is not a basis for a topology, $\alpha
(X,Z)=\alpha ^{\cap }(X,Z)$, where $\alpha ^{\cap }$ is the collection of
finite intersections of elements of $\alpha $, because $\bigcap_{i=1}^{n}[%
\mathcal{A}_{i},U]=[\bigcap_{i=1}^{n}\mathcal{A}_{i},U]$. Therefore, \emph{%
we can assume that} $\alpha $ \emph{is a basis for} $\alpha (X,\$)$.

\begin{proposition}
\cite{D.pannonica} If $\alpha \subseteq C(X,\$)$ is non-degenerate then $%
\alpha (X,Z)=\alpha (X,\$)^{\Uparrow }$.
\end{proposition}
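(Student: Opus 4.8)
The plan is to show that the family-open topology $\alpha(X,Z)$, which is defined by the explicit subbase $\{[\mathcal{A},U]:\mathcal{A}\in\alpha,\,U\in C(Z,\$)\}$, coincides with the preimage-wise convergence $\alpha(X,\$)^{\Uparrow}$ obtained from the hyperspace topology $\alpha(X,\$)$. Since both are topologies on $C(X,Z)$, it suffices to compare their convergences, and by the definition of $\tau^{\Uparrow}$ in $(\ref{general-def})$ with $\tau=\alpha(X,\$)$, I would unwind what it means for $f\in\lim_{\alpha(X,\$)^{\Uparrow}}\mathcal{F}$: namely $f^{-}(U)\in\lim_{\alpha(X,\$)}\mathcal{F}^{-}(U)$ for every $U\in C(Z,\$)$. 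The key structural fact I would exploit is that $\alpha$ is itself a subbase of open sets for $\alpha(X,\$)$ (noted in the excerpt as a simplifying feature of the Sierpi\'{n}ski case), so convergence in $C_{\alpha}(X,\$)$ is governed directly by membership in the families $\mathcal{A}\in\alpha$.

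First I would record the crucial dictionary between the two subbases, which is essentially the identity $U_{\ast}(f)=f^{-}(U)$ from $(\ref{eq:adjoint})$ together with the definition $[\mathcal{A},U]=\{f:f^{-}(U)\in\mathcal{A}\}$. This says exactly that $f\in[\mathcal{A},U]$ if and only if $f^{-}(U)=U_{\ast}(f)\in\mathcal{A}$, i.e. $U_{\ast}(f)$ lies in the subbasic open set $\mathcal{A}$ of $C_{\alpha}(X,\$)$. Thus $[\mathcal{A},U]=U_{\ast}^{-1}(\mathcal{A})$, the preimage under $U_{\ast}$ of a subbasic open set of the hyperspace. The subbase $(\ref{funct:subbase})$ of $\alpha(X,Z)$ is therefore precisely the collection of preimages $U_{\ast}^{-1}(\mathcal{A})$ as $\mathcal{A}$ runs over the subbase $\alpha$ of $C_{\alpha}(X,\$)$ and $U$ over $C(Z,\$)$. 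But this is exactly the subbase that generates the initial topology making all the maps $U_{\ast}:C(X,Z)\to C_{\alpha}(X,\$)$ continuous, and by the discussion around $(\ref{eq:generalview})$ that initial topology is $\alpha(X,\$)^{\Uparrow}$ (using that $\alpha(X,\$)$ is topological, so the coarsest convergence making the $U_{\ast}$ continuous is the coarsest such topology). Matching subbases forces the two topologies to coincide.

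The main obstacle I anticipate is justifying that restricting attention to the subbase $\alpha$ of $\alpha(X,\$)$, rather than to all its open sets, genuinely captures the convergence $\alpha(X,\$)^{\Uparrow}$. In $(\ref{general-def})$ one tests $f^{-}(U)\in\lim_{\tau}\mathcal{F}^{-}(U)$, and a priori this demands control over all $\tau$-neighborhoods of $f^{-}(U)$; I would need to know that it suffices to test against the subbasic (or basic) open sets $\mathcal{A}\in\alpha$ containing $f^{-}(U)$. This is where the remark that \emph{we can assume $\alpha$ is a basis for $\alpha(X,\$)$} (via $\alpha(X,Z)=\alpha^{\cap}(X,Z)$ and the identity $\bigcap_{i=1}^{n}[\mathcal{A}_{i},U]=[\bigcap_{i=1}^{n}\mathcal{A}_{i},U]$) does the work: passing to finite intersections lets me treat $\alpha$ as a genuine neighborhood basis, so the subbasic test and the full neighborhood test agree. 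A subtle point worth checking explicitly is the role of the fixed $U$: the intersection identity only combines brackets sharing the same $U$, so I would verify that finite intersections across different open sets $U$ are handled correctly by the general initial-topology subbase, which indeed allows arbitrary finite intersections of $U_{\ast}^{-1}(\mathcal{A})$ over possibly distinct $U$. Once the subbase identification and this basis reduction are in place, the equality $\alpha(X,Z)=\alpha(X,\$)^{\Uparrow}$ follows immediately, and no delicate estimates remain.
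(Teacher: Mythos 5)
Your proposal is correct and takes essentially the same route as the paper: both hinge on the identity $U_{\ast }^{-1}(\mathcal{A})=[\mathcal{A},U]$ (valid because each $\mathcal{A}\in \alpha $ is assumed openly isotone), which identifies the subbase (\ref{funct:subbase}) of $\alpha (X,Z)$ with the preimages under the maps $U_{\ast }$ of the subbase $\alpha $ of $C_{\alpha }(X,\$)$, exhibiting $\alpha (X,Z)$ as the initial topology for the source $\left( U_{\ast }:C(X,Z)\rightarrow C_{\alpha }(X,\$)\right) _{U\in C(Z,\$)}$ and hence as $\alpha (X,\$)^{\Uparrow }$ by the discussion around (\ref{eq:generalview}) and topologicity of $\alpha (X,\$)$. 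Your additional worry about testing convergence only against subbasic sets is legitimate but already resolved by the standard fact that preimages of a subbase of the codomain generate the initial topology, so it introduces no gap.
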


\begin{proof}
If $\mathcal{A}\in \alpha $ and $U\in C(Z,\$)$ then 
\begin{equation*}
U_{\ast }^{-1}\left( \mathcal{A}\right) =\{f\in C(X,Z):f^{-}U\in \mathcal{A}%
\}=[\mathcal{A},U]
\end{equation*}%
because $\mathcal{A}=\mathcal{O}_{X}(\mathcal{A})$. Therefore $\alpha (X,Z)$
is indeed the initial topology for the family of maps $\left( U_{\ast
}:C(X,Z)\rightarrow C_{\alpha }(X,\$)\right) _{U\in C(Z,\$)}$.
\end{proof}

By definition, the \emph{natural convergence} $[X,Z]$ on $C(X,Z)$ (also
called continuous convergence, e.g., \cite{Binz}, \cite{BB.book}) is the
coarsest convergence making the canonical coupling (or evaluation) 
\begin{equation}
\langle \cdot ,\cdot \rangle :X\times C(X,Z)\rightarrow Z
\label{nat-coupling}
\end{equation}%
continuous (\footnote{$\langle x,f\rangle :=f(x)$}). In other words, $f\in
\lm_{\lbrack X,Z]}\mathcal{F}$ if and only if for every $x\in X$, the
filter $\langle \mathcal{N}(x),\mathcal{F}\rangle $ converges to $f(x)$ in $%
Z,$ that is, if $U\in \mathcal{O}_{Z}(f(x))$ there is $V\in \mathcal{O}%
_{X}(x)$ and $F\in \mathcal{F}$ such that $\left\langle V,F\right\rangle
\subseteq U,$ equivalently, $F\subseteq \lbrack V,U]$. Therefore

\begin{proposition}
$f_{0}\in \lm_{\lbrack X,Z]}\mathcal{F}$ if and only if for every
open subset $U$ of $Z$ and $x\in X,$ 
\begin{equation}
f_{0}\in \lbrack x,U]\Longrightarrow \exists _{V\in \mathcal{O}_{X}\left(
x\right) }\;[V,U]\in \mathcal{F},  \label{nat}
\end{equation}%
if and only if for every open subset $U$ of $Z$ and $x\in X,$%
\begin{equation}
x\in f_{0}^{-}(U)\Longrightarrow \exists _{F\in \mathcal{F}%
}\;\bigcap_{f\in F}f^{-}(U)\in \mathcal{O}_{X}\left( x\right) .
\label{eq:natcap}
\end{equation}
\end{proposition}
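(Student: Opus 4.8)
The plan is to prove the Proposition by showing that each of the three characterizations follows from the one before it, working directly from the definition of the natural convergence $[X,Z]$ as the coarsest convergence making the evaluation coupling jointly continuous. The text immediately preceding the statement already establishes the crucial intermediate fact: $f_0 \in \lm_{[X,Z]}\mathcal{F}$ if and only if for every $x \in X$ the image filter $\langle \mathcal{N}(x),\mathcal{F}\rangle$ converges to $f_0(x)$ in $Z$, which unwinds to say that for every $U \in \mathcal{O}_Z(f_0(x))$ there exist $V \in \mathcal{O}_X(x)$ and $F \in \mathcal{F}$ with $\langle V,F\rangle \subseteq U$. So the real work is purely a matter of rewriting this condition in the two equivalent forms displayed in the statement.

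First I would derive characterization (\ref{nat}). The defining condition ranges over pairs $(x,U)$ with $U \in \mathcal{O}_Z(f_0(x))$, i.e. with $f_0(x) \in U$, which is exactly the hypothesis $f_0 \in [x,U]$ since $[x,U] = \{g : g(x) \in U\}$. The conclusion $\langle V,F\rangle \subseteq U$ means $f(v) \in U$ for all $v \in V$ and all $f \in F$, which says precisely $F \subseteq [V,U]$, and since $\mathcal{F}$ is a filter, the existence of such an $F \in \mathcal{F}$ is equivalent to $[V,U] \in \mathcal{F}$ (here one uses that $\mathcal{F}$ is isotone, so membership of a larger set follows from containment of a smaller one). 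Thus the defining condition transcribes verbatim into (\ref{nat}): for every open $U$ and every $x$, if $f_0 \in [x,U]$ then there is $V \in \mathcal{O}_X(x)$ with $[V,U] \in \mathcal{F}$. This step is essentially bookkeeping with the bracket notation $[D,U]$ introduced earlier.

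Next I would pass from (\ref{nat}) to (\ref{eq:natcap}). The hypothesis $f_0 \in [x,U]$ is the same as $x \in f_0^{-}(U)$, matching the hypothesis of (\ref{eq:natcap}). For the conclusion, I would unwind $[V,U] \in \mathcal{F}$ using the definition of the bracket applied to a set $F$ of maps: an $F$ satisfies $F \subseteq [V,U]$ exactly when every $f \in F$ maps $V$ into $U$, i.e. when $V \subseteq \bigcap_{f \in F} f^{-}(U)$. Given such $V \in \mathcal{O}_X(x)$ and $F$, the intersection $\bigcap_{f\in F} f^{-}(U)$ contains the open neighborhood $V$ of $x$, hence belongs to $\mathcal{O}_X(x)$, which is the conclusion of (\ref{eq:natcap}). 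Conversely, if $\bigcap_{f\in F} f^{-}(U) \in \mathcal{O}_X(x)$ for some $F \in \mathcal{F}$, then taking $V := \bigcap_{f\in F} f^{-}(U)$ itself gives an open neighborhood of $x$ with $F \subseteq [V,U]$, so $[V,U] \in \mathcal{F}$, recovering (\ref{nat}).

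I do not anticipate a genuine obstacle here, since the statement is a direct reformulation rather than a theorem with hidden content. The one point requiring minor care is the quantifier interplay in the backward direction of each equivalence: I must make sure the witnessing $V$ (respectively $F$) produced from one formulation is legitimately of the form required by the other, and in particular that using the intersection $\bigcap_{f\in F} f^{-}(U)$ as the neighborhood $V$ is permissible because it is open (a finite or even arbitrary intersection need not be open in general, but here it is forced open precisely by the hypothesis that it lies in $\mathcal{O}_X(x)$, or alternatively $V$ is already supplied as open in the other direction). Once that is checked, both equivalences hold and the chain of characterizations is complete.
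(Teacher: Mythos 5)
Your route is the paper's own: the proposition carries no separate proof there, being the direct transcription (``Therefore'') of the unwinding of joint continuity of the evaluation given immediately before the statement, and your first equivalence --- the definition versus (\ref{nat}) --- is handled correctly, including the isotony point that $[V,U]\in \mathcal{F}$ if and only if some $F\in \mathcal{F}$ satisfies $F\subseteq [V,U]$.

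There is, however, a genuine slip in your passage from (\ref{nat}) to (\ref{eq:natcap}). The paper defines $\mathcal{O}_{X}(A)$ as the collection of \emph{open} subsets of $X$ containing $A$, so the conclusion of (\ref{eq:natcap}), read literally, asserts that $\bigcap_{f\in F}f^{-}(U)$ is itself open. Your inference ``the intersection contains the open neighborhood $V$ of $x$, hence belongs to $\mathcal{O}_{X}(x)$'' is invalid under that definition: a superset of an open neighborhood of $x$ is a neighborhood of $x$ but need not be open. Indeed, with the natural witness $F=[V,U]$ the intersection $\bigcap_{f\in [V,U]}f^{-}(U)$ is the saturation of $V$, which can properly contain $V$ and fail to be open: for $X=\mathbb{Q}$, $Z=\{0,1\}$ discrete and $U=\{1\}$, continuous maps correspond to clopen sets, and the saturation of $V=(0,1)\cap \mathbb{Q}$ is $\bigcap \{C \text{ clopen}:C\supseteq V\}=[0,1]\cap \mathbb{Q}$, not open in $\mathbb{Q}$. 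Your closing caveat paragraph only repairs the converse direction, where openness is part of the hypothesis. The clean repair is to read the conclusion of (\ref{eq:natcap}) as $x\in \mathrm{int}_{X}\bigcap_{f\in F}f^{-}(U)$ --- exactly the form in which the paper itself uses (\ref{eq:natcap}) in the very next proof, that $[X,Z]=[X,\$]^{\Uparrow }$, and in the specialization (\ref{crit-inh}); with that reading your argument is complete, since $V\subseteq \bigcap_{f\in [V,U]}f^{-}(U)$ gives interior membership, and conversely the interior serves as the $V$ of (\ref{nat}). Note that in the hyperspace case $Z=\$$ the literal statement can be recovered by enlarging $F$ to the set of all open supersets of that interior, whose preimage-intersection is exactly this open set; for general $Z$ no such enlargement is available, so the literal form should not be deduced from (\ref{nat}) the way you do.
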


In the case where $Z=\$$, the only non-trivial open subset of $Z$ is $\{1\}$
and elements of $C(X,\$)$ are of the form $\chi _{Y}$ for $Y$ open in $X$.
Therefore $\left( \ref{eq:natcap}\right) $ translates into: $Y\in
\lm_{\lbrack X,\$]}\gamma $ if and only if 
\begin{equation*}
x\in Y\Longrightarrow \exists _{\mathcal{G}\in \gamma
}\;\bigcap_{G\in \mathcal{G}}G\in \mathcal{O}_{X}\left( x\right) ,
\end{equation*}%
In other words, $Y\in \lm_{\lbrack X,\$]}\gamma $ if and only if%
\begin{equation}
Y\subseteq \bigcup_{\mathcal{G}\in \gamma }\mathrm{int}%
_{X}\bigcap_{G\in \mathcal{G}}G.  \label{crit-inh}
\end{equation}

This convergence is often (e.g., \cite{contlattices}) known as the \emph{%
Scott convergence} (in the lattice of open subsets of $X$ ordered by
inclusion). Its homeomorphic image $c[X,\$]$ on the set of closed subsets of 
$X$ is known as \emph{upper Kuratowski convergence} (\footnote{%
Explicitely, if $C$ is a closed subset of $X$ and $\gamma $ is a filter on $%
cC(X,\$)$ then $C\in \lm_{c[X,\$]}\gamma $ if and only if $\bigcap_{%
\mathcal{G}\in \gamma }\mathrm{cl}_{X}\left( \bigcup_{F\in \mathcal{G}%
}F\right) \subseteq C$, that is, $\mathrm{adh}_{X}|\gamma |\subseteq C$
where $|\gamma |:=\left\{ \bigcup_{F\in \mathcal{G}}F:\mathcal{G}\in \gamma
\right\} .$}).

\begin{proposition}
(e.g., \cite{schwarz.powers}) 
\begin{equation*}
\lbrack X,Z]=[X,\$]^{\Uparrow }.
\end{equation*}
\end{proposition}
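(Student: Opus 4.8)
The plan is to show that the two convergences assign the same limit to every filter, which is exactly what equality of convergences means. Fix a filter $\mathcal{F}$ on $C(X,Z)$ and $f_{0}\in C(X,Z)$. I would compare the characterization (\ref{eq:natcap}) of the natural convergence with the characterization of the Scott convergence $[X,\$]$ (the displayed criterion preceding (\ref{crit-inh})), threaded through the definition (\ref{general-def}) of the $\Uparrow$ operation. Since both sides of the asserted equality are convergences on $C(X,Z)$, no question of topologicity arises and it suffices to match the two membership conditions $f_{0}\in\lm_{[X,Z]}\mathcal{F}$ and $f_{0}\in\lm_{[X,\$]^{\Uparrow}}\mathcal{F}$.

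First I would unwind the right-hand side. By (\ref{general-def}) with $\tau=[X,\$]$, we have $f_{0}\in\lm_{[X,\$]^{\Uparrow}}\mathcal{F}$ if and only if $f_{0}^{-}(U)\in\lm_{[X,\$]}\mathcal{F}^{-}(U)$ for every open $U\subseteq Z$. For each such $U$, I would apply the Scott-convergence criterion to the open set $Y=f_{0}^{-}(U)$ and the filter generated by $\gamma=\mathcal{F}^{-}(U)$ on $C(X,\$)$: it reads $x\in f_{0}^{-}(U)\Rightarrow\exists_{\mathcal{G}\in\gamma}\;\bigcap_{G\in\mathcal{G}}G\in\mathcal{O}_{X}(x)$ for every $x\in X$. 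Note that the outer quantifier over $U$ ranges on both sides over exactly the open subsets of $Z$ (equivalently $C(Z,\$)$), so the two families of conditions are indexed identically and I may match them $U$ by $U$.

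The one point that needs (minor) care is that here the existential ranges over members $\mathcal{G}$ of the filter generated by $\mathcal{F}^{-}(U)$, whereas (\ref{eq:natcap}) quantifies over $F\in\mathcal{F}$, i.e. over the canonical base $\{F^{-}(U):F\in\mathcal{F}\}$. These two existentials agree by monotonicity: every filter member $\mathcal{G}$ contains some base element $F^{-}(U)$, and a larger family of open sets has a smaller intersection, so $\bigcap_{G\in\mathcal{G}}G\subseteq\bigcap_{f\in F}f^{-}(U)$; thus if the former is a neighborhood of $x$ then so is the latter, while conversely each base element is itself a filter member. After this reduction the criterion for the pair $(f_{0}^{-}(U),\mathcal{F}^{-}(U))$ becomes $x\in f_{0}^{-}(U)\Rightarrow\exists_{F\in\mathcal{F}}\;\bigcap_{f\in F}f^{-}(U)\in\mathcal{O}_{X}(x)$, which is precisely the clause appearing in (\ref{eq:natcap}). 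Quantifying over all open $U\subseteq Z$ and all $x\in X$ then identifies $f_{0}\in\lm_{[X,\$]^{\Uparrow}}\mathcal{F}$ with $f_{0}\in\lm_{[X,Z]}\mathcal{F}$, establishing the equality. I expect this base-reduction to be the only genuine content, the rest being bookkeeping against the already-established characterizations.
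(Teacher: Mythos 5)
Your proposal is correct and takes essentially the same route as the paper's own proof: the paper likewise unwinds $[X,\$]^{\Uparrow}$ via (\ref{general-def}) and matches the characterization (\ref{eq:natcap}) of $[X,Z]$ against the Scott-convergence criterion (\ref{crit-inh}), writing the identification through the maps $U_{\ast}$. Your explicit reduction between the canonical base $\left\{ F^{-}(U):F\in \mathcal{F}\right\} $ and the generated filter merely spells out a monotonicity step the paper leaves implicit in its convention on convergence of filter bases.
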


\begin{proof}
In view of $\left( \ref{eq:natcap}\right) $, $f_{0}\in
\lm_{\lbrack X,Z]}\mathcal{F}$ if and only if $f_{0}^{-}\left(
U\right) \subseteq \bigcup_{F\in \mathcal{F}}\mathrm{int}%
_{X}\bigcap_{f\in F}f^{-}\left( U\right) $, equivalently,%
\begin{equation*}
U_{\ast }(f_{0})\subseteq \bigcup_{\mathcal{G}\in U_{\ast }(%
\mathcal{F})}\mathrm{int}_{X}\bigcap_{G\in \mathcal{G}}G,
\end{equation*}%
for every open subset $U$ of $Z$. In view of $\left( \ref{crit-inh}\right) $%
, we conclude that $f_{0}\in \lm_{\lbrack X,Z]}\mathcal{F}$ if and
only if $U_{\ast }(f_{0})\in \lm_{\lbrack X,\$]}U_{\ast }(\mathcal{F})$ for
every $U\in C(Z,\$)$, which concludes the proof.
\end{proof}

It follows that if $\tau \leq \lbrack X,\$]$ then $\tau ^{\Uparrow }\leq
\lbrack X,Z]$. In other words, the preimage-wise convergence of a splitting
convergence is splitting.

That the natural convergence is not in general topological is a classical
fact and one of the main motivation to consider convergence spaces. It is
well known (see, e.g., \cite{schwarz.powers}, \cite{DGL.kur}) that the
topological reflection $T\left[ X,\$\right] $ of $\left[ X,\$\right] $ is
equal to the \emph{Scott topology }$\kappa \left( X,\$\right) $ and we have
seen that $\kappa \left( X,\$\right) =\kappa (X)$, the collection of all
compact openly isotone families on $X$.

We do not know if for every $X$ there exists a hyperconvergence $\tau $ on $%
C\left( X,\$\right) $ such that $T\left[ X,\mathbb{R}\right] =\tau
^{\Uparrow }$.

\section{Hyperconvergences}

We focus on convergences $\tau $ on $C(X,\$)$ that share basic properties
with $[X,\$]$ and topologies of the type $\alpha (X,\$)$ (\footnote{%
We do not treat here \emph{hit-and-miss} convergences, like the \emph{%
Vietoris topology} or \emph{Fell topology}.}). In particular, we say that $%
\tau $ is \emph{lower} if 
\begin{equation*}
A\subseteq B\in \lm_{\tau }\gamma \Longrightarrow A\in
\lm_{\tau }\gamma ,
\end{equation*}%
and \emph{upper regular} if 
\begin{equation*}
O\in \lm_{\tau }\gamma \Longrightarrow O\in \lm_{\tau }%
\mathcal{O}_{X}^{\natural }(\gamma ),
\end{equation*}%
where $\mathcal{O}_{X}^{\natural }(\gamma )$ is generated by $\{\mathcal{O}%
_{X}(\mathcal{G)}:\mathcal{G}\in \gamma \}$. Observe that if $O_{0},O_{1}$
are open subsets of $Z$ and $\mathcal{F}$ is a filter on $C(X,Z)$ then $%
\mathcal{O}_{X}^{\natural }\left( \mathcal{F}^{-}(O_{0})\right) \leq 
\mathcal{O}_{X}^{\natural }\left( \mathcal{F}^{-}(O_{1})\right) $ whenever $%
O_{0}\subseteq O_{1}$ (\footnote{%
Indeed, if $O_{0}\subset O_{1}$ then $f^{-}(O_{0})\subset f^{-}(O_{1})$,
hence $\bigcup_{f\in F}\left\{ P\in C(X,\$):P\supset
f^{-}(O_{0})\right\} \supset \bigcup_{f\in F}\left\{ P\in
C(X,\$):P\supset f^{-}(O_{1})\right\} .$}). When considering upper regular
convergences, we will often identify a filter $\gamma $ on $C(X,\$)$ and its
upper regularization $\mathcal{O}_{X}^{\natural }(\gamma )$. With this
convention, the previous observation becomes 
\begin{equation}
O_{0}\subseteq O_{1}\Longrightarrow \mathcal{F}^{-}(O_{0})\leq \mathcal{F}%
^{-}(O_{1}).  \label{eq:monotony2}
\end{equation}

\begin{proposition}
\label{pro:up-regular} Each lower topology on $C(X,\$)$ is upper regular.
\end{proposition}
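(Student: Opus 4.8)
The plan is to reduce upper regularity to a purely order-theoretic feature of the open sets of a lower topology, namely that they are \emph{isotone} for inclusion. Since $\tau$ is a topology, $O\in\lm_{\tau}\gamma$ holds precisely when $\gamma$ contains the $\tau$-neighborhood filter $\mathcal{N}_{\tau}(O)$ of $O$, equivalently when every $\tau$-open $\mathcal{W}\ni O$ belongs to $\gamma$. Thus the target implication $O\in\lm_{\tau}\gamma\Rightarrow O\in\lm_{\tau}\mathcal{O}_{X}^{\natural}(\gamma)$ amounts to showing that each such $\mathcal{W}$ also lies in $\mathcal{O}_{X}^{\natural}(\gamma)$.

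First I would show that in a lower topology every $\tau$-open set $\mathcal{W}$ is isotone, that is, $A\in\mathcal{W}$ and $A\subseteq B$ force $B\in\mathcal{W}$, where $A,B$ are open subsets of $X$, i.e. points of $C(X,\$)$. To see this, test the lowerness condition against $\gamma=\mathcal{N}_{\tau}(B)$: since $B\in\lm_{\tau}\mathcal{N}_{\tau}(B)$ and $A\subseteq B$, lowerness yields $A\in\lm_{\tau}\mathcal{N}_{\tau}(B)$, whence $\mathcal{N}_{\tau}(A)\subseteq\mathcal{N}_{\tau}(B)$. As $\mathcal{W}$ is a $\tau$-open set containing $A$, it lies in $\mathcal{N}_{\tau}(A)\subseteq\mathcal{N}_{\tau}(B)$, and since $\mathcal{W}$ is open its membership in $\mathcal{N}_{\tau}(B)$ forces $B\in\mathcal{W}$.

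With isotonicity in hand the conclusion is immediate. Fix $O\in\lm_{\tau}\gamma$ and any $\tau$-open $\mathcal{W}\ni O$; then $\mathcal{W}\in\gamma$ by the preceding remark. Because $\mathcal{W}$ is isotone, $\mathcal{O}_{X}(\mathcal{W})=\bigcup_{G\in\mathcal{W}}\mathcal{O}_{X}(G)\subseteq\mathcal{W}$, and the reverse inclusion always holds, so $\mathcal{O}_{X}(\mathcal{W})=\mathcal{W}$. Hence $\mathcal{W}$ is one of the generators $\mathcal{O}_{X}(\mathcal{G})$, $\mathcal{G}\in\gamma$, of $\mathcal{O}_{X}^{\natural}(\gamma)$, so $\mathcal{W}\in\mathcal{O}_{X}^{\natural}(\gamma)$. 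As $\mathcal{W}$ ranged over all $\tau$-open sets containing $O$, this gives $\mathcal{N}_{\tau}(O)\subseteq\mathcal{O}_{X}^{\natural}(\gamma)$, i.e. $O\in\lm_{\tau}\mathcal{O}_{X}^{\natural}(\gamma)$.

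The point demanding care — the would-be obstacle — is that $\mathcal{O}_{X}^{\natural}(\gamma)$ is \emph{coarser} than $\gamma$, since each generator $\mathcal{O}_{X}(\mathcal{G})$ contains $\mathcal{G}$; so the generic monotonicity of convergence under filter refinement runs the wrong way and is useless here. Lowerness must be exploited genuinely, and it is exactly what makes the witnessing filter $\mathcal{W}$ stable under the passage from $\gamma$ to $\mathcal{O}_{X}^{\natural}(\gamma)$. I should also keep in mind that both the reduction to neighborhood filters and the step that an open member of $\mathcal{N}_{\tau}(B)$ contains $B$ rely on $\tau$ being a genuine topology.
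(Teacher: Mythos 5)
Your proof is correct and takes essentially the same route as the paper: both arguments reduce upper regularity to the key fact that every $\tau$-open subset of $C(X,\$)$ is isotone, the paper certifying this by applying lowerness to the principal ultrafilter $A^{\bullet}$ of the larger set where you use its neighborhood filter $\mathcal{N}_{\tau}(B)$ --- an interchangeable choice. You merely spell out the reduction step (that an isotone open $\mathcal{W}\in\gamma$ equals $\mathcal{O}_{X}(\mathcal{W})$ and hence lies in $\mathcal{O}_{X}^{\natural}(\gamma)$, so $\mathcal{N}_{\tau}(O)\subseteq\mathcal{O}_{X}^{\natural}(\gamma)$), which the paper leaves implicit in its opening ``it is enough to show'' sentence.
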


\begin{proof}
It is enough to show that if $\mathcal{G}\subseteq C(X,\$)$ is open then $%
\mathcal{G}=\mathcal{O}_{X}^{\natural }(\mathcal{G})$. Let $A\supseteq G\in 
\mathcal{G}$. Then the principal ultrafilter $A^{\bullet }$ of $A$ converges
to $A$ and therefore to $G$, because the topology is lower. Because $%
\mathcal{G}$ is open, $\mathcal{G}\in $ $A^{\bullet }$ so that $A\in 
\mathcal{G}$. Hence $\mathcal{G}=\mathcal{O}_{X}^{\natural }(\mathcal{G})$.
\end{proof}

\begin{lemma}
\label{lem:closure} If $X\neq \varnothing $ and $p(X,\$)\leq \tau \leq \left[
X,\$\right] $, then 
\begin{equation*}
\mathrm{cl}_{\tau }\left\{ A\right\} =\{O\in C(X,\$):O\subseteq A\}
\end{equation*}%
for each $A\in C(X,\$)$.
\end{lemma}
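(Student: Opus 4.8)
The plan is to use the order sandwich $p(X,\$)\le \tau \le [X,\$]$ together with the fact that the closure of a singleton is antitone in the convergence order, reducing everything to two explicit computations at the extremes. First I would record that the only proper filter meshing with (equivalently, containing) the singleton $\{A\}$ is the principal ultrafilter $A^{\bullet}$: indeed $A^{\bullet}$ is an ultrafilter, and any filter meshing with $\{A\}$ can be refined to one containing $\{A\}$, hence to $A^{\bullet}$; since finer filters still converge, this gives
\[
\mathrm{cl}_{\theta}\{A\}=\{O\in C(X,\$):O\in \lm_{\theta}A^{\bullet}\}
\]
for every convergence $\theta$ on $C(X,\$)$. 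Because a coarser convergence has more convergent filters, $\sigma\le\theta$ forces $\lm_{\theta}A^{\bullet}\subseteq\lm_{\sigma}A^{\bullet}$, so singleton closure is antitone and
\[
\mathrm{cl}_{[X,\$]}\{A\}\subseteq \mathrm{cl}_{\tau}\{A\}\subseteq \mathrm{cl}_{p(X,\$)}\{A\}.
\]
It then suffices to show that both outer terms equal $\{O:O\subseteq A\}$.

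For the lower (pointwise) end, I would use that $p(X,\$)$ is the topology with basis $\{\mathcal{O}_{X}(F):F\in[X]^{<\infty}\}$, so a basic neighborhood of $O$ is $\mathcal{O}_{X}(F)$ with $F\subseteq O$ finite; as $p(X,\$)$ is topological, $O\in \mathrm{cl}_{p(X,\$)}\{A\}$ if and only if every such neighborhood contains $A$, that is, $F\subseteq A$ for all finite $F\subseteq O$. Testing singletons $F=\{x\}$ with $x\in O$ shows this amounts exactly to $O\subseteq A$, giving $\mathrm{cl}_{p(X,\$)}\{A\}=\{O:O\subseteq A\}$.

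For the upper (natural) end, I would feed $\gamma=A^{\bullet}$ into the criterion $(\ref{crit-inh})$. A member $\mathcal{G}\in A^{\bullet}$ is a family of open sets with $A\in\mathcal{G}$, so $\bigcap_{G\in\mathcal{G}}G\subseteq A$ and hence $\mathrm{int}_{X}\bigcap_{G\in\mathcal{G}}G\subseteq A$ for every such $\mathcal{G}$; moreover the member $\{A\}\in A^{\bullet}$ realizes the bound, since $\mathrm{int}_{X}A=A$ as $A$ is open. Thus $\bigcup_{\mathcal{G}\in A^{\bullet}}\mathrm{int}_{X}\bigcap_{G\in\mathcal{G}}G=A$, and $(\ref{crit-inh})$ yields $O\in\lm_{[X,\$]}A^{\bullet}$ if and only if $O\subseteq A$, i.e. $\mathrm{cl}_{[X,\$]}\{A\}=\{O:O\subseteq A\}$. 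Combining the two computations with the displayed sandwich forces $\mathrm{cl}_{\tau}\{A\}=\{O:O\subseteq A\}$, as claimed.

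I do not expect a serious obstacle: the content lies entirely in the two boundary computations, which are routine once the antitonicity of singleton closure is in place. The only points needing care are the reduction of $\mathrm{cl}_{\theta}\{A\}$ to a statement about the single filter $A^{\bullet}$ (so that the order comparison transfers cleanly to closures), and reading the members of $A^{\bullet}$ correctly as collections of open sets containing $A$ when substituting into $(\ref{crit-inh})$. The hypothesis $X\neq\varnothing$ serves only to keep $p(X,\$)$ non-degenerate, so that the lower bound in the sandwich is genuinely available.
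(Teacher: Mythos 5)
Your proof is correct and takes essentially the same route as the paper: both arguments sandwich $\mathrm{cl}_{\tau }\left\{ A\right\} $ between the closures for $p(X,\$)$ and $\left[ X,\$\right] $ using antitonicity, and then settle the two extremes, with the pointwise-end computation identical. Your only cosmetic deviation is that at the upper end you verify $\lm_{\left[ X,\$\right] }A^{\bullet }=\{O\in C(X,\$):O\subseteq A\}$ directly from (\ref{crit-inh}), where the paper instead cites that $\left[ X,\$\right] $ is lower --- the same fact, itself established from that criterion.
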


\begin{proof}
To see that $\mathrm{cl}_{\tau }\left\{ A\right\} =\{O\in
C(X,\$):O\subseteq A\}$, note first that%
\begin{equation*}
\{O\in C(X,\$):O\subseteq A\}\subseteq \mathrm{cl}_{[X,\$]}\left\{
A\right\} \subseteq \mathrm{cl}_{\tau }\left\{ A\right\} \subseteq 
\mathrm{cl}_{p(X,\$)}\left\{ A\right\} ,
\end{equation*}%
where the first inclusion follows from the fact that $[X,\$]$ is lower, and
the others from the assumption $p(X,\$)\leq \tau \leq \left[ X,\$\right] $.
Moreover, if $O\in \mathrm{cl}_{p(X,\$)}\left\{ A\right\} $ then
every $p(X,\$)$-open neighborhood of $O$ contains $A$. In particular, $A\in 
\mathcal{O}_{X}(x)$ for each $x\in O$, so that $O\subseteq A$.
\end{proof}

\begin{proposition}
If $X\neq \varnothing $ and $p(X,\$)\leq \tau \leq \left[ X,\$\right] $,
then $\tau $ is $T_{0}$ but is not $T_{1}$
\end{proposition}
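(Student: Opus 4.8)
The plan is to read off both separation properties directly from the explicit description of the closures of singletons furnished by \lemref{lem:closure}. For a convergence space, being $T_{0}$ means that the closures of distinct points differ, while being $T_{1}$ means that every singleton equals its own closure. Since \lemref{lem:closure} identifies $\mathrm{cl}_{\tau }\{A\}$ with the down-set $\{O\in C(X,\$):O\subseteq A\}$ of $A$ in the inclusion order, both assertions reduce to elementary order-theoretic facts about this down-set.

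First I would verify $T_{0}$. Suppose $A,B\in C(X,\$)$ satisfy $\mathrm{cl}_{\tau }\{A\}=\mathrm{cl}_{\tau }\{B\}$. Since $A\subseteq A$, \lemref{lem:closure} gives $A\in \mathrm{cl}_{\tau }\{A\}=\mathrm{cl}_{\tau }\{B\}$, whence $A\subseteq B$; by symmetry $B\subseteq A$, so $A=B$. Thus distinct elements of the hyperspace have distinct $\tau $-closures, which is exactly the $T_{0}$ axiom. Equivalently, $A$ is the largest member of $\mathrm{cl}_{\tau }\{A\}$, so the closure recovers the point.

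To refute $T_{1}$ I would exhibit a non-closed singleton. Because $X\neq \varnothing $, the hyperspace $C(X,\$)$ contains the two distinct open sets $\varnothing $ and $X$. Applying \lemref{lem:closure} to $A=X$ yields $\mathrm{cl}_{\tau }\{X\}=\{O\in C(X,\$):O\subseteq X\}=C(X,\$)$, which contains $\varnothing \neq X$. Hence $\{X\}$ is strictly smaller than its closure, so $\tau $ is not $T_{1}$; any nonempty open set would serve equally well, since $\varnothing $ lies in the closure of each of its singletons. I expect no genuine obstacle here: once \lemref{lem:closure} is in hand the argument is immediate, and the only point needing attention is the routine reconciliation of the convergence-theoretic definitions of $T_{0}$ and $T_{1}$ with the closure-of-points formulation used above, which holds because the closure of a point encodes precisely the specialization preorder.
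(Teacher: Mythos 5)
Your proof is correct and follows essentially the same route as the paper: both read the proposition directly off \lemref{lem:closure}, using $\varnothing \in \mathrm{cl}_{\tau }\{X\}$ (with $X\neq \varnothing $) to refute $T_{1}$. The only cosmetic difference is in the $T_{0}$ step, where you show distinct points have distinct closures via mutual inclusion, while the paper exhibits $\mathrm{cl}_{\tau }\{A_{0}\}$ as a closed set containing $A_{0}$ but not $A_{1}$ --- trivially equivalent formulations of the same specialization-order argument.
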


\begin{proof}
By Lemma \ref{lem:closure}, if $A_{1}\neq A_{0}$, say, there is $x\in
A_{1}\setminus A_{0}$, then $\mathrm{cl}_{\tau }\left\{
A_{0}\right\} =\{O\in C(X,\$):O\subseteq A_{0}\}$ is $\tau $-closed and
contains $A_{0}$ but not $A_{1}$ and the convergence is therefore $T_{0}$.
As $X\neq \varnothing $ and $\varnothing \in \mathrm{cl}_{\tau
}\left\{ X\right\} $, the convergence $\tau $ is not $T_{1}$.
\end{proof}

We say that a convergence $\tau $ on $C(X,\$)$ \emph{respects directed sups }%
if whenever $\{\gamma _{i}:i\in I\}$ and $\{B_{i}:i\in I\}$ are two directed
families of filters on $C(X,\$)$ and elements of $C(X,\$)$ respectively,
such that $B_{i}\in \lm_{\tau }\gamma _{i}$ for each $i$ in $I$, we have
that $\bigcup_{i\in I}B_{i}\in \lm_{\tau }\bigvee_{i\in I}\gamma _{i}$. A
compact, lower, upper regular pseudotopology $\tau $ on $C(X,\$)$ that
respects directed sups is called a \emph{solid} \emph{hyperconvergence} (%
\footnote{%
Notions of \emph{upper convergence}, \emph{lower} \emph{regularity} and
respecting directed sups for a convergence on $cC(X,\$)$ are defined dually,
and a compact lower regular upper pseudotopology on $cC(X,\$)$ that respects
directed sups is also called \emph{solid} \emph{hyperconvergence}.}).

Note that in a solid hyperconvergence, every filter converges. Indeed, every
ultrafilter is convergent by compactness, so that every ultrafilter
converges to $\varnothing $ because the convergence is lower. As the
convergence is pseudotopological, \emph{every filter converges to} $%
\varnothing $ \emph{in a solid hyperconvergence.}

\begin{proposition}
$[X,\$]$ and $\alpha (X,\$)$ are solid hyperconvergences provided that $%
\alpha \subseteq \kappa (X)$.
\end{proposition}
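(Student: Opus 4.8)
The plan is to verify, for each of the two convergences, the five attributes in the definition of a solid hyperconvergence: that it is a pseudotopology, compact, lower, upper regular, and respects directed sups. I would dispose of $\alpha(X,\$)$ (with $\alpha\subseteq\kappa(X)$ non-degenerate) first, since it is a genuine topology and most properties come cheaply. As a topology it is automatically a \emph{pseudotopology}. It is \emph{lower}: if $A\subseteq B$ are open and $\mathcal{A}\in\alpha$ is a basic neighborhood of $A$ (so $A\in\mathcal{A}$), open isotonicity of $\mathcal{A}$ forces $B\in\mathcal{A}$, whence $\mathcal{N}(A)\subseteq\mathcal{N}(B)$ and $B\in\lm_{\alpha(X,\$)}\gamma$ implies $A\in\lm_{\alpha(X,\$)}\gamma$. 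Being a lower topology, it is \emph{upper regular} by Proposition~\ref{pro:up-regular}. It is \emph{compact} because any openly isotone family containing $\varnothing$ must be all of $C(X,\$)$, so $\varnothing$ has only the whole space as a neighborhood and every filter, in particular every ultrafilter, converges to $\varnothing$.

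The one property genuinely needing $\alpha\subseteq\kappa(X)$ is that $\alpha(X,\$)$ \emph{respects directed sups}, and this is the crux for this case. Given directed families $\{\gamma_i\}$ and $\{B_i\}$ with $B_i\in\lm_{\alpha(X,\$)}\gamma_i$, I would take a basic neighborhood $\mathcal{A}\in\alpha$ of $\bigcup_i B_i$, so that $\bigcup_i B_i\in\mathcal{A}$. Applying the defining compactness of the family $\mathcal{A}$ to the collection $\{B_i\}$ yields a finite subcollection whose union lies in $\mathcal{A}$; directedness of $\{B_i\}$ then produces a single index $i_0$ with $B_{i_0}$ above that union, and open isotonicity gives $B_{i_0}\in\mathcal{A}$. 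Since $\mathcal{A}$ is then a neighborhood of $B_{i_0}$ and $B_{i_0}\in\lm_{\alpha(X,\$)}\gamma_{i_0}$, we get $\mathcal{A}\in\gamma_{i_0}\subseteq\bigvee_i\gamma_i$, as required.

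For $[X,\$]$ the natural tool is the explicit criterion $(\ref{crit-inh})$. \emph{Lower} is immediate, since $Y\subseteq Y'$ and $Y'\subseteq\bigcup_{\mathcal{G}\in\gamma}\mathrm{int}_{X}\bigcap_{G\in\mathcal{G}}G$ force the same containment for $Y$; \emph{compact} follows at once because $\varnothing$ is a limit of every filter. For \emph{upper regularity} I would check that $\bigcap_{H\in\mathcal{O}_{X}(\mathcal{G})}H=\bigcap_{G\in\mathcal{G}}G$ (each $G\in\mathcal{G}$ lies in $\mathcal{O}_{X}(\mathcal{G})$, and every member of $\mathcal{O}_{X}(\mathcal{G})$ contains some $G$), so the generating members $\mathcal{O}_{X}(\mathcal{G})$ of $\mathcal{O}_{X}^{\natural}(\gamma)$ contribute exactly the same interiors as $\gamma$ and $(\ref{crit-inh})$ is preserved verbatim. \emph{Respecting directed sups} is equally direct: each $\gamma_i$ is a subfamily of $\bigvee_i\gamma_i$, so the right-hand side of $(\ref{crit-inh})$ computed for $\bigvee_i\gamma_i$ dominates the one for each $\gamma_i$, whence $\bigcup_i B_i$ is contained in it.

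The main obstacle is showing that $[X,\$]$ is a \emph{pseudotopology}. I would argue by contraposition: if $Y\notin\lm_{[X,\$]}\gamma$, fix $y\in Y$ with $\mathcal{G}\not\subseteq\mathcal{O}_{X}(V)$ for every $\mathcal{G}\in\gamma$ and every open $V\ni y$. Calling a family \emph{good} when it is contained in some such $\mathcal{O}_{X}(V)$, one checks that $\gamma\cup\{\,\mathcal{O}_{X}(V)^{c}:V\in\mathcal{O}_{X}(y)\,\}$ has the finite intersection property: directedness of the $\mathcal{O}_{X}(V)$ in $V$ reduces the check to a single $V$, and the failure of the relevant member of $\gamma$ to be good supplies a member witnessing nonempty intersection with $\mathcal{O}_{X}(V)^{c}$. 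Extending to an ultrafilter $\mathcal{U}\geq\gamma$ that omits every good set then gives, through $(\ref{crit-inh})$, an ultrafilter with $Y\notin\lm_{[X,\$]}\mathcal{U}$, which establishes pseudotopologicity. Alternatively, one may simply invoke the classical fact that the continuous convergence is always a pseudotopology.
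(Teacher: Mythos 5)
Your proof is correct, and its overall skeleton --- verifying the five defining attributes one by one, with the compactness of each family $\mathcal{A}\in\kappa(X)$ carrying the directed-sups property for $\alpha(X,\$)$ exactly as in the paper (finite subcollection, directedness of $\{B_i\}$, open isotonicity, then $\mathcal{A}\in\gamma_{i_0}\leq\bigvee_i\gamma_i$) --- matches the paper's proof. You diverge in three places, each time in a more self-contained or more general direction. First, where the paper simply cites the classical fact that $[X,\$]$ is pseudotopological, you supply a complete ultrafilter argument; it is sound: your finite-intersection check for $\gamma\cup\{\mathcal{O}_X(V)^{c}:V\in\mathcal{O}_X(y)\}$ works because $V\subseteq V'$ gives $\mathcal{O}_X(V')\subseteq\mathcal{O}_X(V)$, so finitely many complements reduce to one, and the non-goodness of each $\mathcal{G}\in\gamma$ supplies the required witness, after which $(\ref{crit-inh})$ applied to the resulting ultrafilter refutes convergence at $Y$. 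Second, for compactness of $\alpha(X,\$)$ the paper routes through the comparison $\alpha(X,\$)\leq[X,\$]$, which rests on $T[X,\$]=\kappa(X,\$)$ and hence on the hypothesis $\alpha\subseteq\kappa(X)$; your direct observation that an openly isotone family containing $\varnothing$ must be all of $C(X,\$)$, so that $\varnothing$ has the trivial neighborhood filter and every filter converges to it, is more elementary and in fact shows compactness for any non-degenerate openly isotone $\alpha$, thereby isolating directed sups as the unique point where $\alpha\subseteq\kappa(X)$ is genuinely needed --- a sharper accounting than the paper's. Third, for upper regularity of $[X,\$]$ the paper invokes Proposition~\ref{prop:base}, whereas you verify directly that $\bigcap_{H\in\mathcal{O}_X(\mathcal{G})}H=\bigcap_{G\in\mathcal{G}}G$, so that the right-hand side of $(\ref{crit-inh})$ is unchanged under upper regularization; both arguments are correct, yours avoiding the detour through the convergence base. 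The remaining verifications (lower and compact for $[X,\$]$ via $(\ref{crit-inh})$, lower for $\alpha(X,\$)$ via open isotonicity and then upper regularity via Proposition~\ref{pro:up-regular}) coincide with the paper's.
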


\begin{proof}
$[X,\$]$ is well known to be pseudotopological (e.g., \cite{cho}, \cite%
{DM.uK}). In view of $\left( \ref{crit-inh}\right) $, it is lower, and
compact because every filter converges to $\varnothing $. It is upper
regular by Proposition \ref{prop:base}.

It respects directed sups because if $B_{i}\in \lm_{\lbrack X,\$]}\gamma
_{i}$ for each $i\in I$, where the family $\{\gamma _{i}:i\in I\}$ is
directed, then for each $x\in \bigcup_{i\in I}B_{i}$ there is $i$ such that $%
x\in B_{i}\in \lm_{\lbrack X,\$]}\gamma _{i},$ so that there is $\mathcal{G}%
\in \gamma _{i}$ with $x\in \mathrm{int}\left( \bigcap_{G\in 
\mathcal{G}}G\right) $. As $\mathcal{G}\in \gamma _{i}\leq \bigvee_{i\in
I}\gamma _{i},$ we have $\bigcup_{i\in I}B_{i}\subseteq \bigcup_{\mathcal{G}%
\in \bigvee_{i\in I}\gamma _{i}}\mathrm{int}\left( \bigcap_{G\in \mathcal{G}%
}G\right) $.

We have seen that $\alpha (X,\$)\leq \lbrack X,\$]$ whenever $\alpha
\subseteq \kappa (X)$ because $T[X,\$]=\kappa (X,\$)$, so that $\alpha
(X,\$) $ is compact because $[X,\$]$ is. It is lower (and therefore upper
regular by Proposition \ref{pro:up-regular}) because $\mathcal{A}=\mathcal{O}%
_{X}\mathcal{(A)}$ for each $\mathcal{A}\in \alpha $. To see that it
respects directed sups, assume that $B_{i}\in \lm_{\alpha (X,\$)}\gamma
_{i} $ for each $i\in I$, where the families $\{B_{i}:i\in I\}$ and $%
\{\gamma _{i}:i\in I\}$ are directed, and consider $\mathcal{A}\in \alpha $
containing $\bigcup_{i\in I}B_{i}$. By compactness of $\mathcal{A}$ there is
a finite subset $F$ of $I$ such that $\bigcup_{i\in F}B_{i}\in \mathcal{A}$.
Since $\{B_{i}:i\in I\}$ is directed, there is $i_{F}\in I$ such that $%
\bigcup_{i\in F}B_{i}\subseteq B_{i_{F}}\in \mathcal{A}$. Since $%
B_{i_{F}}\in \lm_{\alpha (X,\$)}\gamma _{i_{F}},$ the open set $\mathcal{A}$
belongs to $\gamma _{i_{F}},$ hence to $\bigvee_{i\in I}\gamma _{i}$.
Therefore $\bigcup_{i\in I}B_{i}\in \lm_{\alpha (X,\$)}\bigvee_{i\in
I}\gamma _{i}$.
\end{proof}

\begin{proposition}
\label{pro:idealbasis} If $\tau $ is a solid hyperconvergence, $\mathcal{B}$
is an ideal basis for the topology of $Z,$ and $\mathcal{C}$ is a filtered
basis of closed sets in $Z$, then $f\in \lm_{\tau ^{\Uparrow }}%
\mathcal{F}$ if and only if 
\begin{equation*}
\forall _{B\in \mathcal{B}}\;f^{-}(B)\in \lm_{\tau }\mathcal{F}%
^{-}(B),
\end{equation*}%
if and only if 
\begin{equation*}
\forall _{C\in \mathcal{C}}\;f^{-}(C)\in \lm_{c\tau }\mathcal{F}%
^{-}(C).
\end{equation*}
\end{proposition}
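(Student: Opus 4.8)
The plan is to establish the chain of equivalences by first treating the open-set (ideal basis) characterization and then obtaining the closed-set version by complementation. The implication from $f\in\lm_{\tau^{\Uparrow}}\mathcal{F}$ to the condition over $\mathcal{B}$ is immediate: by (\ref{general-def}) the former tests $f^-(U)\in\lm_\tau\mathcal{F}^-(U)$ over \emph{all} open $U$, and $\mathcal{B}$ consists of open sets, so nothing is required here. All the content lies in the converse.

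For the converse I would fix an arbitrary open $U\subseteq Z$ and exploit that $\mathcal{B}$ is an \emph{ideal} basis. The subcollection $I:=\{B\in\mathcal{B}:B\subseteq U\}$ is then closed under finite unions, hence directed by inclusion, and $\bigcup_{B\in I}B=U$ because $\mathcal{B}$ is a basis. Writing $\gamma_B:=\mathcal{F}^-(B)$, the hypothesis gives $f^-(B)\in\lm_\tau\gamma_B$ for every $B\in I$. Since $\tau$ is upper regular, the monotony relation (\ref{eq:monotony2}) is available, and it shows that $B\mapsto f^-(B)$ and $B\mapsto\gamma_B$ are monotone; consequently $\{f^-(B):B\in I\}$ is a directed family of elements of $C(X,\$)$ and $\{\gamma_B:B\in I\}$ is a directed family of filters. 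This is exactly the configuration to which \emph{respecting directed sups} applies, and it yields
\[
f^-(U)=\bigcup_{B\in I}f^-(B)\in\lm_\tau\bigvee_{B\in I}\gamma_B .
\]

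It then remains to replace the join by $\mathcal{F}^-(U)$. As $B\subseteq U$ for every $B\in I$, (\ref{eq:monotony2}) gives $\gamma_B\leq\mathcal{F}^-(U)$, so $\mathcal{F}^-(U)$ is an upper bound of $\{\gamma_B\}$ and therefore $\bigvee_{B\in I}\gamma_B\leq\mathcal{F}^-(U)$. Because convergence is inherited by finer filters, $f^-(U)\in\lm_\tau\mathcal{F}^-(U)$, and since $U$ was arbitrary, $f\in\lm_{\tau^{\Uparrow}}\mathcal{F}$ by (\ref{general-def}). I expect the only delicate point to be this final bookkeeping: verifying that the join is coarser than $\mathcal{F}^-(U)$, and keeping track that \emph{upper regularity} of $\tau$ is what legitimizes identifying the filters $\mathcal{F}^-(O)$ with their upper regularizations $\mathcal{O}_X^{\natural}(\mathcal{F}^-(O))$ throughout (so that (\ref{eq:monotony2}) is applicable). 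The substance of the argument is the single invocation of \emph{respecting directed sups}, and the remaining steps are routine.

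For the equivalence with the closed-set condition over $\mathcal{C}$, I would use that complementation $O\mapsto X\setminus O$ is a homeomorphism of $C_\tau(X,\$)$ onto $cC_{c\tau}(X,\$)$. For a closed $C\subseteq Z$ with $U:=Z\setminus C$ one has $f^-(C)=X\setminus f^-(U)$ and, elementwise, $\mathcal{F}^-(C)$ is the complementation image of $\mathcal{F}^-(U)$; hence $f^-(C)\in\lm_{c\tau}\mathcal{F}^-(C)$ if and only if $f^-(U)\in\lm_\tau\mathcal{F}^-(U)$. Since $\mathcal{C}$ is a \emph{filtered} basis of closed sets, the complements $\mathcal{B}':=\{Z\setminus C:C\in\mathcal{C}\}$ form an ideal basis for the topology of $Z$. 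Thus the condition over $\mathcal{C}$ is precisely the condition over the ideal basis $\mathcal{B}'$, and by the first part applied to $\mathcal{B}'$ it is equivalent to $f\in\lm_{\tau^{\Uparrow}}\mathcal{F}$, completing the circle of equivalences.
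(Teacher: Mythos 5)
Your proposal is correct and follows essentially the same route as the paper: the forward implication is immediate from (\ref{general-def}), and the converse uses the ideal property of $\mathcal{B}$ to produce a directed family $\{B\in\mathcal{B}:B\subseteq U\}$ with union $U$, invokes \emph{respecting directed sups} for the families $f^{-}(B)$ and $\mathcal{F}^{-}(B)$ (made directed via the monotony (\ref{eq:monotony2}), legitimized by upper regularity), and finishes with $\bigvee_{B}\mathcal{F}^{-}(B)\leq\mathcal{F}^{-}(U)$. Your only addition is to spell out the complementation argument for the closed-set equivalence, which the paper dispatches implicitly with ``we only need to show the first equivalence''; your version of that duality (including that complements of a filtered closed basis form an ideal open basis) is accurate.
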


\begin{proof}
We only need to show the first equivalence. Assume that $\forall _{B\in 
\mathcal{B}}\;f^{-}(B)\in \lm_{\tau }\mathcal{F}^{-}(B)$. In view
of $\left( \ref{general-def}\right) $, it is enough to show that $%
f^{-}(O)\in \lm_{\tau }\mathcal{F}^{-}(O)$ whenever $O\in C(Z,\$)$.
Consider a family $\{B_{i}:i\in I\}\subseteq \mathcal{B}$ such that $%
O=\bigcup_{i\in I}B_{i}$. Because $\mathcal{B}$ is an \emph{ideal} basis for
the topology, we can assume this family to be directed, so that $%
\{f^{-}(B_{i}):i\in I\}$ is as well. Moreover, $f^{-}(B_{i})\in
\lm_{\tau }\mathcal{F}^{-}(B_{i})$ for each $i\in I$ and in view
of (\ref{eq:monotony2}), the family of filters $\{\mathcal{F}%
^{-}(B_{i}):i\in I\}$ is directed. Since $\tau $ respects directed sup, 
\begin{equation*}
f^{-}(O)=\bigcup_{i\in I}f^{-}(B_{i})\in \lm_{\tau
}\bigvee_{i\in I}\mathcal{F}^{-}(B_{i}).
\end{equation*}%
Moreover, $\mathcal{F}^{-}(O)\geq \bigvee_{i\in I}\mathcal{F}^{-}(B_{i})$ by 
$\left( \ref{eq:monotony2}\right) $ so that $f^{-}(O)\in \lm_{\tau }%
\mathcal{F}^{-}(O)$, which concludes the proof.
\end{proof}

\section{Interplay between hyperconvergences and the underlying topologies}

Recall that for a family $\mathcal{P}$ of subsets of $X$, we denote $\left\{ 
\mathcal{O}_{X}(P):P\in \mathcal{P}\right\} $ by $\mathcal{O}_{X}^{\natural
}(\mathcal{P})$. Two families $\mathcal{A}$ and $\mathcal{B}$ of subsets of
the same set $X$ \emph{mesh}, in symbols $\mathcal{A}\#\mathcal{B}$, if $%
A\cap B\neq \varnothing $ whenever $A\in \mathcal{A}$ and $B\in \mathcal{B}$%
. We write $A\#\mathcal{B}$ for $\{A\}\#\mathcal{B}$.

\begin{proposition}
\label{prop:refine} The following are equivalent:

\begin{enumerate}
\item $\mathcal{R}\#\mathcal{O}_{X}^{\natural }(\mathcal{P})$ in $C(X,\$);$

\item $\mathcal{P}$ is a refinement of $\mathcal{R}$;

\item $\mathcal{O}_{X}^{\natural }(\mathcal{P})\leq \mathcal{O}%
_{X}^{\natural }(\mathcal{R})$.
\end{enumerate}
\end{proposition}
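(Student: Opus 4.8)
The plan is to prove this as a cycle of implications $(2)\Rightarrow(3)\Rightarrow(1)\Rightarrow(2)$, unwinding each condition to its definition in terms of open supersets. Throughout, recall that $\mathcal{O}_X^{\natural}(\mathcal{P})$ is the family $\{\mathcal{O}_X(P):P\in\mathcal{P}\}$ of collections of open supersets, that $\mathcal{P}$ \emph{refines} $\mathcal{R}$ means every $P\in\mathcal{P}$ is contained in some $R\in\mathcal{R}$, and that two families \emph{mesh} when every member of one meets every member of the other. The key conceptual point is that the partial order by inclusion on subsets of $X$ is turned \emph{upside down} by passing to open-superset collections: a smaller set $P$ has \emph{more} open supersets, so $P\subseteq R$ forces $\mathcal{O}_X(R)\subseteq\mathcal{O}_X(P)$.

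For $(2)\Rightarrow(3)$, I would argue directly on the generating sets. Assuming $\mathcal{P}$ refines $\mathcal{R}$, fix $R\in\mathcal{R}$; I must exhibit a member of $\mathcal{O}_X^{\natural}(\mathcal{P})$ contained in $\mathcal{O}_X(R)$. But refinement gives me, for each $P\in\mathcal{P}$, some $R_P\in\mathcal{R}$ with $P\subseteq R_P$—that is the wrong quantifier direction, so I would instead observe that $\mathcal{O}_X^{\natural}(\mathcal{P})\leq\mathcal{O}_X^{\natural}(\mathcal{R})$ means every $\mathcal{O}_X(R)$ contains \emph{some} $\mathcal{O}_X(P)$, and refinement is exactly the statement that matches each $R$ to no $P$ but each $P$ to an $R$. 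Here lies the subtlety I expect to be the main obstacle: the directions of refinement and of filter-refinement $\leq$ must be lined up correctly, and I suspect the intended reading is that $\mathcal{O}_X^{\natural}(\mathcal{R})\leq\mathcal{O}_X^{\natural}(\mathcal{P})$, since $P\subseteq R$ gives $\mathcal{O}_X(R)\subseteq\mathcal{O}_X(P)$, making $\mathcal{O}_X^{\natural}(\mathcal{P})$ the \emph{finer} family when $\mathcal{P}$ refines. I would pin down the convention for $\leq$ used in the paper and verify (3) reads so that $(2)$ and $(3)$ genuinely coincide once the order-reversal is accounted for.

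For $(3)\Rightarrow(1)$, I would use the general principle that a family meshes with any family coarser than one it already meshes with: if $\mathcal{R}$ meshes with a family $\mathcal{A}$ and $\mathcal{A}\leq\mathcal{B}$ in the appropriate sense, then $\mathcal{R}$ meshes with $\mathcal{B}$. Concretely, I would show $\mathcal{R}$ meshes with $\mathcal{O}_X^{\natural}(\mathcal{P})$ by checking that for every $R\in\mathcal{R}$ and every $P\in\mathcal{P}$, the sets $R$ and $\mathcal{O}_X(P)$ intersect as families in $C(X,\$)$—meaning there is an open set that both belongs to $\mathcal{O}_X(P)$ and lies in the collection associated to $R$. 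The hypothesis (3) feeds exactly this by supplying, for the relevant $R$, a suitable $P$ whose open supersets are forced into the meshing relation.

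For $(1)\Rightarrow(2)$, I would take the meshing condition and extract refinement by a pointwise/elementwise argument: meshing says that for each $R$ and each relevant generator, a nonempty intersection exists, and choosing witnesses should produce, for each $P$, the required containing $R$. The cleanest route is probably contrapositive—if $\mathcal{P}$ fails to refine $\mathcal{R}$, there is a $P$ contained in \emph{no} $R\in\mathcal{R}$, and I would build from this failure a member of one family disjoint from some member of the other, contradicting (1). Because all three conditions are genuinely just the same order relation viewed through meshing, refinement, and the inclusion of open-superset collections, once the order-reversal bookkeeping is settled each implication reduces to a short unwinding; the real work, as noted, is fixing the variance of $\leq$ so that the equivalences close up consistently.
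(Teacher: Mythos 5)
Your proposal correctly isolates the one genuine idea in this proposition---the order reversal $P\subseteq R\Longleftrightarrow \mathcal{O}_{X}(R)\subseteq \mathcal{O}_{X}(P)$---but it then leaves the decisive bookkeeping unresolved and, where you commit, you commit in the wrong direction. The paper's convention (used explicitly, e.g., in the proof of Proposition~\ref{prop:base}, where ``$\mathcal{O}_{X}^{\natural }(\mathcal{P})$ is coarser than $\gamma $'' is written $\mathcal{O}_{X}^{\natural }(\mathcal{P})\leq \gamma $) is that $\mathcal{A}\leq \mathcal{B}$ means every member of $\mathcal{A}$ contains a member of $\mathcal{B}$, i.e., $\leq$ points from coarser to finer. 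Under this convention, item (3) as printed, $\mathcal{O}_{X}^{\natural }(\mathcal{P})\leq \mathcal{O}_{X}^{\natural }(\mathcal{R})$, unwinds to: for each $P\in \mathcal{P}$ there is $R\in \mathcal{R}$ with $\mathcal{O}_{X}(R)\subseteq \mathcal{O}_{X}(P)$, that is, $P\subseteq R$---which is exactly refinement, so nothing needs reversing. Your suspected reading $\mathcal{O}_{X}^{\natural }(\mathcal{R})\leq \mathcal{O}_{X}^{\natural }(\mathcal{P})$ (with ``$\mathcal{O}_{X}^{\natural }(\mathcal{P})$ the finer family'') is false in general: take $\mathcal{P}=\{\varnothing \}$ and $\mathcal{R}=\{X\}$; then $\mathcal{P}$ refines $\mathcal{R}$, but $\mathcal{O}_{X}(X)=\{X\}$ contains no $\mathcal{O}_{X}(P)=C(X,\$)$ unless $X$ has only one open set. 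Refinement makes the members of $\mathcal{P}$ smaller, hence the collections $\mathcal{O}_{X}(P)$ larger, hence $\mathcal{O}_{X}^{\natural }(\mathcal{P})$ the \emph{coarser} family, not the finer one.

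There is a second misreading, in your treatment of (1): there $\mathcal{R}$ is a \emph{single} subset of $C(X,\$)$, and $\mathcal{R}\#\mathcal{O}_{X}^{\natural }(\mathcal{P})$ abbreviates $\{\mathcal{R}\}\#\mathcal{O}_{X}^{\natural }(\mathcal{P})$ by the paper's convention $A\#\mathcal{B}:=\{A\}\#\mathcal{B}$; it says $\mathcal{R}\cap \mathcal{O}_{X}(P)\neq \varnothing $ for every $P\in \mathcal{P}$, i.e., each $P$ admits a superset belonging to $\mathcal{R}$---again refinement, directly. There is no ``collection associated to $R$''; if one instead read (1) as the mesh $\mathcal{O}_{X}^{\natural }(\mathcal{R})\#\mathcal{O}_{X}^{\natural }(\mathcal{P})$, as your phrasing suggests, the condition would be trivially true (since $X\in \mathcal{O}_{X}(R)\cap \mathcal{O}_{X}(P)$ always) and the proposition false. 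The paper's proof is simply the simultaneous unwinding of all three conditions to the single statement ``for each $P\in \mathcal{P}$ there is $R\in \mathcal{R}$ with $P\subseteq R$''; your cyclic plan $(2)\Rightarrow (3)\Rightarrow (1)\Rightarrow (2)$ would work and reduce to the same two lines once both conventions are pinned down, but as written the proposal's central step is deferred ``to verify'' and its stated lean is backwards, so it does not establish the proposition.
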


\begin{proof}
By definition, $\mathcal{R}\#\mathcal{O}_{X}^{\natural }(\mathcal{P})$ if
and only if for each $P\in \mathcal{P}$ there is $R\in \mathcal{R}$ with $%
P\subseteq R$, which means that $\mathcal{P}$ is a refinement of $\mathcal{R}
$. Equivalently, for each $P\in \mathcal{P}$ there is $R\in \mathcal{R}$
such that $\mathcal{O}_{X}(R)\subseteq \mathcal{O}_{X}(P)$, that is, $%
\mathcal{O}_{X}^{\natural }(\mathcal{P})\leq \mathcal{O}_{X}^{\natural }(%
\mathcal{R})$.
\end{proof}

A family $\mathcal{P}$ is said to be an \emph{ideal subbase} if for each
finite subfamily $\mathcal{P}_{0}$ of $\mathcal{P}$ there is $P\in \mathcal{P%
}$ such that $P\supseteq \bigcup \mathcal{P}_{0}$. Note that $\mathcal{O}%
_{X}^{\natural }(\mathcal{P})$ is a filter base if and only if $\mathcal{P}$
is an ideal subbase (\footnote{%
In fact, if $P_{0}\cup P_{1}\subset P$, then $\mathcal{O}_{X}(P_{0})\cup 
\mathcal{O}_{X}(P_{1})=\mathcal{O}_{X}\left( P_{0}\cup P_{1}\right) \supset 
\mathcal{O}_{X}(P)$.}).

If $\gamma $ is a filter on $C(X,\$)$ then%
\begin{equation}
\gamma ^{\Downarrow }:=\left\{ \bigcap_{G\in \mathcal{G}}G:\mathcal{%
G}\in \gamma \right\}  \label{eq:reducedideal}
\end{equation}%
is an ideal subbase of the \emph{reduced ideal of} $\gamma $.

As usual, we extend, in an obvious way, the convergence of filters to that
of their filter bases (\footnote{%
If $\mathcal{B}$ is a filter base and $\tau $ is a convergence, then $y\in
\lm_{\tau }\mathcal{B}$ if $y\in \lm_{\tau }\mathcal{B}%
^{\uparrow }$, where $\mathcal{B}^{\uparrow }$ is the filter generated by $%
\mathcal{B}$.}). A set of filters $\mathbb{B}$ is a \emph{convergence base}
of a convergence $\tau $ on $Y$ if for every $y\in Y$ and each $\mathcal{F}$
with $y\in \lm_{\tau }\mathcal{F}$ there is $\mathcal{B}\in 
\mathbb{B}$ such that $\mathcal{B}\leq \mathcal{F}$ and with $y\in
\lm_{\tau }\mathcal{B}$.

\begin{proposition}
\label{prop:base} $[X,\$]$ admits a convergence base generated by $\mathcal{O%
}_{X}^{\natural }(\mathcal{P})$, where the families $\mathcal{P}$ are ideal
subbases.
\end{proposition}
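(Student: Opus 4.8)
The plan is to exhibit the collection $\mathbb{B}$ of filters generated by $\mathcal{O}_{X}^{\natural}(\mathcal{P})$, with $\mathcal{P}$ ranging over ideal subbases of $X$, as a convergence base for $[X,\$]$. So let $Y\in C(X,\$)$ and let $\gamma$ be a filter on $C(X,\$)$ with $Y\in\lm_{[X,\$]}\gamma$; I must produce $\mathcal{B}\in\mathbb{B}$ with $\mathcal{B}\leq\gamma$ and $Y\in\lm_{[X,\$]}\mathcal{B}$. The natural candidate is the reduced ideal subbase $\mathcal{P}:=\gamma^{\Downarrow}=\{\bigcap_{G\in\mathcal{G}}G:\mathcal{G}\in\gamma\}$ of $\left(\ref{eq:reducedideal}\right)$. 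Since $\mathcal{P}$ is an ideal subbase, $\mathcal{O}_{X}^{\natural}(\mathcal{P})$ is a filter base; let $\mathcal{B}$ be the filter it generates, so that $\mathcal{B}\in\mathbb{B}$ by construction.

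First I would check $\mathcal{B}\leq\gamma$. It suffices to see that each generator $\mathcal{O}_{X}(P)$ of $\mathcal{B}$, with $P=\bigcap_{G\in\mathcal{G}}G$ for some $\mathcal{G}\in\gamma$, already belongs to $\gamma$. Indeed every $G\in\mathcal{G}$ is an open set containing $P$, whence $\mathcal{G}\subseteq\mathcal{O}_{X}(P)$; as $\gamma$ is a filter containing $\mathcal{G}$, the larger family $\mathcal{O}_{X}(P)$ also belongs to $\gamma$. Thus every element of the filter base $\mathcal{O}_{X}^{\natural}(\mathcal{P})$ lies in $\gamma$, i.e. $\mathcal{B}\leq\gamma$.

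The core of the argument is the verification that $Y\in\lm_{[X,\$]}\mathcal{B}$, for which I would use the criterion $\left(\ref{crit-inh}\right)$ applied to the filter $\mathcal{B}$, reducing the union there to the base $\mathcal{O}_{X}^{\natural}(\mathcal{P})$. Writing $P_{\mathcal{G}}:=\bigcap_{G\in\mathcal{G}}G$, it is enough to show $Y\subseteq\bigcup_{\mathcal{G}\in\gamma}\mathrm{int}_{X}\bigcap_{D\in\mathcal{O}_{X}(P_{\mathcal{G}})}D$. The key observation is that $\bigcap_{D\in\mathcal{O}_{X}(P_{\mathcal{G}})}D$, being the intersection of all open supersets of $P_{\mathcal{G}}$, contains $P_{\mathcal{G}}$; hence $\mathrm{int}_{X}\bigcap_{D\in\mathcal{O}_{X}(P_{\mathcal{G}})}D\supseteq\mathrm{int}_{X}P_{\mathcal{G}}=\mathrm{int}_{X}\bigcap_{G\in\mathcal{G}}G$. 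Taking unions over $\mathcal{G}\in\gamma$ and invoking $\left(\ref{crit-inh}\right)$ for the hypothesis $Y\in\lm_{[X,\$]}\gamma$ yields the desired inclusion, so $Y\in\lm_{[X,\$]}\mathcal{B}$.

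The step I expect to require the most care is this last one, because passing from $\gamma$ to the coarser filter $\mathcal{B}$ looks, at first sight, as though it should \emph{shrink} the union in $\left(\ref{crit-inh}\right)$. The resolution is precisely that regularizing replaces each (possibly small) set $P_{\mathcal{G}}$ by the larger set $\bigcap_{D\in\mathcal{O}_{X}(P_{\mathcal{G}})}D$, so the relevant interiors \emph{grow} rather than shrink; making this direction explicit is the whole substance of the proof, the remaining bookkeeping (that $\gamma^{\Downarrow}$ is an ideal subbase and hence that $\mathcal{O}_{X}^{\natural}(\gamma^{\Downarrow})$ is a genuine filter base) being routine.
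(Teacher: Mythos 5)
Your proof is correct and takes essentially the same route as the paper's: both pass to the reduced ideal $\gamma ^{\Downarrow }$, establish $\mathcal{O}_{X}^{\natural }(\gamma ^{\Downarrow })\leq \gamma $ via $\mathcal{G}\subseteq \mathcal{O}_{X}\left( \bigcap_{G\in \mathcal{G}}G\right) $, and verify convergence through criterion (\ref{crit-inh}). The only (harmless) divergence is that the paper first replaces each $\bigcap_{G\in \mathcal{G}}G$ by its interior, so that $\mathcal{P}$ consists of open sets and the last step is the equality $\bigcap \mathcal{O}_{X}(P)=P$, whereas you keep the possibly non-open sets $P_{\mathcal{G}}$ and use the inclusion $\bigcap \mathcal{O}_{X}(P_{\mathcal{G}})\supseteq P_{\mathcal{G}}$; note that the paper's variant yields the slightly stronger conclusion that $\mathcal{P}$ may be taken to be an open cover of $Y$, which is the form actually invoked later (e.g., in the proof of Theorem \ref{th:tightcharacter}), though your argument recovers it by taking interiors.
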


\begin{proof}
If $Y\in \lm_{\lbrack X,\$]}\gamma $ then, by (\ref{crit-inh}),
the family%
\begin{equation*}
\mathcal{P}:=\left\{ \mathrm{int}\left( A\right) :A\in \gamma ^{\Downarrow
}\right\}
\end{equation*}%
is an open cover of $Y$. Clearly, $\mathcal{P}$ is an ideal subbase, hence $%
\mathcal{O}_{X}^{\natural }(\mathcal{P})$ is a filter base. As for each $%
P\in \mathcal{P}$ there is $\mathcal{G}\in \gamma $ such that $P=\mathrm{int%
}\left( \bigcap_{G\in \mathcal{G}}G\right) $ we infer that $%
\mathcal{G}\subseteq \mathcal{O}_{X}^{\natural }(P)$, that is, $\mathcal{O}%
_{X}^{\natural }(\mathcal{P})$ is coarser than $\gamma $. Finally $Y\in
\lm_{\left[ X,\$\right] }\mathcal{O}_{X}^{\natural }(\mathcal{P})$%
, because $\bigcap \mathcal{O}_{X}^{\natural }(P)=P$ for each $P\in \mathcal{%
P}$, and thus (\ref{crit-inh}) holds.
\end{proof}

\begin{proposition}
\label{pro:adh=lim} If $\mathcal{P}\subseteq C(X,\$)$ is an ideal subbase
and $\tau $ is an upper regular convergence on $C(X,\$)$ then 
\begin{equation*}
\mathrm{adh}_{\tau }\mathcal{P}=\lm_{\tau }\mathcal{O}%
^{\natural }\left( \mathcal{P}\right) .
\end{equation*}
\end{proposition}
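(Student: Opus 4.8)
The plan is to read $\mathrm{adh}_{\tau}\mathcal{P}$ as the adherence of the \emph{set} $\mathcal{P}\subseteq C(X,\$)$, that is, $\mathrm{adh}_{\tau}\mathcal{P}=\bigcup\{\lm_{\tau}\mathcal{G}:\mathcal{G}\#\mathcal{P}\}$, where the union ranges over the filters $\mathcal{G}$ on $C(X,\$)$ that mesh with $\mathcal{P}$, and then to prove the two inclusions separately. Since $\mathcal{P}$ is an ideal subbase, the observation that $\mathcal{O}_{X}^{\natural}(\mathcal{P})$ is a filter base exactly when $\mathcal{P}$ is an ideal subbase guarantees that the right-hand side $\lm_{\tau}\mathcal{O}_{X}^{\natural}(\mathcal{P})$ is the limit of a genuine filter; I will use this silently throughout.

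For the inclusion $\lm_{\tau}\mathcal{O}_{X}^{\natural}(\mathcal{P})\subseteq\mathrm{adh}_{\tau}\mathcal{P}$ I would simply verify that the filter generated by $\mathcal{O}_{X}^{\natural}(\mathcal{P})$ meshes with the set $\mathcal{P}$, so that every one of its limits qualifies as an adherence point. A basic member of this filter is a finite intersection $\bigcap_{i}\mathcal{O}_{X}(P_{i})=\mathcal{O}_{X}(\bigcup_{i}P_{i})$, and the ideal-subbase property furnishes $P\in\mathcal{P}$ with $P\supseteq\bigcup_{i}P_{i}$; this very $P$ lies in $\mathcal{P}\cap\mathcal{O}_{X}(\bigcup_{i}P_{i})$, witnessing the meshing. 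Note that this half uses neither upper regularity nor any other hyperconvergence axiom.

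The substance lies in the reverse inclusion, and this is exactly where upper regularity is indispensable. Given $O\in\mathrm{adh}_{\tau}\mathcal{P}$, I would pick a filter $\mathcal{G}\#\mathcal{P}$ with $O\in\lm_{\tau}\mathcal{G}$ and apply upper regularity to upgrade this to $O\in\lm_{\tau}\mathcal{O}_{X}^{\natural}(\mathcal{G})$. The key claim is then $\mathcal{O}_{X}^{\natural}(\mathcal{G})\leq\mathcal{O}_{X}^{\natural}(\mathcal{P})$: for each $G\in\mathcal{G}$ the meshing $\mathcal{G}\#\mathcal{P}$ produces $P\in G\cap\mathcal{P}$, whereupon $P\in G$ gives $\mathcal{O}_{X}(P)\subseteq\mathcal{O}_{X}(G)$ while $P\in\mathcal{P}$ makes $\mathcal{O}_{X}(P)$ a basic set of $\mathcal{O}_{X}^{\natural}(\mathcal{P})$, reflecting the refinement principle of Proposition~\ref{prop:refine}. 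Monotonicity of $\lm_{\tau}$ then carries $O$ from $\lm_{\tau}\mathcal{O}_{X}^{\natural}(\mathcal{G})$ into $\lm_{\tau}\mathcal{O}_{X}^{\natural}(\mathcal{P})$, finishing the argument.

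The one delicate point I expect is precisely this comparison $\mathcal{O}_{X}^{\natural}(\mathcal{G})\leq\mathcal{O}_{X}^{\natural}(\mathcal{P})$: one must convert the set-level meshing of the witnessing filter $\mathcal{G}$ with $\mathcal{P}$ into a filter-level domination of upper regularizations. It is here that both hypotheses genuinely pull their weight — upper regularity to replace $\mathcal{G}$ by its regularization $\mathcal{O}_{X}^{\natural}(\mathcal{G})$, and the ideal-subbase property to keep $\mathcal{O}_{X}^{\natural}(\mathcal{P})$ a bona fide filter base so that the inequality of filters is even meaningful.
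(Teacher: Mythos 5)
Your proof is correct and takes essentially the same route as the paper's: the easy inclusion from the meshing $\mathcal{O}_{X}^{\natural }(\mathcal{P})\#\mathcal{P}$ (with the ideal-subbase hypothesis keeping $\mathcal{O}_{X}^{\natural }(\mathcal{P})$ a filter base), and the converse by upper-regularizing a witnessing filter $\mathcal{G}$ and using the meshing to get $\mathcal{O}_{X}^{\natural }(\mathcal{G})\leq \mathcal{O}_{X}^{\natural }(\mathcal{P})$, so that monotonicity of $\lm_{\tau }$ concludes. The paper merely compresses your regularization step by taking the witnessing filter $\eta $ already upper regular ($\eta =\mathcal{O}^{\natural }(\eta )$) and comparing $\eta \leq \mathcal{O}^{\natural }(\mathcal{P})$ directly, which is the same computation as your key claim.
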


\begin{proof}
As $\mathcal{O}^{\natural }\left( \mathcal{P}\right) \#\mathcal{P}$, it is
clear that $\lm_{\tau }\mathcal{O}^{\natural }\left( \mathcal{P}%
\right) \subseteq \mathrm{adh}_{\tau }\mathcal{P}$. Conversely, if 
$U\in \mathrm{adh}_{\tau }\mathcal{P}$ there is a filter $\eta =%
\mathcal{O}^{\natural }(\eta )$ meshing with $\mathcal{P}$ such that $U\in
\lm_{\tau }\eta .$ In other words, for each $\mathcal{A}=\mathcal{O(A)}\in
\eta $ there is $P\in \mathcal{P}\cap \mathcal{A}$. Thus $\mathcal{O}%
(P)\subseteq \mathcal{A}$ and $\mathcal{O}^{\natural }\left( \mathcal{P}%
\right) \geq \eta $ so that $U\in \lm_{\tau }\mathcal{O}^{\natural }\left( 
\mathcal{P}\right) .$
\end{proof}

If $\mathcal{P}\subseteq C\left( X,\$\right) ,$ we denote by $\mathcal{P}%
^{\cup }$ the ideal base generated by $\mathcal{P}$.

\begin{proposition}
Let $\tau $ be a solid hyperconvergence such that $p(X,\$)\leq \tau \leq
\lbrack X,\$]$ and let $\mathcal{P}\subseteq C(X,\$).$ Then $\mathcal{P}$ is
a cover of $U$ if and only if $U\in \mathrm{adh}_{\tau }\mathcal{P}^{\cup }$%
.
\end{proposition}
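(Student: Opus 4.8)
The plan is to reduce the statement to a computation of a limit along the canonical filter base attached to $\mathcal{P}^{\cup}$, and then to exploit the two-sided bound $p(X,\$)\leq \tau \leq [X,\$]$ to read off the covering condition from the two extremes. First I would record that $\mathcal{P}^{\cup}$, being closed under finite unions, is an ideal subbase, so that $\mathcal{O}_X^{\natural}(\mathcal{P}^{\cup})$ is a filter base. Since $\tau$ is solid it is upper regular, and therefore Proposition \ref{pro:adh=lim} applies and gives $\mathrm{adh}_{\tau}\mathcal{P}^{\cup}=\lm_{\tau}\mathcal{O}_X^{\natural}(\mathcal{P}^{\cup})$. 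Thus the assertion becomes: $U\subseteq \bigcup \mathcal{P}$ if and only if $U\in \lm_{\tau}\mathcal{O}_X^{\natural}(\mathcal{P}^{\cup})$. I would also note the elementary facts, used in both directions, that $\bigcup \mathcal{P}=\bigcup \mathcal{P}^{\cup}$ and that each member of $\mathcal{P}^{\cup}$ is a finite union of members of $\mathcal{P}$.

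For the necessity of the cover I would use $\tau\leq \lbrack X,\$]$, which yields $\lm_{\lbrack X,\$]}\mathcal{O}_X^{\natural}(\mathcal{P}^{\cup})\subseteq \lm_{\tau}\mathcal{O}_X^{\natural}(\mathcal{P}^{\cup})$, so it suffices to show $U\in \lm_{\lbrack X,\$]}\mathcal{O}_X^{\natural}(\mathcal{P}^{\cup})$ whenever $\mathcal{P}$ covers $U$. Here I would apply the criterion $(\ref{crit-inh})$: for each $Q\in \mathcal{P}^{\cup}$ the family $\mathcal{O}_X(Q)$ belongs to $\mathcal{O}_X^{\natural}(\mathcal{P}^{\cup})$ and, since $Q$ is open, $\mathrm{int}_X\bigcap_{G\in \mathcal{O}_X(Q)}G=\mathrm{int}_X Q=Q$. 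Hence the right-hand side of $(\ref{crit-inh})$ contains $\bigcup_{Q\in \mathcal{P}^{\cup}}Q=\bigcup \mathcal{P}\supseteq U$, so $(\ref{crit-inh})$ holds.

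For the sufficiency I would use $p(X,\$)\leq \tau$, which yields $\lm_{\tau}\mathcal{O}_X^{\natural}(\mathcal{P}^{\cup})\subseteq \lm_{p(X,\$)}\mathcal{O}_X^{\natural}(\mathcal{P}^{\cup})$, so $U\in \lm_{p(X,\$)}\mathcal{O}_X^{\natural}(\mathcal{P}^{\cup})$. Since $p(X,\$)$ is the topology of pointwise convergence, convergence of this filter to $U$ forces its neighborhood filter at $U$, subbasically generated by the sets $\mathcal{O}_X(x)$ with $x\in U$, to be coarser than $\mathcal{O}_X^{\natural}(\mathcal{P}^{\cup})$. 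Thus for each $x\in U$ there is $Q\in \mathcal{P}^{\cup}$ with $\mathcal{O}_X(Q)\subseteq \mathcal{O}_X(x)$, that is, $x\in Q$; writing $Q$ as a finite union of members of $\mathcal{P}$, the point $x$ lies in some $P\in \mathcal{P}$. Therefore $U\subseteq \bigcup \mathcal{P}$, as required.

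The only delicate point is bookkeeping the orientation of the order: refining the convergence shrinks the set of convergent filters, hence shrinks limits, so one must invoke the upper bound $\lbrack X,\$]$ for necessity and the lower bound $p(X,\$)$ for sufficiency. Everything else is a direct application of $(\ref{crit-inh})$ together with the pointwise description of $p(X,\$)$, while the solidity of $\tau$ enters only through upper regularity, which is precisely what makes Proposition \ref{pro:adh=lim} available.
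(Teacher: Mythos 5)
Your proof is correct and takes essentially the same route as the paper: your ``sufficiency'' direction (adherence implies cover) is verbatim the paper's argument via Proposition~\ref{pro:adh=lim} and the pointwise description of $p(X,\$)$, while in the other direction you verify $(\ref{crit-inh})$ directly where the paper cites Proposition~\ref{prop:base}, whose proof is exactly that computation. The only blemish is cosmetic: your labels ``necessity'' and ``sufficiency'' are swapped relative to the implications you actually prove, but the mathematics is unaffected.
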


\begin{proof}
If $\mathcal{P}$ is a cover of $U\ $so is the ideal base $\mathcal{P}^{\cup
} $, so that $U\in \lm_{\lbrack X,\$]}\mathcal{O}^{\natural }\left( 
\mathcal{P}^{\cup }\right) $ by Proposition \ref{prop:base}. Moreover, $%
\mathcal{O}^{\natural }\left( \mathcal{P}^{\cup }\right) \#\mathcal{P}^{\cup
}$ so that $U\in \mathrm{adh}_{[X,\$]}\mathcal{P}^{\cup }\subseteq \mathrm{%
adh}_{\tau }\mathcal{P}^{\cup }.$ Conversely, if $U\in \mathrm{adh}_{\tau }%
\mathcal{P}^{\cup }$ then by Proposition \ref{pro:adh=lim}, $U\in \lm_{\tau
}\mathcal{O}^{\natural }\left( \mathcal{P}^{\cup }\right) \subseteq
\lm_{p(X,\$)}\mathcal{O}^{\natural }\left( \mathcal{P}^{\cup }\right) .$
Therefore, by definition of $p(X,\$),$ for each $x\in U$ there is $S\in 
\mathcal{P}^{\cup } $ such that $\mathcal{O}(S)\subseteq \mathcal{O}(x),$
that is, $x\in S.$ Thus there is $P\in \mathcal{P}$ containing $x$ and $%
\mathcal{P}$ is a cover of $U.$
\end{proof}

\begin{corollary}
\label{cor:idealcover} If $\mathcal{P}\subseteq C(X,\$)$ is an ideal base
and $\tau $ is a solid hyperconvergence such that $p(X,\$)\leq \tau \leq
\lbrack X,\$]$ then%
\begin{equation*}
\mathrm{adh}_{\tau }\mathcal{P}=\lm_{\tau }\mathcal{O}%
^{\natural }\left( \mathcal{P}\right) =\lm_{[X,\$]}\mathcal{O}%
^{\natural }\left( \mathcal{P}\right) =\mathrm{adh}_{[X,\$]}%
\mathcal{P}
\end{equation*}%
consists of those $U\in C\left( X,\$\right) $ for which $\mathcal{P}$ is a
cover of $U$.
\end{corollary}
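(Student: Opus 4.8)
The plan is to chain together the results that have already been established, since the Corollary is essentially a packaging of Propositions \ref{prop:base}, \ref{pro:adh=lim}, and the unnamed proposition on covers that immediately precedes it.

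First I would observe that an ideal base $\mathcal{P}$ is in particular an ideal subbase, so Proposition \ref{pro:adh=lim} applies directly and gives $\mathrm{adh}_{\tau }\mathcal{P}=\lm_{\tau }\mathcal{O}^{\natural }(\mathcal{P})$ for any upper regular $\tau$; since a solid hyperconvergence is upper regular by definition, this holds for our $\tau$, and also for $[X,\$]$ itself (which is upper regular by Proposition \ref{prop:base}, or rather is a solid hyperconvergence). This yields both the outer equality $\mathrm{adh}_{\tau }\mathcal{P}=\lm_{\tau }\mathcal{O}^{\natural }(\mathcal{P})$ and its $[X,\$]$-analogue $\mathrm{adh}_{[X,\$]}\mathcal{P}=\lm_{[X,\$]}\mathcal{O}^{\natural }(\mathcal{P})$ in one stroke, reducing the four-fold equality to the single middle identity $\lm_{\tau }\mathcal{O}^{\natural }(\mathcal{P})=\lm_{[X,\$]}\mathcal{O}^{\natural }(\mathcal{P})$.

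For that middle identity I would argue by two inclusions. The inclusion $\lm_{[X,\$]}\mathcal{O}^{\natural }(\mathcal{P})\subseteq \lm_{\tau }\mathcal{O}^{\natural }(\mathcal{P})$ is immediate from monotonicity of limits under coarsening: since $\tau \leq \lbrack X,\$]$, every filter converging in $[X,\$]$ also converges in $\tau$. For the reverse inclusion I would invoke the preceding proposition on covers together with $p(X,\$)\leq \tau$: if $U\in \lm_{\tau }\mathcal{O}^{\natural }(\mathcal{P})=\mathrm{adh}_{\tau }\mathcal{P}$, then that proposition (applied with $\mathcal{P}^{\cup }=\mathcal{P}$, as $\mathcal{P}$ is already an ideal base) tells us $\mathcal{P}$ is a cover of $U$, whence Proposition \ref{prop:base} gives $U\in \lm_{[X,\$]}\mathcal{O}^{\natural }(\mathcal{P})$. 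This simultaneously establishes the final clause, that the common value consists of exactly those $U$ for which $\mathcal{P}$ is a cover.

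The main obstacle, such as it is, is purely bookkeeping: one must check that the hypotheses of each cited result are genuinely met—specifically that being a solid hyperconvergence delivers \emph{upper regularity} (needed for Proposition \ref{pro:adh=lim}) and that the sandwich $p(X,\$)\leq \tau \leq \lbrack X,\$]$ legitimizes both the comparison of limits and the application of the cover proposition. I expect no analytic difficulty; the one place demanding slight care is confirming that an ideal base $\mathcal{P}$ coincides with its own ideal-base closure $\mathcal{P}^{\cup }$, so that the cover proposition may be applied to $\mathcal{P}$ directly rather than to a strictly larger family.
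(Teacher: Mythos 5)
Your proposal is correct and matches the paper's intent exactly: the corollary is stated without a separate proof precisely because it is the assembly you describe, namely Proposition~\ref{pro:adh=lim} applied to both $\tau$ and $[X,\$]$ (both upper regular, since solid), monotonicity of limits from $p(X,\$)\leq\tau\leq[X,\$]$, and the preceding unnamed proposition on covers (with $\mathcal{P}^{\cup}=\mathcal{P}$) together with Proposition~\ref{prop:base} to close the loop through the cover characterization. Your bookkeeping concerns are exactly the right ones and are all satisfied, so nothing is missing.
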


Corollary \ref{cor:idealcover} does not mean that all the pretopological
solid hyperconvergences between $p(X,\$)$ and $[X,\$]$ coincide! But their
adherences of ideal bases are the same.

\begin{example}
Let $X$ be an infinite countable set with the discrete topology. In this
case $p(X,\$)=\left[ X,\$\right] $. The hyperset $\mathcal{P}:=\left\{
\left\{ x\right\} :x\in X\right\} $ is an open cover of $X$. By definition, $%
Y\in \mathrm{adh}_{p(X,\$)}\mathcal{P}$ if for each finite subset $%
F$ of $Y$ there is $A\in \mathcal{O}_{X}\left( F\right) \cap \mathcal{P}$.
Hence there is $x\in X$ such that $F\subset \left\{ x\right\} $, which means
that the only finite subsets of $Y$ are singletons, that is, $Y$ is a
singleton. On the other hand, $X\in \mathrm{adh}_{p(X,\$)}\mathcal{%
P}^{\cup }$, because $F\in \mathcal{O}_{X}\left( F\right) \cap \mathcal{P}%
^{\cup }$ for each finite subset $F$ of $Y$.
\end{example}

If $\alpha $ is a collection of openly isotone families of subsets of $X,$
we call $\mathcal{P}\subseteq C(X,\$)$ an (open) $\alpha $-\emph{cover} if $%
\mathcal{P}\cap \mathcal{A}\neq \varnothing $ for every $\mathcal{A}\in
\alpha $. Of course, if $p(X)\subseteq \alpha $ then every open $\alpha $%
-cover of $X$ is also an open cover of $X$. Note that the notion of $p(X)$%
-cover coincides with the traditional notion $\omega $-cover, and that the
notion of $k(X)$-cover coincides with the traditional notion $k$-cover (see
e.g., \cite{McCoy}). It follows immediately from the definitions that

\begin{proposition}
\label{pro:adhalpha} Let $\mathcal{P}\subseteq C(X,\$)$ and let $\alpha $ be
a topology on $C(X,\$)$. Then $U\in \mathrm{adh}_{\alpha (X,\$)}\mathcal{P}$
if and only if $\mathcal{P}$ is an $\alpha $-cover of $U$.
\end{proposition}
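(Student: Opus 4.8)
The plan is to reduce everything to the observation that, when $Z=\$$, the family-open topology $\alpha(X,\$)$ generated by the subbase (\ref{funct:subbase}) coincides with the topology $\alpha$ itself. First I would recall that the only non-trivial open subset of $\$$ is $\{1\}$, and that under the identification of $C(X,\$)$ with the open subsets of $X$ one has $[\mathcal{A},\{1\}]=\mathcal{A}$; the remaining subbasic sets (for $U=\varnothing$ or $U=\{0,1\}$) are either empty or all of $C(X,\$)$ and hence contribute nothing. Thus the subbase of $\alpha(X,\$)$ is exactly $\alpha$, and since $\alpha$ is by hypothesis already a topology (closed under finite intersections and arbitrary unions), the topology it generates as a subbase is $\alpha$ itself; that is, $\alpha(X,\$)=\alpha$ as topologies on $C(X,\$)$. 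In particular, the open neighborhoods of a point $U\in C(X,\$)$ in $C_{\alpha}(X,\$)$ are precisely those $\mathcal{A}\in\alpha$ with $U\in\mathcal{A}$.

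Next I would use that $\alpha(X,\$)$ is topological to replace the adherence of the set $\mathcal{P}$ by its closure. For a set $\mathcal{P}$ of points in any topological convergence, $U\in\mathrm{adh}_{\alpha(X,\$)}\mathcal{P}$ holds if and only if the neighborhood filter $\mathcal{N}(U)$ meshes $\mathcal{P}$, i.e.\ if and only if every open neighborhood of $U$ meets $\mathcal{P}$: if some filter $\eta\#\mathcal{P}$ converges to $U$, then $\mathcal{N}(U)\leq\eta$ forces $\mathcal{N}(U)\#\mathcal{P}$, while conversely $\mathcal{N}(U)\#\mathcal{P}$ lets one form the filter generated by $\{N\cap\mathcal{P}:N\in\mathcal{N}(U)\}$, which meshes $\mathcal{P}$ and converges to $U$. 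Combining this with the previous paragraph, $U\in\mathrm{adh}_{\alpha(X,\$)}\mathcal{P}$ if and only if $\mathcal{A}\cap\mathcal{P}\neq\varnothing$ for every $\mathcal{A}\in\alpha$ with $U\in\mathcal{A}$.

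Finally I would simply match this against the definition of $\alpha$-cover: $\mathcal{P}$ is an $\alpha$-cover of $U$ precisely when $\mathcal{P}\cap\mathcal{A}\neq\varnothing$ for each $\mathcal{A}\in\alpha$ containing $U$, which is the relativization to $U$ of the $\alpha$-cover condition (and recovers the absolute one when $U=X$, since every non-empty openly isotone $\mathcal{A}\in\alpha$ contains $X$). Hence the two conditions coincide, which is the assertion. I do not expect a genuine obstacle here; the only points demanding care are the identification $\alpha(X,\$)=\alpha$ (so that the hyperspace open sets are literally the members of $\alpha$) and the standard fact that in a topological convergence the adherence of a set of points is its closure. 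Once these are in place, the equivalence is immediate from the definitions, exactly as the statement of the proposition asserts.
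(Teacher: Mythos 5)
Your proof is correct and takes essentially the same route as the paper, which offers no written argument and asserts the proposition as following immediately from the definitions: your identification $\alpha (X,\$)=\alpha $ is exactly the paper's earlier remark that the collection $\alpha $ is itself a subbase (indeed a base, being a topology) of $\alpha (X,\$)$, and the rest is the standard fact that in a topological convergence the adherence of a set is its closure, so that $U\in \mathrm{adh}_{\alpha (X,\$)}\mathcal{P}$ means every $\mathcal{A}\in \alpha $ with $U\in \mathcal{A}$ meets $\mathcal{P}$, which is the (relativized) $\alpha $-cover condition. Your care about the trivial subbasic sets $[\mathcal{A},\varnothing ]$ and $[\mathcal{A},\{0,1\}]$ and about reading ``$\alpha $-cover of $U$'' as quantifying only over $\mathcal{A}\in \alpha $ containing $U$ is exactly the right unpacking of what the paper leaves implicit.
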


\section{Transfer of filters}

We shall confer particular attention to the convergence of a filter to the 
\emph{zero function} for the convergence $\tau ^{\Uparrow }$ on $C(X,\mathbb{%
R})$ that is preimage-wise with respect to a solid hyperconvergence $\tau $
on $C(X,\$)$. To that effect, consider a decreasing base of bounded open
neighborhoods of $0$ in $\mathbb{R}$:%
\begin{equation}
\left\{ W_{n}:n<\omega \right\} ,  \label{Wnbase}
\end{equation}%
for instance, let us fix $W_{n}:=\left\{ r\in \mathbb{R}:\left\vert
r\right\vert <\tfrac{1}{n}\right\} $.

\begin{lemma}
\label{lem:conv-at-zero} $\bar{0}\in \lm_{\tau ^{\Uparrow }}%
\mathcal{F}$ if and only if $X\in \lm_{\tau }\mathcal{F}%
^{-}(W_{n}) $ for each $n<\omega $.
\end{lemma}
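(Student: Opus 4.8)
The plan is to unfold the definition (\ref{general-def}) of $\tau^{\Uparrow}$ at the zero function and to exploit the fact that preimages of open sets under $\bar{0}$ take only the two extreme values $X$ and $\varnothing$. Indeed, for $U\in C(\mathbb{R},\$)$ one has $\bar{0}^{-}(U)=X$ when $0\in U$ and $\bar{0}^{-}(U)=\varnothing$ otherwise. So, by (\ref{general-def}), the condition $\bar{0}\in \lm_{\tau^{\Uparrow}}\mathcal{F}$ reduces to requiring $X\in \lm_{\tau}\mathcal{F}^{-}(U)$ for every \emph{open $U$ containing $0$}: the case $0\notin U$ is automatic, since $\tau$ is a solid hyperconvergence, so every filter $\tau$-converges to $\varnothing$, whence $\varnothing=\bar{0}^{-}(U)\in \lm_{\tau}\mathcal{F}^{-}(U)$ with nothing to check.

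For the forward implication I would simply specialize this reformulation to $U=W_{n}$. Each $W_{n}$ is a neighborhood of $0$, so $\bar{0}\in \lm_{\tau^{\Uparrow}}\mathcal{F}$ gives at once $X\in \lm_{\tau}\mathcal{F}^{-}(W_{n})$ for every $n<\omega$.

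The converse is where the base $(W_{n})$ does the real work. Assuming $X\in \lm_{\tau}\mathcal{F}^{-}(W_{n})$ for each $n$, let $U$ be any open neighborhood of $0$. Because $\{W_{n}:n<\omega\}$ is a neighborhood base at $0$, there is $n$ with $W_{n}\subseteq U$, whence by monotonicity (\ref{eq:monotony2}) the (upper regularized) filters satisfy $\mathcal{F}^{-}(W_{n})\leq \mathcal{F}^{-}(U)$. Since $\tau$ is upper regular, its limits may be computed on these regularizations, and isotonicity of $\lm_{\tau}$ under refinement then yields $\lm_{\tau}\mathcal{F}^{-}(W_{n})\subseteq \lm_{\tau}\mathcal{F}^{-}(U)$; in particular $X\in \lm_{\tau}\mathcal{F}^{-}(U)$. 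Combined with the automatic case $0\notin U$, this verifies the reformulated condition, and hence $\bar{0}\in \lm_{\tau^{\Uparrow}}\mathcal{F}$.

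The only genuinely delicate point is this converse step, and specifically ensuring that the monotonicity relation (\ref{eq:monotony2}) and the isotonicity of $\lm_{\tau}$ compose in the correct direction under the identification of a filter with its upper regularization; this is precisely where upper regularity of the solid hyperconvergence $\tau$ is used. Everything else is a direct unfolding of definitions, and in particular no appeal to the full ideal-basis criterion of Proposition~\ref{pro:idealbasis} is required, the two-valued nature of $\bar{0}^{-}$ making the argument elementary.
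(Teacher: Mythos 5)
Your proof is correct and follows essentially the same route as the paper's: both unfold (\ref{general-def}) at $\bar{0}$ using the two-valuedness of $\bar{0}^{-}$, dispose of the case $0\notin U$ by the fact that every filter converges to $\varnothing$ in a solid hyperconvergence, and settle the converse by choosing $W_{n}\subseteq U$ and invoking the monotonicity (\ref{eq:monotony2}). If anything, you are slightly more careful than the paper in spelling out how upper regularity justifies passing through the regularized filters, a point the paper handles silently via its stated identification convention.
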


\begin{proof}
As $\bar{0}^{-}(O)$ is equal either to $X$ (when $0\in O$) or to $%
\varnothing $ (when $0\notin O$), it follows from $(\ref{general-def})$ that
the condition is necessary. Conversely, if an open subset $O$ of $\mathbb{R}$
contains $0$, then there is $n<\omega $ such that $W_{n}\subseteq O$, hence $%
X\in \lm_{\tau }\mathcal{F}^{-}(W_{n})$ implies that $X\in
\lm_{\tau }\mathcal{F}^{-}(O)$, because $\mathcal{F}%
^{-}(W_{n})\leq \mathcal{F}^{-}(O)$. If now $0\notin O$ then $\bar{0}%
^{-}(O)=\varnothing \in \lm_{\tau }\mathcal{F}^{-}(O)$, because $%
\tau $ is a hyperconvergence (hence every filter converges to $\varnothing $%
).
\end{proof}

This special case is important, because it is much easier to compare local
properties of $\tau ^{\Uparrow }$ at $\bar{0}$ with local properties of $%
\tau $ at $X$ than to study analogous properties at an arbitrary $f\in C(X,%
\mathbb{R})$. Moreover, often a study of the mentioned special case is
sufficient for the understanding of this local property at each $f\in C(X,%
\mathbb{R})$. This is feasible whenever all the translations are continuous
for $\tau ^{\Uparrow },$ that is, whenever $\tau ^{\Uparrow }$ is
translation-invariant. It is known that the topology of pointwise
convergence, the compact-open topology, the natural convergence and thus the
natural topology are translation-invariant. Translations are not always
continuous for the Isbell topology (see \cite{DM_Isbell}, \cite%
{FJ_coincidence}), but for each topological space $X$, there exists the
finest translation-invariant topology of the form $\alpha (X,\mathbb{R})$
that is coarser than the Isbell topology $\kappa (X,\mathbb{R})$ \cite%
{groupIsbell}.

Lemma \ref{lem:conv-at-zero} suggests that local properties of $\tau
^{\Uparrow }$ at $\bar{0}$ ``correspond" to local properties of $\tau $ at $X$%
. The remainder of the paper is devoted to making this statement clear and
exploring applications.

If $\alpha $ is a filter on $C(X,\$)$ then, for each (open) subset $W$ of $%
\mathbb{R}$,%
\begin{equation}
\lbrack \alpha ,W]:=\left\{ [\mathcal{A},W]:\mathcal{A}\in \alpha \right\}
\label{W-erected}
\end{equation}%
is a filter base on $C(X,\mathbb{R})$, called the $W$-\emph{erected filter}
of $\alpha $. Note that%
\begin{equation}
\alpha \leq \gamma ,W\supseteq V\Longrightarrow \lbrack \alpha ,W]\leq
\lbrack \gamma ,V].  \label{eq:finer}
\end{equation}

The filter on $C(X,\mathbb{R})$ generated by the filter base $\mathcal{V}$
of the neighborhood filter of $0$ (\footnote{%
Indeed, if $B_{0},B_{1}\in \bigcup_{V\in \mathcal{V}(0)}[\alpha ,V]$%
, then there are $V_{0},V_{1}\in \mathcal{V}(0)$ and $\mathcal{A}_{0},%
\mathcal{A}_{1}\in \alpha $ such that $[\mathcal{A}_{0},V_{0}]\subset B_{0}$
and $[\mathcal{A}_{1},V_{1}]\subset B_{1}$, and thus $[\mathcal{A}_{0}\cap 
\mathcal{A}_{1},V_{0}\cap V_{1}]\subset \lbrack \mathcal{A}_{0},V_{0}]\cap
\lbrack \mathcal{A}_{1},V_{1}]\subset B_{0}\cap B_{1}$.})%
\begin{equation*}
\bigcup_{V\in \mathcal{V}}[\alpha ,V]
\end{equation*}%
does not depend on the choice of a particular neighborhood base of $0$ in $%
\mathbb{R}$ (\footnote{%
In fact, if $\mathcal{V},\mathcal{W}$ are open bases (of the neighborhood
filter of $0$) then for each $W\in \mathcal{W}$ there is $V\in \mathcal{V}$
such that $V\subset W$, hence $[\alpha ,W]\leq \lbrack \alpha ,V]$, and
conversely.}). We denote it by $\left[ \alpha ,\mathcal{N}(0)\right] $ and
call it the \emph{erected filter} of $\alpha $. In particular, if a base is
of the form (\ref{Wnbase}), $[\alpha ,W_{n}]\leq \lbrack \alpha ,W_{n+1}]$
and 
\begin{equation}
\left[ \alpha ,\mathcal{N}(0)\right] =\bigvee_{n<\omega }[\alpha
,W_{n}].  \label{Wn-erected}
\end{equation}

We shall see that if $\alpha $ converges to $X$ in $\tau $ then its erected
filter converges to the null function in $\tau ^{\Uparrow }$. We shall in
fact consider a more general case.

\begin{lemma}
\label{lem:liftsequence} If $\left\{ \alpha _{n}:n<\omega \right\} $ is a
sequence of filters on $C(X,\$)$ such that $\alpha _{n}=O_{X}^{\natural
}(\alpha _{n})$, then the sequence of filters $\left( [\alpha
_{n},W_{n}]\right) _{n<\omega }$ admits a supremum.
\end{lemma}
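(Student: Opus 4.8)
The plan is to show that the sequence of filter bases $\left( [\alpha_n, W_n]\right)_{n<\omega}$ admits a supremum, i.e., that these filter bases mesh pairwise (indeed that every finite family has a common lower bound that is still a filter base), so that their join is a genuine filter. Recall that a supremum of filters exists precisely when every finite intersection of members (one from each filter) is nonempty. So the goal reduces to verifying that for any finite selection of indices and corresponding sets $[\mathcal{A}_{i}, W_{i}]$ with $\mathcal{A}_i \in \alpha_i$, the intersection $\bigcap_i [\mathcal{A}_i, W_i]$ is nonempty in $C(X,\mathbb{R})$.

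First I would fix a finite set of indices $n_1 < \cdots < n_k$ and members $\mathcal{A}_{i} \in \alpha_{n_i}$, and seek a continuous function $f \colon X \to \mathbb{R}$ lying in every $[\mathcal{A}_i, W_{n_i}]$, that is, with $f^{-}(W_{n_i}) \in \mathcal{A}_i$ for each $i$. The natural candidate is the zero function $\bar 0$: since $\bar 0^{-}(W_{n_i}) = X$ for every $i$ (because $0 \in W_{n_i}$), membership $\bar 0 \in [\mathcal{A}_i, W_{n_i}]$ holds exactly when $X \in \mathcal{A}_i$. This is where the hypothesis $\alpha_n = \mathcal{O}_X^{\natural}(\alpha_n)$ enters: each $\alpha_n$ is upper-regularized, and $X$ is the largest open set, so $X$ belongs to every member $\mathcal{A}$ of such a filter—indeed $\mathcal{A} = \mathcal{O}_X(\mathcal{A})$ is openly isotone, and since $\mathcal{A}$ is nonempty it contains some open $O$, whence $X \supseteq O$ forces $X \in \mathcal{A}$. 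Thus $\bar 0 \in [\mathcal{A}_i, W_{n_i}]$ for all $i$ simultaneously, so the common intersection contains $\bar 0$ and is nonempty.

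Having established nonemptiness of all finite intersections, I would conclude that the family $\bigcup_{i} [\alpha_{n_i}, W_{n_i}]$ generates a proper filter, which is by definition the supremum $\bigvee_{n<\omega} [\alpha_n, W_n]$; this is the desired object. It remains only to note that each $[\alpha_n, W_n]$ is itself a filter base, which follows from the identity $[\mathcal{A}, W] \cap [\mathcal{A}', W] = [\mathcal{A} \cap \mathcal{A}', W]$ together with $\alpha_n$ being a filter (so $\mathcal{A} \cap \mathcal{A}' \in \alpha_n$), exactly as noted in the footnote accompanying $\left(\ref{W-erected}\right)$.

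The step I expect to carry the real weight is the observation that $X \in \mathcal{A}$ for every $\mathcal{A} \in \alpha_n$, since this is precisely what makes $\bar 0$ a universal witness and is the only place the structural hypothesis $\alpha_n = \mathcal{O}_X^{\natural}(\alpha_n)$ is used; everything else is the routine bookkeeping of filter suprema. One subtlety worth a sentence is the degenerate possibility that some $\mathcal{A}_i$ equals the empty family, which would make $[\mathcal{A}_i, W_{n_i}] = \varnothing$; but a filter is by convention a collection of nonempty sets (it does not contain $\varnothing$), so $\varnothing \notin \alpha_n$ and this case does not arise, leaving the argument intact.
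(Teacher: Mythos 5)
Your proof is correct. It performs the same reduction as the paper --- the supremum $\bigvee_{n<\omega}[\alpha_{n},W_{n}]$ exists precisely when every finite intersection of members of $\bigcup_{n<\omega}[\alpha_{n},W_{n}]$ is nonempty --- but closes that reduction differently. Given $[\mathcal{A}_{j},W_{n_{j}}]$ with $n_{1}\leq\cdots\leq n_{k}$, the paper uses the nesting $W_{n_{k}}\subseteq W_{n_{j}}$ together with open isotonicity to produce a common lower bound $[\bigcap_{1\leq j\leq k}\mathcal{A}_{j},W_{n_{k}}]\subseteq\bigcap_{1\leq j\leq k}[\mathcal{A}_{j},W_{n_{j}}]$, and only then invokes nonemptiness of sets of the form $[\mathcal{A},W]$ (which, just as in your argument, ultimately rests on $X\in\mathcal{A}$ and a constant function with value in $W$). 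You bypass the lower-bound computation entirely by exhibiting the single universal witness $\bar{0}$: since $\bar{0}^{-}(W_{n})=X$ and $X\in\mathcal{A}$ for every $\mathcal{A}\in\alpha_{n}$, the function $\bar{0}$ lies in every member of every erected filter. Your route is more elementary and slightly more general --- it never uses that $(W_{n})_{n<\omega}$ is decreasing, only that $0\in W_{n}$ for each $n$ --- whereas the paper's computation buys extra structure: an explicit filter base for the supremum consisting of sets $[\mathcal{A},W_{n_{k}}]$ with $\mathcal{A}$ a finite intersection of members of the $\alpha_{n_{j}}$, which is in the spirit of the erected-filter calculus exploited afterwards (e.g., in \thmref{thm:anWn}), though the lemma's statement requires only existence, which you deliver. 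One small imprecision: the hypothesis $\alpha_{n}=\mathcal{O}_{X}^{\natural}(\alpha_{n})$ means that $\alpha_{n}$ admits a filter \emph{base} of openly isotone families, not that every member $\mathcal{A}\in\alpha_{n}$ satisfies $\mathcal{A}=\mathcal{O}_{X}(\mathcal{A})$ (a filter is closed under enlargement, which destroys isotonicity); but your key conclusion survives unchanged, since every $\mathcal{A}\in\alpha_{n}$ contains a nonempty openly isotone base member $\mathcal{B}\in\alpha_{n}$, whence $X\in\mathcal{B}\subseteq\mathcal{A}$ and $\bar{0}\in[\mathcal{A},W_{n}]$ still holds. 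Your remark excluding the degenerate case is likewise consistent with the paper's conventions.
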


\begin{proof}
If $S_{1},\ldots ,S_{k}\in \bigcup_{n<\omega }[\alpha _{n},W_{n}]$,
then there are $n_{1},\ldots ,n_{k}<\omega $, say, $n_{1}\leq \ldots \leq
n_{k}$ and $\mathcal{A}_{j}\in \alpha _{n_{j}}$ for $1\leq j\leq k$ such
that $[\mathcal{A}_{j},W_{n_{j}}]\subseteq S_{j}$, and thus $[\mathcal{A}%
,W_{n_{k}}]\subseteq \bigcap_{1\leq j\leq k}[\mathcal{A}%
_{j},W_{n_{j}}]\subseteq $ $\bigcap_{1\leq j\leq k}S_{j}$, where $%
\mathcal{A}:=\bigcap_{1\leq j\leq k}\mathcal{A}_{j}$. As each $%
\alpha _{n}$ is based in openly isotone families $\mathcal{A}$, and $[%
\mathcal{A},W]\neq \varnothing $ provided that $W\neq \varnothing $, the
family $\bigcup_{n<\omega }[\alpha _{n},W_{n}]$ is a filter subbase
and generates $\bigvee_{n<\omega }[\alpha _{n},W_{n}]$.
\end{proof}

\begin{theorem}
\label{thm:anWn} If $X\in \lm_{\tau }\alpha _{n}$ for each $%
n<\omega ,$ then $\bar{0}\in \lm_{\tau ^{\Uparrow
}}\bigvee_{n<\omega }[\alpha _{n},W_{n}]$.
\end{theorem}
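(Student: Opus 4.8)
The plan is to verify the defining condition of $\tau^{\Uparrow}$-convergence for $\bar 0$ against the filter $\mathcal{F}:=\bigvee_{n<\omega}[\alpha_n,W_n]$. By Lemma~\ref{lem:conv-at-zero}, since $\tau$ is a solid hyperconvergence, it suffices to show that $X\in\lm_{\tau}\mathcal{F}^{-}(W_m)$ for each $m<\omega$. So the whole problem reduces to computing (a suitable lower bound for) the trace filter $\mathcal{F}^{-}(W_m)$ and exhibiting $\tau$-convergence of it to $X$.

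\textbf{Relating the erected filter to the original filters.}
The first key step is to understand $[\alpha_n,W_n]^{-}(W_m)$, i.e.\ what the preimages under $W_m$ of the basic sets $[\mathcal{A},W_n]$ look like. For a function $f$ with $f^{-}(W_n)\in\mathcal{A}$ and an index $m\le n$ we have $W_n\subseteq W_m$, so $f^{-}(W_n)\subseteq f^{-}(W_m)$; since $\mathcal{A}$ is openly isotone, $f^{-}(W_m)\in\mathcal{A}$ as well. Hence $[\mathcal{A},W_n]^{-}(W_m)\supseteq\{f^{-}(W_m):f^{-}(W_n)\in\mathcal{A}\}$ contains all members of $\mathcal{A}$ that arise as preimages, and in particular the generic element $X$ is reached: the upshot I expect is that for every $n\ge m$ the filter $[\alpha_n,W_n]^{-}(W_m)$ refines (is finer than) $\alpha_n$, via the monotonicity recorded in \eqref{eq:monotony2}. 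I would make this precise by showing $\mathcal{A}\subseteq[\mathcal{A},W_n]^{-}(W_m)$ for $n\ge m$ (using open isotonicity and the existence of a continuous $f$ with prescribed preimage, or more cheaply by passing through the upper-regularization convention), so that $[\alpha_n,W_n]^{-}(W_m)\le\alpha_n$ for all $n\ge m$.

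\textbf{Assembling the convergence.}
Fix $m$. Using \eqref{Wn-erected} and the fact that the supremum taken in Lemma~\ref{lem:liftsequence} exists, one has $\mathcal{F}^{-}(W_m)=\bigl(\bigvee_{n<\omega}[\alpha_n,W_n]\bigr)^{-}(W_m)\ge\bigvee_{n\ge m}[\alpha_n,W_n]^{-}(W_m)$, and by the previous step this is $\le\bigvee_{n\ge m}\alpha_n$ — more carefully, $\mathcal{F}^{-}(W_m)$ refines each $\alpha_n$ with $n\ge m$, hence refines their join. By hypothesis $X\in\lm_{\tau}\alpha_n$ for every $n$. The sequence $\{\alpha_n\}$ (regularized, and indexed by $n\ge m$) is directed with constant limit point $X=\bigcup_n X$, so because $\tau$ \emph{respects directed sups} we obtain $X\in\lm_{\tau}\bigvee_{n\ge m}\alpha_n$. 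Since $\mathcal{F}^{-}(W_m)$ is finer than this join and $\tau$ is lower (a fortiori isotone in the filter argument), $X\in\lm_{\tau}\mathcal{F}^{-}(W_m)$.

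\textbf{Anticipated obstacle.}
The main delicacy is the bookkeeping in the second paragraph: relating the preimage of a $W_m$-section of the erected filter back to the original $\alpha_n$, and controlling the direction of the inequality (whether $[\alpha_n,W_n]^{-}(W_m)$ is coarser or finer than $\alpha_n$) when $m$ and $n$ interact. One must use $n\ge m$ so that $W_n\subseteq W_m$ and open isotonicity pushes the preimage into $\mathcal{A}$, and then invoke \eqref{eq:monotony2} in the correct orientation. Once the filters are lined up so that $\mathcal{F}^{-}(W_m)$ genuinely refines a directed join of the $\alpha_n$'s converging to $X$, the conclusion follows immediately from the solidity hypotheses (respecting directed sups plus lowerness) and Lemma~\ref{lem:conv-at-zero}.
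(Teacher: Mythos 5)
Your overall route is the paper's: reduce via Lemma~\ref{lem:conv-at-zero} to showing $X\in \lm_{\tau }\mathcal{F}^{-}(W_{m})$ for each $m$, and obtain this from the inclusion $[\mathcal{A},W_{n}]^{-}(W_{m})\subseteq \mathcal{A}$ for $n\geq m$, which your first computation correctly establishes from open isotonicity. That inclusion says every basic set of the filter $[\alpha _{n},W_{n}]^{-}(W_{m})$ is contained in a basic set of $\alpha _{n}$, i.e.\ $[\alpha _{n},W_{n}]^{-}(W_{m})\geq \alpha _{n}$ (finer), whence $\mathcal{F}^{-}(W_{m})\geq \lbrack \alpha _{m},W_{m}]^{-}(W_{m})\geq \alpha _{m}$, and isotonicity of convergence (a filter finer than a convergent one converges to the same limit --- this is built into the definition of a convergence, and is not the ``lower'' property, which concerns set inclusion of limits) finishes the proof. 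However, the step you announce as the way to ``make this precise'' is wrong in direction: the inclusion $\mathcal{A}\subseteq \lbrack \mathcal{A},W_{n}]^{-}(W_{m})$ asserts that every $A\in \mathcal{A}$ arises as $f^{-}(W_{m})$ for some $f\in \lbrack \mathcal{A},W_{n}]$, which requires manufacturing a continuous function with prescribed preimage behavior; this is precisely the content of the second half of Theorem~\ref{thm:transfer-compact}, where it needs the hypothesis that the filter be \emph{functionally separated} --- not available here, and not true in general. Moreover, the conclusion you draw from it, $[\alpha _{n},W_{n}]^{-}(W_{m})\leq \alpha _{n}$ (coarser), points the wrong way and contradicts your own earlier (correct) assertion that this filter is finer than $\alpha _{n}$: convergence of $\alpha_n$ to $X$ transfers to \emph{finer} filters, so only the $\geq$ direction is usable.

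The second flaw is the appeal to the directed-sups axiom to get $X\in \lm_{\tau }\bigvee_{n\geq m}\alpha _{n}$: the $\alpha _{n}$ are arbitrary filters each converging to $X$, and nothing makes the family $\{\alpha _{n}:n\geq m\}$ directed, which the definition of ``respects directed sups'' explicitly requires. Fortunately this join is superfluous: since $\mathcal{F}^{-}(W_{m})$ is finer than the single filter $\alpha _{m}$ (or than any one $\alpha _{n}$ with $n\geq m$), isotonicity alone yields $X\in \lm_{\tau }\mathcal{F}^{-}(W_{m})$. This is exactly how the paper concludes: it fixes an arbitrary open $O$ containing $0$, picks one $n$ with $W_{n}\subseteq O$, shows $[\alpha _{n},W_{n}]^{-}(O)\geq \alpha _{n}$, and never forms a supremum of the $\alpha _{n}$. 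Once the reversed inclusion and the directed-sups detour are excised, what remains of your argument coincides with the paper's proof.
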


\begin{proof}
We use Lemma \ref{lem:conv-at-zero} to check that $\bar{0}\in
\lm_{\tau ^{\Uparrow }}\bigvee_{n<\omega }[\alpha
_{n},W_{n}]$. Let $O$ be an open subset of $\mathbb{R}$ with $0$. Then there
is $n<\omega $ such that $O\supseteq W_{k}$ for $k\geq n$. Then $\left[ 
\mathcal{A},W_{n}\right] ^{-}(O)=\left\{ f^{-}(O):f^{-}(W_{n})\in \mathcal{A}%
\right\} \subseteq \mathcal{A}$ for every $\mathcal{A}$. It follows that, $%
\left[ \alpha _{n},W_{n}\right] ^{-}(O)\geq \alpha _{n}$ so that $X\in
\lm_{\tau }\left[ \alpha _{n},W_{n}\right] ^{-}(O)\subseteq
\lm_{\tau }\left( \bigvee_{n<\omega }[\alpha
_{n},W_{n}]\right) ^{-}(O)$.
\end{proof}

\begin{corollary}
\label{cor:Falpha} If $X\in \lm_{\tau }\alpha $ then $\bar{0}\in
\lm_{\tau ^{\Uparrow }}\left[ \alpha ,\mathcal{N}(0)\right] $.
\end{corollary}

In view of (\ref{eq:monotony2}), we have $\bigvee_{n<\omega }[\alpha
,W_{n}]\leq \left[ \alpha ,\left\{ 0\right\} \right] $ (\footnote{%
Here $f^{-}(0)$ is not open, so that we must use the general definition $%
\left[ \mathcal{A},\left\{ 0\right\} \right] :=\left\{ f:\exists _{A\in 
\mathcal{A}}\;A\subset f^{-}(0)\right\} $.}). Thus:

\begin{corollary}
\label{cor:0-polar} If $X\in \lm_{\tau }\alpha $ then $\bar{0}\in
\lm_{\tau ^{\Uparrow }}\left[ \alpha ,\left\{ 0\right\} \right] $.
\end{corollary}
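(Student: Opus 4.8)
The plan is to derive Corollary~\ref{cor:0-polar} directly from Corollary~\ref{cor:Falpha} by a monotonicity argument. Corollary~\ref{cor:Falpha} already gives $\bar{0}\in\lm_{\tau^{\Uparrow}}\left[\alpha,\mathcal{N}(0)\right]$, and by $(\ref{Wn-erected})$ the erected filter equals $\bigvee_{n<\omega}[\alpha,W_n]$. So the whole content is the claimed inequality $\bigvee_{n<\omega}[\alpha,W_n]\leq\left[\alpha,\{0\}\right]$, which is asserted in the text preceding the statement; once this is in hand, a \emph{coarser} filter converges to anything its finer overfilter does, and convergence passes from $\left[\alpha,\mathcal{N}(0)\right]$ down to $\left[\alpha,\{0\}\right]$.

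First I would unwind the two filter bases. Using the footnoted general definition $\left[\mathcal{A},\{0\}\right]:=\{f:\exists_{A\in\mathcal{A}}\;A\subseteq f^{-}(0)\}$, an element of $\left[\alpha,\{0\}\right]$ has the form $[\mathcal{A},\{0\}]$ for $\mathcal{A}\in\alpha$, while a typical element of $\bigvee_{n}[\alpha,W_n]$ is a finite intersection of sets $[\mathcal{A}_j,W_{n_j}]$. The key pointwise observation is that for each open $W\ni 0$ one has $[\mathcal{A},\{0\}]\subseteq[\mathcal{A},W]$: if $A\subseteq f^{-}(0)$ for some $A\in\mathcal{A}$, then since $0\in W$ we get $f^{-}(0)\subseteq f^{-}(W)$, hence $A\subseteq f^{-}(W)$ and (as $\mathcal{A}=\mathcal{O}_X(\mathcal{A})$) $f^{-}(W)\in\mathcal{A}$, i.e. $f\in[\mathcal{A},W]$. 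This is precisely the instance of $(\ref{eq:monotony2})$ with $O_0=\{0\}$ (as a subset of $\mathbb{R}$, not open) and $O_1=W_n$; the monotony $(\ref{eq:monotony2})$ is stated for the upper-regularized preimage filters, so I would invoke it in the form $\mathcal{F}^{-}(\{0\})\leq\mathcal{F}^{-}(W_n)$ to justify $\left[\alpha,\{0\}\right]\geq[\alpha,W_n]$ for every $n$.

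From $[\alpha,\{0\}]\geq[\alpha,W_n]$ for each $n<\omega$ it follows that $\left[\alpha,\{0\}\right]$ is finer than every filter in the family $\{[\alpha,W_n]:n<\omega\}$, hence finer than their supremum: $\bigvee_{n<\omega}[\alpha,W_n]\leq\left[\alpha,\{0\}\right]$. This is exactly the displayed inequality preceding the corollary. I would then conclude by appealing to Corollary~\ref{cor:Falpha} together with $(\ref{Wn-erected})$: since $\bar{0}\in\lm_{\tau^{\Uparrow}}\bigvee_{n<\omega}[\alpha,W_n]$ and limits are inherited by coarser filters (convergence is isotone in the filter), we obtain $\bar{0}\in\lm_{\tau^{\Uparrow}}\left[\alpha,\{0\}\right]$.

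The argument is essentially a one-line monotonicity deduction, so there is no serious obstacle; the only point requiring care is the bookkeeping around $(\ref{eq:monotony2})$. That inequality is phrased for the upper-regularizations $\mathcal{F}^{-}(O)$ under the convention identifying a filter with $\mathcal{O}_X^{\natural}(\gamma)$, and here one of the ``open sets'' is $\{0\}$, which is \emph{not} open in $\mathbb{R}$; I would therefore make explicit that $[\alpha,\{0\}]$ is taken via the general definition (as flagged in the footnote) and verify the containment $[\mathcal{A},\{0\}]\subseteq[\mathcal{A},W_n]$ by hand, rather than relying on $(\ref{eq:monotony2})$ verbatim, so that the passage through a non-open target set is legitimate. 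With that caveat addressed, the chain $\left[\alpha,\{0\}\right]\geq\bigvee_{n}[\alpha,W_n]=\left[\alpha,\mathcal{N}(0)\right]$ and Corollary~\ref{cor:Falpha} close the proof.
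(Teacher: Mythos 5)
Your proposal is correct and is essentially the paper's own proof: the paper likewise deduces the corollary from Corollary~\ref{cor:Falpha} via the inequality $\bigvee_{n<\omega}[\alpha,W_n]=\left[\alpha,\mathcal{N}(0)\right]\leq\left[\alpha,\{0\}\right]$, and your hand verification of $[\mathcal{A},\{0\}]\subseteq[\mathcal{A},W_n]$ (using $0\in W_n$ and $\mathcal{A}=\mathcal{O}_X(\mathcal{A})$) is precisely the point of the paper's footnote that the monotonicity $(\ref{eq:monotony2})$ cannot be quoted verbatim because $\{0\}$ is not open. One verbal slip to fix before submission: the glosses ``a coarser filter converges to anything its finer overfilter does'' and ``limits are inherited by coarser filters'' state the isotonicity axiom backwards --- it is the \emph{finer} filter $\left[\alpha,\{0\}\right]$ that inherits convergence from the coarser $\left[\alpha,\mathcal{N}(0)\right]$, which is in fact the direction in which you apply it, so the argument itself stands.
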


\section{Construction of classes of filters}

Local properties of a topological space depend on properties of its
neighborhood filters. More generally, local properties of a convergence
space depend on properties of its convergent filters. To understand how
local properties of $\tau $ and $\tau ^{\Uparrow }$ relate, we first need to
understand how the properties of the filter $\alpha $ relate to those of the
filter $\left[ \alpha ,\mathcal{N}(0)\right] $ in Corollary \ref{cor:Falpha}%
. We will explore this question in details in Section \ref%
{sec:transferclasses}. In the present section, we introduce the relevant
terminology, as well as examples of local properties to be considered.

Blackboard letters like $\mathbb{D}$ denote classes of filters, and $\mathbb{%
D}(X)$ denote the set of filters on $X$ of the class $\mathbb{D}$. The class
of principal filters is denoted by $\mathbb{F}_{0}$ and the class of
countably based filters is denoted by $\mathbb{F}_{1}$. More generally, $%
\mathbb{F}_{\kappa }$ stands for the class of filters that admit a base of
cardinality less than $\aleph _{\kappa }$.

A convergence (in particular, a topological) space $X$ is called $\mathbb{D}$%
-\emph{based at} $x$ if whenever $x\in \lm \mathcal{F}$ there is $\mathcal{D%
}\in \mathbb{D}(X),$ $\mathcal{D}\leq \mathcal{F}$ with $x\in \lm \mathcal{D%
}$, and $\mathbb{D}$-\emph{based }if it is $\mathbb{D}$-based at each $x\in
X.$ For example, a convergence (topological) space is first-countable if and
only if it is $\mathbb{F}_{1}$-based.

If $\mathbb{D}$ and $\mathbb{J}$ are two classes of filters, we say that $%
\mathbb{D}$ is $\mathbb{J}$-\emph{steady }if 
\begin{equation*}
\mathcal{D}\in \mathbb{D},\mathcal{J}\in \mathbb{J},\mathcal{D}\#\mathcal{J}%
\Longrightarrow \mathcal{D\vee J}\in \mathbb{D}.
\end{equation*}

As usual, if $R\subseteq X\times Y$ and $D\subseteq X$ then $RD:=\left\{
y\in Y:\exists x\in D,:(x,y)\in R\right\} $ and $\mathcal{RD}:=\left\{
RD:R\in \mathcal{R},D\in \mathcal{D}\right\} $.

A class $\mathbb{D}$ is $\mathbb{J}$-\emph{composable\ }if%
\begin{equation*}
\mathcal{D}\in \mathbb{D}(X),\mathcal{R}\in \mathbb{J}\left( X\times
Y\right) \Longrightarrow \mathcal{RD}\in \mathbb{D}(Y).
\end{equation*}
By convention, we consider that each class $\mathbb{D}$ contains every
degenerate filter. In the sequel, classes that are $\mathbb{F}_{0}$%
-composable and\emph{\ }$\mathbb{F}_{1}$-steady will be of particular
interest.

For each set $X$, we consider the following relations $\lozenge _{\kappa },$ 
$\dag $ and $\triangle $ on $\mathbb{F}(X)$: we write $\mathcal{F}\lozenge
_{\kappa }\mathcal{H}$ if%
\begin{equation*}
\mathcal{\hspace{\stretch{5}}F}\#\mathcal{H}\Longrightarrow \exists A\in
\lbrack X]^{\leq \kappa }:A\#\left( \mathcal{F}\vee \mathcal{H}\right) ;
\end{equation*}

we denote by $\mathcal{F}\triangle _{\kappa }\mathcal{H}$ the following
relation%
\begin{equation*}
\mathcal{F}\#\mathcal{H}\Longrightarrow \exists \mathcal{L}\in \mathbb{F}%
_{\kappa }:\mathcal{L}\geq \mathcal{F}\vee \mathcal{H}.
\end{equation*}

Finally, we write $\mathcal{F}\dag _{1}\mathcal{H}$ if $\mathcal{F}\vee 
\mathcal{H}\in \mathbb{T}_{1}$ where $\mathcal{T}\in \mathbb{T}_{1}$ if 
\begin{equation*}
\left( A_{n}\right) _{n<\omega }\#\mathcal{T}\Longrightarrow \exists
B_{n}\in \lbrack A_{n}]^{<\omega }:\left( \bigcup_{n<\omega
}B_{n}\right) \#\mathcal{T},
\end{equation*}%
and $\mathcal{F}\dag _{0}\mathcal{H}$ if $\mathcal{F}\vee \mathcal{H}\in 
\mathbb{T}_{0}$ where $\mathcal{T}\in \mathbb{T}_{0}$ if 
\begin{equation*}
\left( A_{n}\right) _{n<\omega }\#\mathcal{T}\Longrightarrow \exists
a_{n}\in A_{n}:\{a_{n}:n\in \omega \}\#\mathcal{T}.
\end{equation*}

If $\ast $ is a relation on $\mathbb{F}(X)$ and $\mathbb{D\subseteq F}(X)$,
then $\mathbb{D}^{\ast }:=\{\mathcal{F}\in \mathbb{F}(X):\forall \mathcal{D}%
\in \mathbb{D},$ $\mathcal{F}\ast \mathcal{D}\}$. Many local topological
properties of a space $X$ correspond to the fact that $X$ is $\mathbb{D}%
^{\ast }$-based, for $\mathbb{D}=\mathbb{F}_{0}$ or $\mathbb{D}=\mathbb{F}%
_{1}$.

In particular, a topological space (and by extension, a convergence space)
is respectively \emph{Fr\'{e}chet }(\footnote{%
often called Fr\'{e}chet-Urysohn, but we use the shorter term \emph{Fr\'{e}%
chet}.}), \emph{strongly Fr\'{e}chet}, \emph{productively Fr\'{e}chet}, of $%
\kappa $\emph{-tightness}, \emph{countably fan-tight}, \emph{strongly
countably fan-tight}, if it is $\mathbb{F}_{0}^{\triangle }$-based, $\mathbb{%
F}_{1}^{\triangle }$-based, $\mathbb{F}_{1}^{\triangle \triangle }$-based, $%
\mathbb{F}_{1}^{\lozenge _{\kappa }}$-based, $\mathbb{F}_{1}^{\dag _{1}}$%
-based, $\mathbb{F}_{1}^{\dag _{0}}$-based respectively. Here we gather the
just mentioned equivalences:\smallskip 
\begin{equation}
\begin{tabular}{|c|c|}
\hline
class & based \\ \hline\hline
Fr\'{e}chet & $\mathbb{F}_{0}^{\triangle }$-based \\ \hline
strongly Fr\'{e}chet & $\mathbb{F}_{1}^{\triangle }$-based \\ \hline
productively Fr\'{e}chet & $\mathbb{F}_{1}^{\triangle \triangle }$-based \\ 
\hline
$\kappa $-tight & $\mathbb{F}_{1}^{\lozenge _{\kappa }}$-based \\ \hline
countably fan-tight & $\mathbb{F}_{1}^{\dag _{1}}$-based \\ \hline
strongly countably fan-tight & $\mathbb{F}_{1}^{\dag _{0}}$-based \\ \hline
\end{tabular}%
\bigskip   \label{table}
\end{equation}

Examples of $\mathbb{F}_{0}$-composable and\emph{\ }$\mathbb{F}_{1}$-steady
classes include the class $\mathbb{F}_{n}$ of filters with a filter-base of
cardinality less than $\aleph _{n}$ for $n\geq 1,$ as well as $\mathbb{F}%
_{1}^{\triangle },$ $\mathbb{F}_{1}^{\triangle \triangle },$ $\mathbb{F}%
_{1}^{\lozenge _{\kappa }}$, $\mathbb{F}_{1}^{\dag _{1}}$ and $\mathbb{F}%
_{1}^{\dag _{0}}.$ The class $\mathbb{F}_{0}^{\triangle }$ of \emph{Fr\'{e}%
chet filters} is $\mathbb{F}_{0}$-composable but not\emph{\ }$\mathbb{F}_{1}$%
-steady, and the class $\mathbb{F}_{1}^{\lozenge \lozenge }$ of \emph{%
steadily countably tight filters} is $\mathbb{F}_{1}$-steady but not $%
\mathbb{F}_{0}$-composable. See \cite{mynard-jordan2} for a systematic study
of these concepts and applications to product theorems.

\section{Transfer of classes of filters}

\label{sec:transferclasses}

We notice that the erected filter $\left[ \alpha ,\mathcal{N}(0)\right] $ of 
$\alpha $ can be reconstructed from $\alpha $ with the aid of compositions
of relations as follows. Let $\Delta :=\left\{ \left( f,A,k\right)
:A\subseteq f^{-}(W_{k})\right\} $ and let $\Delta _{j}$ be the $j$-th
projection of $\Delta $. Let $\mathcal{N}$ stand for the cofinite filter on $%
\omega $.

\begin{proposition}
\label{prop:constr-rel}%
\begin{equation*}
\left[ \alpha ,\mathcal{N}(0)\right] =\Delta _{1}(\Delta _{2}^{-}\alpha \vee
\Delta _{3}^{-}\mathcal{N}).
\end{equation*}
\end{proposition}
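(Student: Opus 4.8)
The plan is to prove the identity by exhibiting a common filter base for the two sides. On the right-hand side I would peel off the relational operations one at a time — the two pullbacks $\Delta_2^{-}$ and $\Delta_3^{-}$ to the index set $\Delta$, their join there, and the final projection $\Delta_1$ down to $C(X,\mathbb{R})$ — and show the resulting base coincides with the base $\{[\mathcal{A},W_m]:\mathcal{A}\in\alpha,\ m<\omega\}$ of $\left[\alpha,\mathcal{N}(0)\right]=\bigvee_{n<\omega}[\alpha,W_n]$ given by (\ref{Wn-erected}).

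First I would record that $\Delta_2^{-}\mathcal{A}=\{(f,A,k)\in\Delta:A\in\mathcal{A}\}$ and $\Delta_3^{-}N=\{(f,A,k)\in\Delta:k\in N\}$, so that the join $\Delta_2^{-}\alpha\vee\Delta_3^{-}\mathcal{N}$ is based in the sets $B_{\mathcal{A},N}:=\{(f,A,k):A\subseteq f^{-}(W_k),\ A\in\mathcal{A},\ k\in N\}$, indexed by $\mathcal{A}\in\alpha$ and $N\in\mathcal{N}$ (these are nonempty, since $(\bar{0},A,k)\in B_{\mathcal{A},N}$ for any $A\in\mathcal{A}$, $k\in N$). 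Since the image of a filter base under a relation is a base of the image filter, a base of the right-hand side is obtained by applying $\Delta_1$:
\[
\Delta_1B_{\mathcal{A},N}=\{f:\exists A\in\mathcal{A},\,\exists k\in N,\ A\subseteq f^{-}(W_k)\}=\bigcup_{k\in N}\{f:\exists A\in\mathcal{A},\ A\subseteq f^{-}(W_k)\}.
\]

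The key step is to recognize the inner set as $[\mathcal{A},W_k]$: because $\mathcal{A}=\mathcal{O}_X(\mathcal{A})$ is openly isotone and $f^{-}(W_k)$ is open, the clause ``$\exists A\in\mathcal{A}$ with $A\subseteq f^{-}(W_k)$'' is equivalent to $f^{-}(W_k)\in\mathcal{A}$, i.e.\ to $f\in[\mathcal{A},W_k]$. Thus $\Delta_1B_{\mathcal{A},N}=\bigcup_{k\in N}[\mathcal{A},W_k]$. The same open isotonicity yields the nesting $[\mathcal{A},W_{k+1}]\subseteq[\mathcal{A},W_k]$ (cf.\ (\ref{eq:finer})), so this union over $k\in N$ collapses to its largest term $[\mathcal{A},W_{\min N}]$. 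As $\mathcal{A}$ ranges over $\alpha$ and $\min N$ ranges over all of $\omega$ (each $m$ being realized by the cofinite set $N=\{k:k\geq m\}$), the family $\{\Delta_1B_{\mathcal{A},N}\}$ is exactly $\{[\mathcal{A},W_m]:\mathcal{A}\in\alpha,\ m<\omega\}=\bigcup_{m<\omega}[\alpha,W_m]$, which is a base of $\left[\alpha,\mathcal{N}(0)\right]$ by (\ref{Wn-erected}); equality follows.

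I do not expect a genuine obstacle so much as a need for care with the relational bookkeeping — keeping straight that $\Delta_2^{-}$ and $\Delta_3^{-}$ land on $\Delta$ whereas $\Delta_1$ lands on $C(X,\mathbb{R})$, and that taking the join \emph{before} projecting is what couples each family $\mathcal{A}\in\alpha$ to a common tail $\{k\in N\}$. Conceptually the whole identity rests on two points: the open-isotonicity reduction of the existential clause to membership $f^{-}(W_k)\in\mathcal{A}$, and the role of the cofinite filter $\mathcal{N}$, which supplies the ``eventually finer neighborhood of $0$'' quantifier matching the supremum $\bigvee_n$ in the definition of the erected filter, the nesting of the $W_k$ then reducing each union to a single bracket.
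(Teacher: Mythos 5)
Your proof is correct and takes essentially the same route as the paper's: both compute the pullbacks, form the join base $\Delta_2^{-}\mathcal{A}\vee\Delta_3^{-}N$, project by $\Delta_1$ to obtain $\bigcup_{k\in N}[\mathcal{A},W_k]$ via the openly-isotone identification, and collapse the union by the nesting $[\mathcal{A},W_k]\subseteq[\mathcal{A},W_n]$ for $k\geq n$, the only (trivial) difference being that the paper works directly with the tail base $\{k:k\geq n\}$ of $\mathcal{N}$ where you treat an arbitrary cofinite $N$ and use $\min N$. Your explicit isolation of the open-isotonicity step $\{f:\exists A\in\mathcal{A},\,A\subseteq f^{-}(W_k)\}=[\mathcal{A},W_k]$ is, if anything, slightly more careful than the paper's statement of $\Delta_2^{-}\mathcal{A}$.
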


\begin{proof}
If $\mathcal{A}\in \alpha $ then $\Delta _{2}^{-}\mathcal{A}=\left\{ \left(
f,A,k\right) :f^{-}(W_{k})\in \mathcal{A}\right\} $. If $n<\omega $ then $%
\Delta _{3}^{-}\left\{ k:k\geq n\right\} =\left\{ \left( f,A,k\right)
:\exists _{k\geq n}\;A\subseteq f^{-}(W_{k})\right\} $. Hence%
\begin{equation*}
\Delta _{2}^{-}\mathcal{A}\vee \Delta _{3}^{-}\left\{ k:k\geq n\right\}
=\bigcup_{k\geq n}\left\{ \left( f,A,k\right) :f^{-}(W_{k})\in 
\mathcal{A}\right\} ,
\end{equation*}%
and thus $\Delta _{1}\left( \Delta _{2}^{-}\mathcal{A}\vee \Delta
_{3}^{-}\left\{ k:k\geq n\right\} \right) =\bigcup_{k\geq n}\left\{
f:f^{-}(W_{k})\in \mathcal{A}\right\} =\bigcup_{k\geq n}[\mathcal{A}%
,W_{n}]$. Because $W_{k}\subseteq W_{n}$ if $k\geq n$, hence $[\mathcal{A}%
,W_{k}]\subseteq [\mathcal{A},W_{n}]$. Consequently,%
\begin{equation*}
\Delta _{1}(\Delta _{2}^{-}\alpha \vee \Delta _{3}^{-}\mathcal{N})=\left\{ [%
\mathcal{A},W_{n}]:\mathcal{A}\in \alpha ,n<\omega \right\} =[\alpha ,\left(
W_{n}\right) _{n}].
\end{equation*}
\end{proof}

\begin{corollary}
If $\mathbb{B}$ is an $\mathbb{F}_{0}$-composable and $\mathbb{F}_{1}$%
-steady class of filters and $\alpha \in \mathbb{B}$ then $\left[ \alpha ,%
\mathcal{N}(0)\right] \in \mathbb{B}.$
\end{corollary}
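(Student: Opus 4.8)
The plan is to read $[\alpha,\mathcal{N}(0)]$ off the relational presentation supplied by Proposition~\ref{prop:constr-rel}, namely
\begin{equation*}
[\alpha,\mathcal{N}(0)]=\Delta_{1}\bigl(\Delta_{2}^{-}\alpha\vee\Delta_{3}^{-}\mathcal{N}\bigr),
\end{equation*}
and then to propagate membership in $\mathbb{B}$ through this expression one operation at a time. The three facts that drive the argument are: each projection $\Delta_{j}$ (hence each $\Delta_{j}^{-}$) is a single relation, so the principal filter it generates lies in $\mathbb{F}_{0}$; the cofinite filter $\mathcal{N}$ is countably based, so $\mathcal{N}\in\mathbb{F}_{1}$; and $\mathbb{B}$ is, by hypothesis, closed under exactly the two operations appearing in the formula, being $\mathbb{F}_{0}$-composable and $\mathbb{F}_{1}$-steady.

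Concretely, I would proceed in four steps. First, apply $\mathbb{F}_{0}$-composability of $\mathbb{B}$ to the principal relation $\Delta_{2}^{-}$ and the filter $\alpha\in\mathbb{B}$ to obtain $\Delta_{2}^{-}\alpha\in\mathbb{B}$. Second, observe that $\Delta_{3}^{-}\mathcal{N}\in\mathbb{F}_{1}$: since $\mathcal{N}\in\mathbb{F}_{1}$, $\Delta_{3}^{-}$ is principal, and $\mathbb{F}_{1}$ is itself $\mathbb{F}_{0}$-composable (it is among the classes $\mathbb{F}_{n}$, $n\geq1$, noted to have this property), the family $\{\Delta_{3}^{-}\{k\geq n\}:n<\omega\}$ is a countable base of $\Delta_{3}^{-}\mathcal{N}$. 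Third, invoke $\mathbb{F}_{1}$-steadiness of $\mathbb{B}$ with $\mathcal{D}=\Delta_{2}^{-}\alpha\in\mathbb{B}$ and $\mathcal{J}=\Delta_{3}^{-}\mathcal{N}\in\mathbb{F}_{1}$ to conclude $\Delta_{2}^{-}\alpha\vee\Delta_{3}^{-}\mathcal{N}\in\mathbb{B}$. Fourth, apply $\mathbb{F}_{0}$-composability once more, now to the principal relation $\Delta_{1}$ and the supremum just produced, reaching $\Delta_{1}(\Delta_{2}^{-}\alpha\vee\Delta_{3}^{-}\mathcal{N})=[\alpha,\mathcal{N}(0)]\in\mathbb{B}$.

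The only point requiring care is the meshing hypothesis $\Delta_{2}^{-}\alpha\,\#\,\Delta_{3}^{-}\mathcal{N}$ demanded by the definition of steadiness in the third step, and I expect it to be the sole genuine obstacle. I would dispatch it by the standing convention that every class contains all degenerate filters: if the two filters mesh, steadiness applies verbatim; if they do not, their supremum is degenerate and so belongs to $\mathbb{B}$ for free. In fact meshing always holds, for Proposition~\ref{prop:constr-rel} identifies $\Delta_{1}(\Delta_{2}^{-}\alpha\vee\Delta_{3}^{-}\mathcal{N})$ with the genuine erected filter $[\alpha,\mathcal{N}(0)]=\bigvee_{n<\omega}[\alpha,W_{n}]$ of (\ref{Wn-erected}), which is proper (each $[\mathcal{A},W_{n}]$ contains $\bar{0}$, since $\mathcal{A}$ is openly isotone and hence contains $X=\bar{0}^{-}(W_{n})$); as $\Delta_{1}$ carries a degenerate filter to a degenerate one, the supremum itself cannot be degenerate, so $\Delta_{2}^{-}\alpha$ and $\Delta_{3}^{-}\mathcal{N}$ mesh.
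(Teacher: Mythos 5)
Your proposal is correct and is exactly the argument the paper intends: the corollary is stated without proof precisely because it follows from Proposition~\ref{prop:constr-rel} by propagating $\alpha\in\mathbb{B}$ through $\Delta_{1}(\Delta_{2}^{-}\alpha\vee\Delta_{3}^{-}\mathcal{N})$ via $\mathbb{F}_{0}$-composability (for $\Delta_{2}^{-}$ and $\Delta_{1}$) and $\mathbb{F}_{1}$-steadiness (against the countably based $\Delta_{3}^{-}\mathcal{N}$), just as you do. Your extra care over the meshing hypothesis --- covered either by the paper's convention that every class contains the degenerate filters, or by noting that $[\alpha,\mathcal{N}(0)]$ is proper since $\bar{0}\in[\mathcal{A},W_{n}]$ for every nonempty openly isotone $\mathcal{A}$ --- fills in the one detail the paper leaves tacit.
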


Consider for each $n,$ the relation $[\cdot ,W_{n}]:C(X,\$)\rightarrow C(X,%
\mathbb{R})$. Note that the filter $\bigvee_{n<\omega }[\alpha
_{n},W_{n}]$ of Lemma \ref{lem:liftsequence} is the supremum of the images
of the filters $\alpha _{n}$ under this relation. A class $\mathbb{B}$ of
filters is \emph{countably upper closed }if it is closed under countable
suprema of increasing sequences. In particular:

\begin{proposition}
\label{prop:Bbased} If $\mathbb{B}$ is an $\mathbb{F}_{0}$-composable and
countably upper closed class of filters, and if each $\alpha _{n}\in \mathbb{%
B}$, then $\bigvee_{n<\omega }[\alpha _{n},W_{n}]\in \mathbb{B}$.
\end{proposition}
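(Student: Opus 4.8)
The plan is to combine the relational reconstruction from Proposition~\ref{prop:constr-rel} with the closure properties of $\mathbb{B}$ assumed in the hypotheses. Recall that by Lemma~\ref{lem:liftsequence}, the filter $\bigvee_{n<\omega}[\alpha_n,W_n]$ exists, and by the remark following that lemma it equals the supremum of the images of the filters $\alpha_n$ under the relations $[\cdot,W_n]$. The idea is to realize this entire construction as a single composition of relations applied to the filters $\alpha_n$, so that $\mathbb{F}_0$-composability handles the image steps and countable upper closure handles the supremum.

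First I would fix, for each $n<\omega$, the relation $[\cdot,W_n]\subseteq C(X,\$)\times C(X,\mathbb{R})$, which is a principal (hence $\mathbb{F}_0$) relation. By $\mathbb{F}_0$-composability applied to each $\alpha_n\in\mathbb{B}$, the image filter $[\alpha_n,W_n]=[\cdot,W_n]\alpha_n$ belongs to $\mathbb{B}(C(X,\mathbb{R}))$. This is the analogue, for a single index $n$, of the argument behind Proposition~\ref{prop:constr-rel}: there the whole erected filter was obtained as $\Delta_1(\Delta_2^-\alpha\vee\Delta_3^-\mathcal{N})$, but for the present statement the sequence $(\alpha_n)$ is no longer a single filter, so I would treat the images one index at a time rather than packaging them into one composition with the cofinite filter $\mathcal{N}$.

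Next I would verify that the sequence $([\alpha_n,W_n])_{n<\omega}$ is increasing, so that countable upper closure applies. This is exactly where the monotonicity already recorded in the excerpt is used: since $W_{n+1}\subseteq W_n$ and each $\alpha_n$ is openly isotone, one gets $[\alpha_n,W_n]\le[\alpha_{n+1},W_{n+1}]$ in the relevant sense (this is the observation made just before~(\ref{Wn-erected}), that $[\alpha,W_n]\le[\alpha,W_{n+1}]$ for a fixed filter, together with the directedness exploited in Lemma~\ref{lem:liftsequence}). Once the sequence of images is seen to be increasing, the hypothesis that $\mathbb{B}$ is countably upper closed — closed under countable suprema of increasing sequences — yields immediately that $\bigvee_{n<\omega}[\alpha_n,W_n]\in\mathbb{B}$, which is the desired conclusion.

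The main obstacle I anticipate is the monotonicity/increasingness step rather than the composability step, because the indices $n$ enter \emph{both} through the shrinking neighborhoods $W_n$ and through potentially different filters $\alpha_n$; I would need to confirm that the $\alpha_n$ interact correctly with the increasing structure, or else replace $\alpha_n$ by $\bigvee_{k\le n}\alpha_k$ (or an analogous tail adjustment) to force the sequence of erected filters to be genuinely increasing before invoking countable upper closure. If the $\alpha_n$ are not assumed comparable, the cleanest route is to note that countable upper closure refers to increasing sequences, and the supremum $\bigvee_{n<\omega}[\alpha_n,W_n]$ coincides with $\bigvee_{n<\omega}\bigl(\bigvee_{k\le n}[\alpha_k,W_k]\bigr)$, whose partial suprema form an increasing sequence of members of $\mathbb{B}$ by $\mathbb{F}_0$-composability and finite joins; this reduction is the only delicate point, and everything else is a direct application of the two stated closure properties.
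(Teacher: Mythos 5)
Your main line coincides with the paper's own (tacit) proof: the proposition is stated there without a separate argument, as an immediate consequence of the two remarks preceding it --- each $[\alpha _{n},W_{n}]$ is the image of $\alpha _{n}$ under the principal relation $\left\{ (A,f):A\subseteq f^{-}(W_{n})\right\} $ (using that the $\alpha _{n}$ are based in openly isotone families), hence lies in $\mathbb{B}$ by $\mathbb{F}_{0}$-composability, and the supremum, which exists by Lemma~\ref{lem:liftsequence}, is then absorbed by countable upper closure. Your decision to treat the images one index at a time, rather than packaging everything into a single composition with the cofinite filter as in Proposition~\ref{prop:constr-rel}, is exactly what the paper does.

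Two caveats about your discussion of monotonicity. First, your second paragraph asserts $[\alpha _{n},W_{n}]\leq \lbrack \alpha _{n+1},W_{n+1}]$ from $W_{n+1}\subseteq W_{n}$ and open isotonicity alone; by (\ref{eq:finer}) this also requires $\alpha _{n}\leq \alpha _{n+1}$, which is not assumed and genuinely fails in the intended application (in Corollary~\ref{cor:downtoup}~(1) the filters $\mathcal{B}_{n}$ are chosen independently for each $n$). You recognize this in your last paragraph, but the repair proposed there does not work as stated: the partial suprema $\bigvee_{k\leq n}[\alpha _{k},W_{k}]$ are not known to belong to $\mathbb{B}$, because closure under finite joins is not among the hypotheses, and $\mathbb{F}_{0}$-composability transforms a \emph{single} filter through a principal relation, so it cannot manufacture the join of images of several mutually unrelated filters (nor is $\bigvee_{k\leq n}\alpha _{k}$ available --- the $\alpha _{k}$ need not mesh, and $\mathbb{F}_{1}$-steadiness is not assumed in this proposition). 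That said, you have put your finger on a looseness present in the paper itself, which applies ``countably upper closed'' to a sequence that need not be increasing; in all of the paper's applications the classes used (e.g., $\mathbb{F}_{\lambda }$) are closed under arbitrary countable suprema, so nothing breaks there, but your reduction to the increasing case is not a proof from the stated hypotheses, and you should either strengthen the reading of ``countably upper closed'' to arbitrary countable suprema or add closure under finite joins as an explicit assumption.
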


Let $\mathcal{W}:=\left\{ W_{n}\right\} $ be a fixed base of $\mathcal{N}(0)$
in $\mathbb{R}$. Define%
\begin{equation}
\mathcal{F}^{\mathcal{N}(0)}:=\bigvee_{n<\omega }[\mathcal{F}%
^{-}(W_{n}),W_{n}],  \label{eq:FW}
\end{equation}%
on $C(X,\mathbb{R}),$ associated with a filter $\mathcal{F}$ on $C(X,\mathbb{%
R})$. As $F\subseteq \lbrack F^{-}(W),W]$ for every $F\subset C(X,\mathbb{R})
$ and $W\subset \mathbb{R}$,%
\begin{equation}
\mathcal{F}^{\mathcal{N}(0)}\leq \mathcal{F}.  \label{eq:FWbelow}
\end{equation}

\begin{proposition}
For each symmetric open intervals $V,W$ that contain $0$, there is a
strictly increasing linear map $h$ such that%
\begin{equation*}
\lbrack \mathcal{F}^{-}(W),W]=h\left( [\mathcal{F}^{-}(V),V]\right) .
\end{equation*}
\end{proposition}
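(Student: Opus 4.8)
The natural candidate is the linear rescaling that matches the two windows. Writing $V=(-a,a)$ and $W=(-b,b)$ with $a,b>0$, I would set $h(t):=\tfrac{b}{a}\,t$. Then $h$ is a strictly increasing linear self-homeomorphism of $\mathbb{R}$ with $h(V)=W$, equivalently $h^{-}(W)=V$. The first thing to record is how $h$ transports preimages: since $h$ is bijective, $(h\circ g)^{-}(W)=g^{-}(h^{-}(W))=g^{-}(V)$ for every $g\in C(X,\mathbb{R})$. Thus the lower conjugate $h_{\ast}\colon C(X,\mathbb{R})\to C(X,\mathbb{R})$, $h_{\ast}(g)=h\circ g$, is a bijection carrying the window-$V$ preimage of $g$ to the window-$W$ preimage of $h_{\ast}(g)$; this is the analogue of $\left(\ref{eq:adjoint}\right)$ for the general target $W$.

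The engine of the argument is an intertwining relation between $h_{\ast}$ and the erection operator $[\,\cdot\,,\,\cdot\,]$. For any $\mathcal{A}\subseteq C(X,\$)$ I claim $h_{\ast}([\mathcal{A},V])=[\mathcal{A},W]$: indeed $g\in[\mathcal{A},V]$ means $g^{-}(V)\in\mathcal{A}$, which by the preimage identity above is exactly $(h_{\ast}g)^{-}(W)\in\mathcal{A}$, i.e. $h_{\ast}g\in[\mathcal{A},W]$, and bijectivity of $h_{\ast}$ upgrades this equivalence to an equality of sets. Passing to filters via the definition $\left(\ref{W-erected}\right)$ of the $W$-erected filter and the monotonicity $\left(\ref{eq:finer}\right)$, I get $h_{\ast}([\alpha,V])=[\alpha,W]$ for every filter $\alpha$ on $C(X,\$)$. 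Specializing to $\alpha=\mathcal{F}^{-}(V)$ produces $h_{\ast}\bigl([\mathcal{F}^{-}(V),V]\bigr)=[\mathcal{F}^{-}(V),W]$, so the statement reduces to identifying $[\mathcal{F}^{-}(V),W]$ with $[\mathcal{F}^{-}(W),W]$.

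The last reduction is exactly where I expect the real difficulty to lie, and it is the step I would scrutinize most carefully. One inclusion is immediate: from $V\subseteq W$ and the monotonicity $\left(\ref{eq:monotony2}\right)$ one has $\mathcal{F}^{-}(V)\leq\mathcal{F}^{-}(W)$, whence $\left(\ref{eq:finer}\right)$ (applied with a common erecting window $W$) yields $[\mathcal{F}^{-}(V),W]\leq[\mathcal{F}^{-}(W),W]$. The reverse comparison is the crux: it requires that, with the erecting window already fixed at $W$, enlarging the first-coordinate window from $V$ to $W$ inside $\mathcal{F}^{-}(\cdot)$ does not genuinely refine the filter. Concretely, I would try to show that each generator $[\mathcal{O}_{X}(F^{-}(V)),W]$ of the coarser filter already refines some generator $[\mathcal{O}_{X}(F'^{-}(W)),W]$ of the finer one. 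This is precisely the place where the standing convention of identifying a filter $\gamma$ with its upper regularization $\mathcal{O}_{X}^{\natural}(\gamma)$ must be invoked, so that the discrepancy between the $V$- and $W$-preimages is absorbed into the openly isotone hulls $\mathcal{O}_{X}(\cdot)$; the bookkeeping with $h_{\ast}$ in the first two paragraphs is routine by comparison, and I would budget the bulk of the effort for making this regularization step precise.
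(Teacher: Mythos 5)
Your first two paragraphs are correct and follow, in essence, the same skeleton as the paper's own proof: the paper also takes $h$ strictly increasing linear with $h(V)=W$, observes $\left( h\circ g\right) ^{-}(W)=g^{-}(V)$, and transports generators; your intertwining identity $h_{\ast }\left( [\mathcal{A},V]\right) =[\mathcal{A},W]$ is a careful version of that computation, and it validly reduces the proposition to the identity $[\mathcal{F}^{-}(V),W]=[\mathcal{F}^{-}(W),W]$, of which the inequality $[\mathcal{F}^{-}(V),W]\leq \lbrack \mathcal{F}^{-}(W),W]$ indeed follows from (\ref{eq:monotony2}) and (\ref{eq:finer}).

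The step you postponed, however, is a genuine gap, and no bookkeeping with the hulls $\mathcal{O}_{X}^{\natural }(\cdot )$ can close it, because the reverse inequality fails for general $\mathcal{F}$. It would require, for each $F\in \mathcal{F}$, some $F^{\prime }\in \mathcal{F}$ such that $u^{-}(W)\supseteq f^{\prime -}(V)$ for some $f^{\prime }\in F^{\prime }$ forces $u^{-}(W)\supseteq f^{-}(W)$ for some $f\in F$. Let $X$ be a singleton, so that $C(X,\mathbb{R})\cong \mathbb{R}$ and $C(X,\$)=\{\varnothing ,X\}$; let $V=(-\tfrac{1}{2},\tfrac{1}{2})$, $W=(-1,1)$, and let $\mathcal{F}$ be the principal filter of the constant function $c$ with value $\tfrac{3}{4}$. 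Then $c^{-}(V)=\varnothing $, whose openly isotone hull is all of $C(X,\$)$, so $[\mathcal{F}^{-}(V),V]$ is the trivial filter $\{C(X,\mathbb{R})\}$, as is its image under any bijection $h_{\ast }$ (without the hull convention its generator $\{g:g^{-}(V)=\varnothing \}\cong \mathbb{R}\setminus V$ is unbounded, and so are all its images under increasing linear maps); whereas $c^{-}(W)=X$, so $[\mathcal{F}^{-}(W),W]$ contains the bounded proper set $[\{X\},W]=\{g:g^{-}(W)=X\}\cong W$. Hence no strictly increasing linear $h$ realizes the claimed equality, and since your steps $1$--$3$ are exact equalities, the proposition itself fails for arbitrary $\mathcal{F}$: some hypothesis tying $F^{-}(V)$ to $F^{-}(W)$ is needed. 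You should also know that the paper's one-line proof passes over exactly the point you isolated: its asserted equivalence $g\in G_{F}(V)\Longleftrightarrow h\circ g\in G_{F}(W)$ amounts, after applying $\left( h\circ g\right) ^{-}(W)=g^{-}(V)$, to identifying the inner family $\{f^{-}(V):f\in F\}$ with $\{f^{-}(W):f\in F\}$, which is your unproven crux in disguise. So your instinct about where the difficulty sits was exactly right; what is missing is the realization that, at this level of generality, the step is not merely delicate but false.
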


\begin{proof}
A base of $[\mathcal{F}^{-}(W),W]$ is of the form%
\begin{equation*}
G_{F}(W):=\left\{ g:g^{-}(W)\in \left\{ f^{-}(W):f\in F\right\} \right\}
:F\in \mathcal{F}
\end{equation*}%
is a base of $[\mathcal{F}^{-}(W),W]$. Let $h$ be a strictly increasing
(linear) map such that $h(V)=W$. Then if $\left( h\circ g\right)
^{-}(W)=g^{-}(V)$. Therefore $g\in G_{F}(V)$ if and only if $h\circ g\in
G_{F}(W)$, that is, $G_{F}(W)=h\left( G_{F}(V)\right) .$
\end{proof}

It follows that, if $W_{n}=r_{n}W$, where $W:=\left( -1,1\right) $ and $%
\left\{ r_{n}\right\} _{n}$ is a decreasing sequence tending to $0$, then%
\begin{equation*}
\mathcal{F}^{\mathcal{N}(0)}=\bigvee_{n<\omega }r_{n}\mathcal{H},
\end{equation*}%
where $\mathcal{H}:=[\mathcal{F}^{-}(W),W]$.

\begin{corollary}
\label{cor:downtoup} Let $\mathbb{B}$ be a class of filters.

\begin{enumerate}
\item If $\mathbb{B}$ is $\mathbb{F}_{0}$-composable and countably upper
closed, and $\tau $ is $\mathbb{B}$-based at $X$, then $\tau ^{\Uparrow }$
is $\mathbb{B}$-based at $\overline{0}$.

\item If $\mathbb{B}$ is $\mathbb{F}_{0}$-composable and $\mathbb{F}_{1}$%
-steady, and $\tau $ is pretopology that is $\mathbb{B}$-based at $X$, then $%
\tau ^{\Uparrow }$ is $\mathbb{B}$-based at $\overline{0}$.
\end{enumerate}
\end{corollary}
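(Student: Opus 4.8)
The plan is to handle both parts by the same template: start from a filter $\mathcal{F}$ on $C(X,\mathbb{R})$ with $\bar{0}\in\lm_{\tau^{\Uparrow}}\mathcal{F}$ and manufacture a coarser filter lying in $\mathbb{B}$ that still converges to $\bar{0}$. By Lemma~\ref{lem:conv-at-zero} the hypothesis is equivalent to $X\in\lm_{\tau}\mathcal{F}^{-}(W_n)$ for every $n<\omega$, and since $\tau$ is upper regular I identify each $\mathcal{F}^{-}(W_n)$ with its regularization $\mathcal{O}_X^{\natural}(\mathcal{F}^{-}(W_n))$, so that these filters are openly isotone. In both cases the filter I build is of the form $\bigvee_{n<\omega}[\beta_n,W_n]$ with each $\beta_n\in\mathbb{B}$ openly isotone and $\beta_n\leq\mathcal{F}^{-}(W_n)$; it is then automatically below $\mathcal{F}$ via the chain $\bigvee_{n<\omega}[\beta_n,W_n]\leq\bigvee_{n<\omega}[\mathcal{F}^{-}(W_n),W_n]=\mathcal{F}^{\mathcal{N}(0)}\leq\mathcal{F}$, where the first inequality is the monotonicity (\ref{eq:finer}) of erected filters, the equality is (\ref{eq:FW}), and the last step is (\ref{eq:FWbelow}).

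For part (1) I would, for each $n$, apply $\mathbb{B}$-basedness of $\tau$ at $X$ to $\mathcal{F}^{-}(W_n)$ to get $\mathcal{D}_n\in\mathbb{B}$ with $\mathcal{D}_n\leq\mathcal{F}^{-}(W_n)$ and $X\in\lm_{\tau}\mathcal{D}_n$. To make $\mathcal{D}_n$ openly isotone I replace it by $\mathcal{O}_X^{\natural}(\mathcal{D}_n)$: this is coarser than $\mathcal{F}^{-}(W_n)$, still converges to $X$ by upper regularity, and remains in $\mathbb{B}$ because $\mathcal{O}_X^{\natural}$ is composition with the fixed inclusion relation $R=\{(G,O):G\subseteq O\}$ on $C(X,\$)$, so $\mathbb{F}_0$-composability applies. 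Theorem~\ref{thm:anWn} then gives $\bar{0}\in\lm_{\tau^{\Uparrow}}\bigvee_{n<\omega}[\mathcal{D}_n,W_n]$, while Proposition~\ref{prop:Bbased} — this is exactly where \emph{countably upper closed} is used — yields $\bigvee_{n<\omega}[\mathcal{D}_n,W_n]\in\mathbb{B}$. The template chain finishes the argument.

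For part (2) I would avoid countable suprema and instead exploit that $\tau$ is a pretopology, so that the $\tau$-vicinity filter $\mathcal{N}_{\tau}(X)$ of the point $X$ converges to $X$ and is the coarsest such filter; hence $\mathcal{N}_{\tau}(X)\leq\mathcal{F}^{-}(W_n)$ for every $n$. Applying $\mathbb{B}$-basedness at $X$ to $\mathcal{N}_{\tau}(X)$ produces a \emph{single} $\alpha\in\mathbb{B}$ with $\alpha\leq\mathcal{N}_{\tau}(X)$ and $X\in\lm_{\tau}\alpha$, which after regularizing to $\mathcal{O}_X^{\natural}(\alpha)$ (openly isotone, still in $\mathbb{B}$, still convergent) satisfies $\alpha\leq\mathcal{F}^{-}(W_n)$ for all $n$. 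Then Corollary~\ref{cor:Falpha} gives $\bar{0}\in\lm_{\tau^{\Uparrow}}[\alpha,\mathcal{N}(0)]$, the Corollary following Proposition~\ref{prop:constr-rel} (using $\mathbb{F}_0$-composability and $\mathbb{F}_1$-steadiness through the representation $[\alpha,\mathcal{N}(0)]=\Delta_1(\Delta_2^{-}\alpha\vee\Delta_3^{-}\mathcal{N})$) gives $[\alpha,\mathcal{N}(0)]\in\mathbb{B}$, and since $[\alpha,\mathcal{N}(0)]=\bigvee_{n<\omega}[\alpha,W_n]$ with $\alpha\leq\mathcal{F}^{-}(W_n)$, the template chain again places it below $\mathcal{F}$.

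The step I expect to be the main obstacle is keeping the two demands — membership in $\mathbb{B}$ and coarseness below $\mathcal{F}$ — simultaneously satisfiable, and it is precisely here that the two hypothesis packages become forced. In (1) the refinements $\mathcal{D}_n$ are chosen independently for each $W_n$, so one must close $\mathbb{B}$ under the resulting increasing countable supremum; in (2) a single refinement must be threaded below every $\mathcal{F}^{-}(W_n)$ at once, which a pretopology makes possible by furnishing a coarsest convergent filter, and then $\mathbb{F}_1$-steadiness (rather than countable closure) absorbs the countable erection. A secondary but essential piece of bookkeeping is the openly-isotone normalization, without which Theorem~\ref{thm:anWn} and Corollary~\ref{cor:Falpha} do not directly apply; this is dispatched uniformly by upper regularity together with $\mathbb{F}_0$-composability.
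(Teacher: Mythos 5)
Your proof is correct and takes essentially the same route as the paper's: part (1) is the paper's argument verbatim (Lemma \ref{lem:conv-at-zero}, then Theorem \ref{thm:anWn} and Proposition \ref{prop:Bbased}, then the comparison $\bigvee_{n<\omega}[\mathcal{B}_n,W_n]\leq \mathcal{F}^{\mathcal{N}(0)}\leq \mathcal{F}$), and in part (2) your single filter $\alpha$ is necessarily the vicinity filter $\mathcal{V}_{\tau}(X)$ (pretopologicity forces $\alpha\geq\mathcal{V}_{\tau}(X)$, hence equality), which is exactly the paper's choice $\mathcal{B}_n=\mathcal{V}_{\tau}(X)$ before invoking Proposition \ref{prop:constr-rel}. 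Your explicit openly-isotone normalization via $\mathcal{O}_X^{\natural}$ and $\mathbb{F}_0$-composability is sound bookkeeping that the paper handles implicitly through its stated convention of identifying a filter with its upper regularization.
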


\begin{proof}
1. If $\overline{0}\in \lm_{\tau ^{\Uparrow }}\mathcal{F}$ then $X\in
\lm_{\tau }\mathcal{F}^{-}(W_{n})$ for each $n.$ Therefore, for each $n,$
there is $\mathcal{B}_{n}\in \mathbb{B}$ with $X\in \lm_{\tau }\mathcal{B}%
_{n}$ and $\mathcal{B}_{n}\leq \mathcal{F}^{-}(W_{n}).$ In view of Theorem %
\ref{thm:anWn}, $\overline{0}\in \lm_{\tau ^{\Uparrow }}\bigvee_{n<\omega }[%
\mathcal{B}_{n},W_{n}]$. By Proposition \ref{prop:Bbased}, $%
\bigvee_{n<\omega }[\mathcal{B}_{n},W_{n}]\in \mathbb{B}.$ Moreover,%
\begin{equation*}
\bigvee_{n<\omega }[\mathcal{B}_{n},W_{n}]\leq \mathcal{F}^{\mathcal{N}%
(0)}\leq \mathcal{F}\text{,}
\end{equation*}%
which concludes the proof.

2. If $\tau $ is pretopological, then in the proof above, for each $n$ we
can take $\mathcal{B}_{n}=\mathcal{V}_{\tau }(X),$ so that $%
\bigvee_{n<\omega }[\mathcal{B}_{n},W_{n}]=\left[ \mathcal{V}_{\tau }(X),%
\mathcal{N}(0)\right] .$ By Proposition \ref{prop:constr-rel}, $\left[ 
\mathcal{V}_{\tau }(X),\mathcal{N}(0)\right] \in \mathbb{B}$.
\end{proof}

A filter $\alpha $ on $C(X,\$)$ valued in openly isotone families, can be
reconstructed from its erected filter $\left[ \alpha ,\mathcal{N}(0)\right] $
with the aid of compositions of relations, provided that a separation
condition by real-valued continuous functions holds. A family $\mathcal{A}=%
\mathcal{O}_{X}\mathcal{(A)}$ is \emph{functionally separated }if for every $%
O\in \mathcal{A}$, there is $A\in \mathcal{A}$ and $h\in C(X,[0,1])$ such
that $h(A)=\{0\}$ and $h(X\setminus O)=\{1\}$. A hyperfilter is called \emph{%
functionally separated }if it admits a base of functionally separated
hypersets. A solid hyperconvergence on $C(X,\$)$ is \emph{functionally
separated }if whenever $Y\in \lm \gamma $ then there exists $\alpha \leq
\gamma $ such that $Y\in \lm \alpha $ and $\alpha $ is functionally
separated.

It follows from \cite[Lemma 2.5]{DM_Isbell} that compact families on a
completely regular space are functionally separated. Therefore if $\alpha
\subseteq \kappa (X)$ then $\alpha (X,\$)$ is functionally separated.

\begin{lemma}
\label{lem:separation} If $X$ is normal, then $[X,\$]$ is functionally
separated. Moreover, for each bounded open neighborhood $W$ of $0$ in $%
\mathbb{R}$, $[X,\$]$ has a base of filters $\alpha $ such that 
\begin{equation*}
\alpha =\mathcal{O}^{\sharp }\left( \alpha ^{\Downarrow }\right) =\mathcal{O}%
^{\sharp }\left( \left( [\alpha ,\mathcal{N}(0)]^{-}(W)\right) ^{\Downarrow
}\right)
\end{equation*}%
where $\alpha ^{\Downarrow }$ is the reduced ideal of $\alpha $ (\ref%
{eq:reducedideal}).
\end{lemma}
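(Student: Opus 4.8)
The plan is to establish both assertions at once by constructing, for an arbitrary filter $\gamma$ with $Y\in\lm_{[X,\$]}\gamma$, a coarser filter $\alpha\le\gamma$ with $Y\in\lm_{[X,\$]}\alpha$ that is built from \emph{closed} hypersets (hence functionally separated), and then verifying that such $\alpha$ satisfy the two displayed identities. First I would invoke Proposition~\ref{prop:base} to get an ideal subbase $\mathcal{P}_{0}$ of \emph{open} sets with $\mathcal{O}^{\natural}(\mathcal{P}_{0})\le\gamma$ and $Y\in\lm_{[X,\$]}\mathcal{O}^{\natural}(\mathcal{P}_{0})$; by (\ref{crit-inh}), $\mathcal{P}_{0}$ covers $Y$. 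For each $y\in Y$ pick $P\in\mathcal{P}_{0}$ with $y\in P$ and, using that $X$ is completely regular, a map $h\in C(X,[0,1])$ with $h(y)=0$ and $h(X\setminus P)=\{1\}$; set $C_{y}:=\{x:h(x)\le 1/2\}$, a closed set with $y\in\mathrm{int}\,C_{y}\subseteq C_{y}\subseteq P$. Putting $\mathcal{P}:=\{C_{y}:y\in Y\}$ and $\alpha:=\mathcal{O}^{\sharp}(\mathcal{P}^{\cup})$, each $C\in\mathcal{P}^{\cup}$ is a finite union of the $C_y$, so $\mathcal{O}_{X}(C)=\bigcap_{i}\mathcal{O}_{X}(C_{y_{i}})$; since $C_{y}\subseteq P\in\mathcal{P}_{0}$ gives $\mathcal{O}_{X}(C_{y})\supseteq\mathcal{O}_{X}(P)\in\mathcal{O}^{\natural}(\mathcal{P}_{0})\subseteq\gamma$, we get $\mathcal{O}_{X}(C)\in\gamma$ and hence $\alpha\le\gamma$, while $Y\subseteq\bigcup_{y}\mathrm{int}\,C_{y}$ yields $Y\in\lm_{[X,\$]}\alpha$ by Corollary~\ref{cor:idealcover}. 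Each $C\in\mathcal{P}^{\cup}$ being closed, normality separates $C$ from $X\setminus O$ for every open $O\supseteq C$, and flattening the separating function shows $\mathcal{O}_{X}(C)$ is functionally separated; this proves the first assertion and exhibits the desired convergence base.

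Next I would verify $\alpha=\mathcal{O}^{\sharp}(\alpha^{\Downarrow})$. Since $X$ is regular, $\bigcap\mathcal{O}_{X}(C)=C$ for closed $C$, so each $C\in\mathcal{P}^{\cup}$ lies in $\alpha^{\Downarrow}$ and $\mathcal{O}_{X}(C)\in\mathcal{O}^{\sharp}(\alpha^{\Downarrow})$; as these $\mathcal{O}_{X}(C)$ generate $\alpha$, this gives $\alpha\le\mathcal{O}^{\sharp}(\alpha^{\Downarrow})$. Conversely, any element of $\alpha^{\Downarrow}$ is $\bigcap\mathcal{G}$ for some $\mathcal{G}\in\alpha$; since $\mathcal{G}\supseteq\mathcal{O}_{X}(C)$ for some base $C$, one has $\bigcap\mathcal{G}\subseteq C$ and thus $\mathcal{O}_{X}(\bigcap\mathcal{G})\supseteq\mathcal{O}_{X}(C)\in\alpha$, so every generator of $\mathcal{O}^{\sharp}(\alpha^{\Downarrow})$ already belongs to $\alpha$, giving the reverse inequality.

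The crux, and the step I expect to be the main obstacle, is $\alpha=\mathcal{O}^{\sharp}(\beta^{\Downarrow})$ for $\beta:=[\alpha,\mathcal{N}(0)]^{-}(W)$, where functional separation and regularity must cooperate; here $\beta$ has base $\{[\mathcal{A},W_{n}]^{-}(W):\mathcal{A}\in\alpha,\,n<\omega\}$. Fix a base family $\mathcal{A}=\mathcal{O}_{X}(C)$ of $\alpha$ and an index $n$ with $W_{n}\subseteq W$, available since $W$ is a bounded neighborhood of $0$; I claim $\bigcap[\mathcal{A},W_{n}]^{-}(W)=C$. For the inclusion $\supseteq$: any $f\in[\mathcal{A},W_{n}]$ satisfies $f^{-}(W_{n})\in\mathcal{O}_{X}(C)$, so $f^{-}(W)\supseteq f^{-}(W_{n})\supseteq C$, and every member of $[\mathcal{A},W_{n}]^{-}(W)$ contains $C$. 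For $\subseteq$: given open $O\supseteq C$, functional separation of $\mathcal{A}$ furnishes $A\supseteq C$ and $h\in C(X,[0,1])$ with $h(A)=\{0\}$ and $h(X\setminus O)=\{1\}$; choosing $M\notin W$ (possible as $W$ is bounded) and setting $f_{O}:=Mh$, one has $C\subseteq A\subseteq f_{O}^{-}(W_{n})$, so $f_{O}\in[\mathcal{A},W_{n}]$, while $f_{O}(X\setminus O)=\{M\}$ misses $W$, giving $C\subseteq f_{O}^{-}(W)\subseteq O$; hence $\bigcap[\mathcal{A},W_{n}]^{-}(W)\subseteq\bigcap_{O\supseteq C}f_{O}^{-}(W)\subseteq\bigcap_{O\supseteq C}O=C$ by regularity.

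With the claim in hand, $C=\bigcap[\mathcal{A},W_{n}]^{-}(W)\in\beta^{\Downarrow}$, so $\mathcal{O}_{X}(C)\in\mathcal{O}^{\sharp}(\beta^{\Downarrow})$ and, these generating $\alpha$, $\alpha\le\mathcal{O}^{\sharp}(\beta^{\Downarrow})$. Conversely each element of $\beta^{\Downarrow}$ is $\bigcap\mathcal{Q}$ with $\mathcal{Q}\supseteq[\mathcal{A},W_{n}]^{-}(W)$ for some base $\mathcal{A}=\mathcal{O}_{X}(C)$, whence $\bigcap\mathcal{Q}\subseteq\bigcap[\mathcal{A},W_{n}]^{-}(W)=C$ and $\mathcal{O}_{X}(\bigcap\mathcal{Q})\supseteq\mathcal{O}_{X}(C)\in\alpha$, so every generator of $\mathcal{O}^{\sharp}(\beta^{\Downarrow})$ lies in $\alpha$. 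Thus the two filters coincide, completing the proof. The delicate point throughout is that $W$ must be bounded in order to scale the separating functions $f_{O}=Mh$ so that $X\setminus O$ is pushed outside $W$, while regularity is exactly what collapses the intersection of open neighborhoods of the closed set $C$ back to $C$; functional separation supplies the functions realizing the prescribed preimages, which is what lets one recover $\alpha$ from its erected filter.
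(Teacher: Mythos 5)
Your argument is correct and follows essentially the same route as the paper's proof: a convergence base of filters $\mathcal{O}^{\natural }(\mathcal{P})$ where $\mathcal{P}$ is an ideal base of closed sets obtained from regularity, Urysohn separation from normality for functional separatedness, and a scaled separating function (your $f_{O}=Mh$ with $M\notin W$, the paper's $h$ with $h(x)=1+\sup W$) combined with regularity to collapse $\bigcap [\mathcal{O}_{X}(C),W_{n}]^{-}(W)$ to $C$ --- indeed you verify both inclusions of the displayed identities, which the paper's proof leaves partly implicit. The only blemish is the appeal to Corollary~\ref{cor:idealcover}, which is stated for families $\mathcal{P}\subseteq C(X,\$)$ of \emph{open} sets and so does not literally apply to your closed-set family $\mathcal{P}^{\cup }$; but since $y\in \mathrm{int}\,C_{y}$ and $\bigcap \mathcal{O}_{X}(C_{y})\supseteq C_{y}$, the convergence $Y\in \lm_{[X,\$]}\alpha $ follows directly from (\ref{crit-inh}), exactly as in the paper.
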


\begin{proof}
In view of (\ref{crit-inh}), if $O\in \lm_{\lbrack X,\$]}\alpha $ then for
each $x\in O$ there exists $\mathcal{A}_{x}\in \alpha $ such that $x\in 
\mathrm{\mathrm{int}}_{X}(\bigcap_{U\in \mathcal{A}_{x}}U)$. By regularity,
there is a closed neighborhood $V_{x}$ of $x$ such that $V_{x}\subseteq 
\mathrm{\mathrm{int}}_{X}(\bigcap_{U\in \mathcal{A}_{x}}U).$ As the family $%
\mathcal{P}:=\{\bigcup_{x\in S}V_{x}:S\in \lbrack O]^{<\infty }\}\ $is an
ideal base, $\mathcal{O}_{X}^{\natural }(\mathcal{P})$ is a filter-base on $%
C(X,\$);$ moreover, $\bigcap_{x\in S}\mathcal{A}_{x}\subseteq \mathcal{O}%
_{X}(\bigcup_{x\in S}V_{x})$ for each $S\in \lbrack O]^{<\infty }$.
Therefore $\alpha \geq \mathcal{O}_{X}^{\natural }(\mathcal{P})$ and $O\in
\lm_{\lbrack X,\$]}\mathcal{O}_{X}^{\sharp }(\mathcal{P}).$ Finally, since $%
\mathcal{P}$ consists of closed sets and $X$ is normal then $\mathcal{O}%
_{X}^{\natural }(\mathcal{P})$ is functionally separated, which completes
the proof.

As shown in the first part of the proof, $[X,\$]$ has a base composed of
filters $\alpha =\mathcal{O}_{X}^{\sharp }\left( \mathcal{P}\right) $ where $%
\mathcal{P}$ is an ideal base of closed sets. For each $P\in \mathcal{P}$,
each $n\in \mathbb{N}$ consider the corresponding element 
\begin{equation*}
R:=\bigcap_{f\in \left[ P,W_{n}\right] }f^{-}\left( W\right)
\end{equation*}%
of $\left( [\alpha ,\mathcal{N}(0)]^{-}(W)\right) ^{\Downarrow }$. Then $%
\mathcal{O}(P)\subseteq \mathcal{O}(R)$ so that $\alpha \geq \mathcal{O}%
\left( ^{\sharp }\left( [\alpha ,\mathcal{N}(0)]^{-}(W)\right) ^{\Downarrow
}\right) $. Indeed, if $\mathcal{O}(P)\nsubseteq \mathcal{O}(R)$ then $%
R\nsubseteq P$ and there is $x\in R\setminus P$. By complete regularity,
there is a continuous map $h\in C(X,\mathbb{R})$ such that $h(x)=1+\sup W$
and $h(P)=\{0\}$. Then $h\in \left[ P,W_{n}\right] $ but $h(R)\nsubseteq W$;
a contradiction.
\end{proof}

Let us call $\$$-\emph{compatible }a class $\mathbb{B}$ of filters
satisfying 
\begin{equation*}
\beta \in \mathbb{B}(C\left( X,\$\right) )\Longrightarrow \mathcal{O}%
^{\sharp }(\beta ^{\Downarrow })\in \mathbb{B}\left( C\left( X,\$\right)
\right) .
\end{equation*}

\begin{theorem}
\label{thm:transfer-compact} If $\alpha $ is a filter on $C(X,\$)$ and $W$
is an open bounded neighborhood of $0$, then $\alpha \leq \left[ \alpha ,%
\mathcal{N}(0)\right] ^{-}\left( W\right) $. If moreover $\alpha $ is
functionally separated, then 
\begin{equation*}
\alpha =\left[ \alpha ,\mathcal{N}(0)\right] ^{-}\left( W\right) .
\end{equation*}
\end{theorem}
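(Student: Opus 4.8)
The plan is to reduce both assertions to one computation of the base family $[\mathcal{A},W]^{-}(W)$ for $\mathcal{A}\in\alpha$, and then read off the comparison of filters. First note that $[\mathcal{A},W]$ belongs to the erected filter $[\alpha,\mathcal{N}(0)]$: for any bounded open neighborhood $V\subseteq W$ of $0$ one has $[\mathcal{A},V]\subseteq[\mathcal{A},W]$ by $\left(\ref{eq:finer}\right)$ (if $f^{-}(V)\in\mathcal{A}$ then $f^{-}(W)\supseteq f^{-}(V)\in\mathcal{A}$, and $\mathcal{A}$ is openly isotone), while $[\mathcal{A},V]\in[\alpha,\mathcal{N}(0)]$, so upward closure gives $[\mathcal{A},W]\in[\alpha,\mathcal{N}(0)]$. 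Hence $[\mathcal{A},W]^{-}(W)$ is a member of $[\alpha,\mathcal{N}(0)]^{-}(W)$, and unwinding the definitions,
\[
[\mathcal{A},W]^{-}(W)=\{f^{-}(W):f^{-}(W)\in\mathcal{A}\}\subseteq\mathcal{A}.
\]
As this holds for every $\mathcal{A}\in\alpha$, the filter $[\alpha,\mathcal{N}(0)]^{-}(W)$ is finer than $\alpha$, which is the first assertion (and requires no separation hypothesis).

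For the equality I would establish the reverse refinement after upper regularization, keeping in mind the standing convention identifying a filter on $C(X,\$)$ with its upper regularization and that $\alpha$ is openly isotone valued. Fix a functionally separated $\mathcal{A}\in\alpha$; such families form a base of $\alpha$. Given $O\in\mathcal{A}$, choose $A\in\mathcal{A}$ and $h\in C(X,[0,1])$ with $h(A)=\{0\}$ and $h(X\setminus O)=\{1\}$. Since $W$ is bounded, fix $c>\sup\{|t|:t\in W\}$ and set $f:=c\,h\in C(X,\mathbb{R})$. Then $f(A)=\{0\}\subseteq W$ while $f(X\setminus O)=\{c\}$ is disjoint from $W$, so the open set $f^{-}(W)$ satisfies $A\subseteq f^{-}(W)\subseteq O$. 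By open isotony $f^{-}(W)\in\mathcal{A}$, whence $f\in[\mathcal{A},W]$ and $f^{-}(W)\in[\mathcal{A},W]^{-}(W)$. Thus for every $O\in\mathcal{A}$ the family $[\mathcal{A},W]^{-}(W)$ contains a set wedged between $A$ and $O$, which together with the inclusion of the first paragraph yields $\mathcal{O}_X\big([\mathcal{A},W]^{-}(W)\big)=\mathcal{A}$.

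Consequently the upper regularizations of $[\alpha,\mathcal{N}(0)]^{-}(W)$ and of $\alpha$ share the base $\{\mathcal{A}:\mathcal{A}\in\alpha\text{ functionally separated}\}$, and the two filters coincide. The main obstacle is precisely this reverse refinement: the raw family $[\mathcal{A},W]^{-}(W)$ is a proper subfamily of $\mathcal{A}$ consisting only of functionally open sets, so literal equality is false, and one genuinely needs both the identification of filters with their upper regularizations and the functional separation hypothesis — its sole role being to produce, cofinally in $\mathcal{A}$, functionally open members of the form $f^{-}(W)$ that already lie in $\mathcal{A}$. The remaining points are routine: that $[\mathcal{A},W]$ indeed sits in the erected filter, and that $c$ may be chosen outside the bounded set $W$.
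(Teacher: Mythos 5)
Your proof is in substance the paper's own: the first inequality comes from the observation that $[\mathcal{A},\cdot\,]^{-}(W)\subseteq \mathcal{A}$ by open isotony (the paper phrases this with $[\mathcal{A},W_{n}]^{-}(W)$ for $W_{n}\subseteq W$, you with $[\mathcal{A},W]^{-}(W)$ directly — a cosmetic difference), and the equality comes from a functional-separation witness wedging a set of the form $f^{-}(W)$ between $A$ and $O$, read modulo the upper-regularization convention. Your reading that the equality only holds after identifying filters with their upper regularizations is the intended one: the paper's own proof concludes $G\in \mathcal{O}^{\sharp }\bigl( [\mathcal{A},W_{n}]^{-}(W)\bigr)$, not literal membership. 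Your scaling $f=ch$ with $c>\sup \{|t|:t\in W\}$ plays exactly the role of the paper's rescaled $h$ with $h(X\setminus G)=\{\sup W\}$.

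There is one loose joint in your final assembly. You conclude that the two filters coincide from the single computation $\mathcal{O}_{X}\bigl( [\mathcal{A},W]^{-}(W)\bigr) =\mathcal{A}$; but $[\mathcal{A},W]$ is merely a \emph{member} of the erected filter, not a base element. The filter $\left[ \alpha ,\mathcal{N}(0)\right] ^{-}(W)$ is generated by the smaller families $[\mathcal{A},W_{n}]^{-}(W)$, $n<\omega $, and the reverse refinement requires each $\mathcal{O}_{X}\bigl( [\mathcal{A},W_{n}]^{-}(W)\bigr) $ to contain a member of $\alpha $; what you have literally shown is that a certain \emph{coarser} subfilter regularizes to $\alpha $, which by monotonicity of regularization only re-proves $\alpha \leq \left[ \alpha ,\mathcal{N}(0)\right] ^{-}(W)$. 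Fortunately the gap closes in one line with your own witness: since $f(A)=\{0\}\subseteq V$ for \emph{every} neighborhood $V$ of $0$, open isotony gives $f\in \lbrack \mathcal{A},W_{n}]$ for every $n$, hence $f^{-}(W)\in \lbrack \mathcal{A},W_{n}]^{-}(W)$, and your wedge computation yields $\mathcal{O}_{X}\bigl( [\mathcal{A},W_{n}]^{-}(W)\bigr) =\mathcal{A}$ for all $n$. This is precisely how the paper runs the second half: its separating function $h$ is explicitly noted to belong to $[\mathcal{A},W_{n}]$ for each $n<\omega $, so that $\left[ \alpha ,W_{n}\right] ^{-}(W)\leq \alpha $ for every $n$, covering the whole base.
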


\begin{proof}
1. If $n$ is such that $W_{n}\subseteq W$, then $[\mathcal{A}%
,W_{n}]^{-}\left( W\right) \subseteq \mathcal{A}$ for each $\mathcal{A}\in
\alpha $. Indeed, if $G\in \lbrack \mathcal{A},W_{n}]^{-}\left( W\right) $
then there is $A\in \mathcal{A}$ and $f\in C(X,\mathbb{R})$ such that $%
G=f^{-}\left( W\right) $ and $f(A)\subseteq W_{n}$. As $W_{n}\subseteq W$,
we infer that $A\subseteq G,\ $so that $G\in \mathcal{A}$. Consequently $%
\alpha \leq \left[ \alpha ,\mathcal{N}(0)\right] ^{-}\left( W\right)
=\bigvee_{n<\omega }[\mathcal{A},W_{n}]^{-}\left( W\right) $.

2. If $G\in \mathcal{A}$ then, by the functional separation of $\mathcal{A}$%
, there is $A\in \mathcal{A}$ and $h\in C(X,\mathbb{R)}$ such that $%
h(A)=\left\{ 0\right\} $ and $h(X\setminus G)=\left\{ \sup W\right\} $.
Therefore, $h\in \lbrack \mathcal{A},W_{n}]$ for each $n<\omega $, and $%
h^{-}(W)\subseteq G$ so that $G\in \mathcal{O}^{\sharp }\left( [\mathcal{A}%
,W_{n}]^{-}\left( W\right) \right) ,$ hence $\left[ \alpha ,W_{n}\right]
^{-}\left( W\right) \leq \alpha $ for each $n<\omega $.
\end{proof}

In particular, $\alpha \leq \bigwedge_{n<\omega }\left[ \alpha ,%
\mathcal{N}(0)\right] ^{-}\left( W_{n}\right) $ and if if $\alpha $ is
functionally separated, then the equality holds. On the other hand, if $%
\alpha $ is an ultrafilter then $\alpha =\left[ \alpha ,\mathcal{N}(0)\right]
^{-}\left( W\right) $ for any open bounded neighborhood of $0$.

Consider the function $W_{\ast }:C(X,\mathbb{R})\rightarrow C(X,\$)$
(defined by (\ref{eq:adjoint})). It follows from Theorem \ref%
{thm:transfer-compact} that if $\alpha $ is functionally separated, then $%
\alpha =W_{\ast }\left[ \alpha ,\mathcal{N}(0)\right] $, that is, $\alpha $
is the image of $\left[ \alpha ,\mathcal{N}(0)\right] $ by a relation. This
observation constitutes a considerable simplification of a construction
proposed in \cite{francis.Cp} for the finite-open topologies and extended to 
$\alpha $-topologies (\footnote{%
where $\alpha $ is a collection of compact families including all the
finitely generated ones}) in \cite{D.pannonica}.

If $\mathbb{B}$ is a class of filters, let $\mathbb{B}^{^{\bigwedge }}$
denote the class of filters than can be represented as an infimum of filters
of the class $\mathbb{B}$.

\begin{corollary}
\label{cor:uptodown} Let $\mathbb{B}$ be an $\mathbb{F}_{0}$-composable
class of filters.

\begin{enumerate}
\item Let $\tau $ be a functionally separated solid hyperconvergence. If $%
\tau ^{\Uparrow }$ is $\mathbb{B}$-based at $\overline{0}$ then $\tau $ is $%
\mathbb{B}$-based at $X$.

\item If $\tau ^{\Uparrow }$ is $\mathbb{B}$-based at $\overline{0}$ then $%
P\tau $ is $\mathbb{B}^{^{\bigwedge }}$-based at $X.$

\item If $\mathbb{B}$ is $\$$-compatible and $[X,\mathbb{R}]$ is $\mathbb{B}$%
-based at $\overline{0}$, then $[X,\$]$ is $\mathbb{B}$-based at $X$.
\end{enumerate}
\end{corollary}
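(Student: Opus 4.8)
The plan is to run the construction of Corollary \ref{cor:downtoup} backwards, from the behaviour of $\tau^{\Uparrow}$ at $\overline 0$ to that of $\tau$ at $X$, by a single engine that is then specialised in the three parts. Fix a bounded open neighbourhood $W$ of $0$ from the base $\{W_n\}$ and write $W_{\ast}f:=f^{-}(W)$, so that $W_{\ast}\mathcal H=\mathcal H^{-}(W)$ for any filter $\mathcal H$. Given $X\in\lm_{\tau}\gamma$, Corollary \ref{cor:Falpha} yields $\overline 0\in\lm_{\tau^{\Uparrow}}[\gamma,\mathcal N(0)]$, and $\mathbb B$-basedness of $\tau^{\Uparrow}$ at $\overline 0$ provides $\mathcal H\in\mathbb B$ with $\mathcal H\le[\gamma,\mathcal N(0)]$ and $\overline 0\in\lm_{\tau^{\Uparrow}}\mathcal H$. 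Since $\mathbb B$ is $\mathbb F_{0}$-composable and $W_{\ast}$ is a map, $\mathcal D:=W_{\ast}\mathcal H=\mathcal H^{-}(W)$ again lies in $\mathbb B$; Lemma \ref{lem:conv-at-zero} gives $X\in\lm_{\tau}\mathcal D$; and monotonicity of $W_{\ast}$ gives
\[
\mathcal D\le W_{\ast}[\gamma,\mathcal N(0)]=[\gamma,\mathcal N(0)]^{-}(W).
\]
Thus every $\tau$-convergent $\gamma$ manufactures a $\tau$-convergent $\mathcal D\in\mathbb B$, and the whole problem reduces to placing $\mathcal D$ below $\gamma$; by Theorem \ref{thm:transfer-compact} the obstruction is exactly that $\gamma\le[\gamma,\mathcal N(0)]^{-}(W)$ is, in general, a \emph{proper} refinement.

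Parts (1) and (3) close this gap and conclude that $\tau$ is $\mathbb B$-based at $X$. For (1) I would secure the refinement in advance: functional separation lets me replace $\gamma$ by a functionally separated $\alpha\le\gamma$ with $X\in\lm_{\tau}\alpha$, and Theorem \ref{thm:transfer-compact} then upgrades the display to the equality $[\alpha,\mathcal N(0)]^{-}(W)=\alpha$, so that $\mathcal D\le\alpha\le\gamma$. For (3), where $\tau=[X,\$]$ and $\tau^{\Uparrow}=[X,\mathbb R]$, complete regularity of $X$ and $\$$-compatibility of $\mathbb B$ substitute for functional separation: it suffices to treat $\gamma$ in the convergence base of Proposition \ref{prop:base}, refined as in the first part of Lemma \ref{lem:separation} so that $\gamma=\mathcal O^{\sharp}(\gamma^{\Downarrow})$ with $\gamma^{\Downarrow}$ a closed ideal base. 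After running the engine I would pass from $\mathcal D$ to $\mathcal D':=\mathcal O^{\sharp}(\mathcal D^{\Downarrow})$, which lies in $\mathbb B$ by $\$$-compatibility and still satisfies $X\in\lm_{[X,\$]}\mathcal D'$ (a direct check through (\ref{crit-inh})). Since $\mathcal D\le[\gamma,\mathcal N(0)]^{-}(W)$ gives $\mathcal D^{\Downarrow}\subseteq\big([\gamma,\mathcal N(0)]^{-}(W)\big)^{\Downarrow}$, hence $\mathcal D'\le\mathcal O^{\sharp}\big(([\gamma,\mathcal N(0)]^{-}(W))^{\Downarrow}\big)$, the identity $\mathcal O^{\sharp}\big(([\gamma,\mathcal N(0)]^{-}(W))^{\Downarrow}\big)=\gamma$ of Lemma \ref{lem:separation} — whose proof needs only complete regularity once $\gamma^{\Downarrow}$ is closed — finally yields $\mathcal D'\le\gamma$.

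Part (2) is where I expect the real difficulty. Dropping functional separation, the gap cannot be closed for an individual $\gamma$, so instead I would collect, for every $\gamma$ with $X\in\lm_{\tau}\gamma$, the filter $\mathcal D_{\gamma}\in\mathbb B$ produced by the engine, all $\tau$-converging to $X$. Because $P\tau$ is a pretopology, $P\tau$ is $\mathbb B^{\wedge}$-based at $X$ precisely when its neighbourhood filter $\mathcal V_{P\tau}(X)=\bigwedge\{\gamma:X\in\lm_{\tau}\gamma\}$ belongs to $\mathbb B^{\wedge}$. One inequality is immediate: each $\mathcal D_{\gamma}$ is itself $\tau$-convergent, whence $\mathcal V_{P\tau}(X)\le\bigwedge_{\gamma}\mathcal D_{\gamma}\in\mathbb B^{\wedge}$. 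The main obstacle is the reverse comparison — identifying $\mathcal V_{P\tau}(X)$ with this infimum of $\mathbb B$-filters — and this is exactly what forces the statement to weaken $\tau$ to its pretopological modification $P\tau$ (so that convergence at $X$ is governed by the single filter $\mathcal V_{P\tau}(X)$, an infimum) and to weaken $\mathbb B$ to $\mathbb B^{\wedge}$, which is by definition closed under the infima needed to absorb the discrepancy $\gamma\le[\gamma,\mathcal N(0)]^{-}(W)$ left uncorrected when functional separation is unavailable.
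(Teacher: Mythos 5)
Parts (1) and (3) of your proposal are correct and essentially the paper's own argument. Your ``engine'' (Corollary \ref{cor:Falpha} to erect the filter, $\mathbb{B}$-basedness at $\overline{0}$ to extract $\mathcal{H}\in\mathbb{B}$, $\mathbb{F}_{0}$-composability applied to the map $W_{\ast}$, Lemma \ref{lem:conv-at-zero} for convergence of $\mathcal{H}^{-}(W)$) is exactly what the paper runs; the equality $[\alpha,\mathcal{N}(0)]^{-}(W)=\alpha$ of Theorem \ref{thm:transfer-compact} closes (1); and your treatment of (3) --- restrict to $\gamma=\mathcal{O}^{\sharp}(\gamma^{\Downarrow})$ with $\gamma^{\Downarrow}$ a closed ideal base, pass to $\mathcal{D}'=\mathcal{O}^{\sharp}(\mathcal{D}^{\Downarrow})\in\mathbb{B}$ by $\$$-compatibility, check $X\in\lm_{[X,\$]}\mathcal{D}'$ through (\ref{crit-inh}), and invoke the identity $\mathcal{O}^{\sharp}\left(([\gamma,\mathcal{N}(0)]^{-}(W))^{\Downarrow}\right)=\gamma$ of Lemma \ref{lem:separation} --- matches the paper's proof, including your correct observation that this identity needs only complete regularity once $\gamma^{\Downarrow}$ consists of closed sets (Lemma \ref{lem:separation} is stated under normality, but normality enters only for functional separation).

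Part (2), however, has a genuine gap. You correctly reduce the problem to the reverse comparison $\bigwedge_{\gamma}\mathcal{D}_{\gamma}\leq\mathcal{V}_{P\tau}(X)$, explicitly call it ``the main obstacle,'' and then replace a proof by a heuristic about why the statement is weakened to $P\tau$ and $\mathbb{B}^{\wedge}$. That heuristic is not correct as stated: taking infima does not ``absorb the discrepancy'' $\gamma\leq[\gamma,\mathcal{N}(0)]^{-}(W)$, because for a general convergent $\gamma$ you have no comparison between $\mathcal{D}_{\gamma}$ and $\gamma$ at all, so the infimum of the $\mathcal{D}_{\gamma}$ need not be finer than $\bigwedge\gamma$. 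The missing idea --- the whole point of the paper's proof of (2) --- is to run the engine only over \emph{ultrafilters}: if $\alpha$ is an ultrafilter, the inequality $\alpha\leq[\alpha,\mathcal{N}(0)]^{-}(W)$ of Theorem \ref{thm:transfer-compact} forces the \emph{equality} $\alpha=[\alpha,\mathcal{N}(0)]^{-}(W)$ by maximality (the paper records this remark right after the theorem), so functional separation becomes superfluous and $\mathcal{D}_{\alpha}\leq\alpha$ for every $\tau$-convergent ultrafilter $\alpha$. Since $\tau$, being a solid hyperconvergence, is pseudotopological, $\mathcal{V}_{P\tau}(X)=\bigwedge\{\alpha\in\mathbb{U}(C(X,\$)):X\in\lm_{\tau}\alpha\}$, and the two inequalities yield $\mathcal{V}_{P\tau}(X)=\bigwedge_{\alpha}\mathcal{D}_{\alpha}\in\mathbb{B}^{\wedge}$; here $\mathbb{B}^{\wedge}$ is needed simply because the vicinity filter is an infimum of $\mathbb{B}$-filters, not to repair the discrepancy. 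Without this ultrafilter step your part (2) does not go through.
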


\begin{proof}
(1). Let $\alpha $ be a functionally separated filter on $C(X,\$)$ such that 
$X\in \lm_{\tau }\alpha .$ By Corollary \ref{cor:Falpha}, $\overline{0}\in
\lm_{\tau ^{\Uparrow }}\left[ \alpha ,\mathcal{N}(0)\right] .$ Therefore,
there is $\mathcal{G}\in \mathbb{B}$ such that $\overline{0}\in \lm_{\tau
^{\Uparrow }}\mathcal{G}$ and $\mathcal{G}\leq \left[ \alpha ,\mathcal{N}(0)%
\right] ,$ hence $X\in \lm_{\tau }\mathcal{G}^{-}(W)$. In view of Theorem %
\ref{thm:transfer-compact},%
\begin{equation*}
\alpha =\left[ \alpha ,\mathcal{N}(0)\right] ^{-}(W)\geq \mathcal{G}^{-}(W),
\end{equation*}%
and $\mathcal{G}^{-}(W)\in \mathbb{B}$ by $\mathbb{F}_{0}$-composability.

(2). If, in the proof above, $\alpha $ is an ultrafilter, then the
assumption of functional separation is not needed. Now the vicinity filter
of $X$ for $P\tau $ is 
\begin{eqnarray*}
\mathcal{V}_{\tau }(X) &=&\bigwedge \left\{ \alpha :\alpha \in \mathbb{U}%
(C\left( X,\$\right) ),X\in \lm_{\tau }\alpha \right\} \\
&=&\bigwedge \left\{ \mathcal{G}^{-}(W_{1}):\alpha \in \mathbb{U}(C\left(
X,\$\right) ),X\in \lm_{\tau }\alpha \right\} .
\end{eqnarray*}

Therefore $\mathcal{V}_{\tau }(X)\in \mathbb{B}^{^{\bigwedge }}$.

(3). In the proof of (1) above, if $\tau =[X,\$]$ then by Lemma \ref%
{lem:separation}, we can assume $\alpha =\mathcal{O}^{\sharp }\left( [\alpha
,\mathcal{N}(0)]^{-}(W)\right) ^{\Downarrow }\geq \mathcal{O}^{\sharp }(%
\mathcal{G}^{-}(W))^{\Downarrow }.$ By $\$$-compatibility and $\mathbb{F}%
_{0} $-composability, $\mathcal{O}^{\sharp }\left( \mathcal{G}^{-}(W)\right)
^{\Downarrow }\in \mathbb{B}$ and $X\in \lm_{\lbrack X,\$]}\mathcal{O}%
^{\sharp }\left( \mathcal{G}^{-}(W)\right) ^{\Downarrow }$ by Proposition %
\ref{prop:base}.
\end{proof}

Combining Corollaries \ref{cor:downtoup} and \ref{cor:uptodown}, we obtain:

\begin{corollary}
Let $\mathbb{B}$ be an $\mathbb{F}_{0}$-composable class of filters.

\begin{enumerate}
\item Let $\tau $ be a functionally separated solid hyperconvergence.

\begin{enumerate}
\item If $\mathbb{B}$ is countably upper closed then $\tau ^{\Uparrow }$ is $%
\mathbb{B}$-based at $\overline{0}$ if and only if $\tau $ is $\mathbb{B}$%
-based at $X$.

\item If $\mathbb{B}$ is $\mathbb{F}_{1}$-steady and if $\tau $ is
pretopological, then $\tau ^{\Uparrow }$ is $\mathbb{B}$-based at $\overline{%
0}$ if and only if $\tau $ is $\mathbb{B}$-based at $X$.
\end{enumerate}

\item If $\mathbb{B}$ is countably upper closed and $\$$-compatible, then $%
[X,\mathbb{R}]$ is $\mathbb{B}$-based if and only if $[X,\$]$ is $\mathbb{B}$%
-based at $X$.
\end{enumerate}
\end{corollary}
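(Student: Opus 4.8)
The plan is to obtain each ``if and only if'' by pairing the downward implication of Corollary~\ref{cor:downtoup} with the matching upward implication of Corollary~\ref{cor:uptodown}, verifying only that the hypotheses line up. Throughout, $\mathbb{B}$ is $\mathbb{F}_{0}$-composable by the standing assumption, so that hypothesis is always at hand, and the content of the proof reduces to bookkeeping about which additional hypotheses each cited part needs.

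For part (1), $\tau$ is a functionally separated solid hyperconvergence. In case (a) I would argue that if $\tau$ is $\mathbb{B}$-based at $X$, then since $\mathbb{B}$ is $\mathbb{F}_{0}$-composable and countably upper closed, Corollary~\ref{cor:downtoup}(1) yields that $\tau^{\Uparrow}$ is $\mathbb{B}$-based at $\overline{0}$; conversely, if $\tau^{\Uparrow}$ is $\mathbb{B}$-based at $\overline{0}$, then because $\tau$ is a functionally separated solid hyperconvergence and $\mathbb{B}$ is $\mathbb{F}_{0}$-composable, Corollary~\ref{cor:uptodown}(1) yields that $\tau$ is $\mathbb{B}$-based at $X$. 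Case (b) is formally identical, except that the downward direction now invokes Corollary~\ref{cor:downtoup}(2), whose hypotheses ($\mathbb{B}$ being $\mathbb{F}_{0}$-composable and $\mathbb{F}_{1}$-steady and $\tau$ pretopological) are exactly those assumed in (b), while the upward direction is again Corollary~\ref{cor:uptodown}(1), which needs neither $\mathbb{F}_{1}$-steadiness nor pretopologicity.

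For part (2), the key preliminary observation is that the natural convergence is preimage-wise with respect to $[X,\$]$, that is, $[X,\mathbb{R}]=[X,\$]^{\Uparrow}$, and that $[X,\$]$ is itself a solid hyperconvergence; I may therefore take $\tau=[X,\$]$ and $\tau^{\Uparrow}=[X,\mathbb{R}]$. Since $[X,\mathbb{R}]$ is translation-invariant, it is $\mathbb{B}$-based if and only if it is $\mathbb{B}$-based at $\overline{0}$, which lets me pass between the global statement in the corollary and the at-$\overline{0}$ statements supplied by the two cited corollaries. The forward implication (from $[X,\$]$ being $\mathbb{B}$-based at $X$ to $[X,\mathbb{R}]$ being $\mathbb{B}$-based) is then Corollary~\ref{cor:downtoup}(1), using $\mathbb{F}_{0}$-composability and countable upper closedness, and the converse is Corollary~\ref{cor:uptodown}(3), using $\$$-compatibility.

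The bulk of this is routine matching of hypotheses; the one step I would be careful not to skip is the translation-invariance reduction in part (2), since the cited corollaries only control behaviour at $\overline{0}$ whereas the statement asks for $[X,\mathbb{R}]$ to be $\mathbb{B}$-based outright. I do not anticipate a genuine obstacle, but I would note explicitly that Corollary~\ref{cor:uptodown}(3) already absorbs, via Lemma~\ref{lem:separation}, whatever separation of $[X,\$]$ is required, so part (2) does not demand a separate verification that the natural hyperconvergence is functionally separated.
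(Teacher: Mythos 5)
Your proposal is correct and is essentially the paper's own argument: the paper states this corollary with no further proof beyond ``Combining Corollaries~\ref{cor:downtoup} and~\ref{cor:uptodown}, we obtain,'' which is exactly the pairing of implications you carry out, with $\tau=[X,\$]$ (a solid hyperconvergence) and $[X,\mathbb{R}]=[X,\$]^{\Uparrow}$ in part (2). Your explicit treatment of the translation-invariance reduction in part (2) --- where passing between $\mathbb{B}$-based at $\overline{0}$ and $\mathbb{B}$-based everywhere also tacitly uses $\mathbb{F}_{0}$-composability to keep translated filters in the class $\mathbb{B}$ --- correctly fills in a detail the paper leaves implicit (and uses later in deriving $\chi([X,\mathbb{R}])=\chi([X,\$],X)$).
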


In view of Lemma \ref{lem:separation}, we have in particular:

\begin{corollary}
\label{cor:iffinstances} Let $\mathbb{B}$ be an $\mathbb{F}_{0}$-composable
class of filters.

\begin{enumerate}
\item If $\mathbb{B}$ is $\mathbb{F}_{1}$-steady and $\alpha \subseteq
\kappa (X)$, then $C_{\alpha }(X,\mathbb{R})$ is $\mathbb{B}$-based at $%
\overline{0}$ if and only if $C_{\alpha }(X,\$)$ is $\mathbb{B}$-based at $%
X. $

\item If $\mathbb{B}$ is countably upper closed and either $X$ is normal or $%
\mathbb{B}$ is $\$$-compatible, then $[X,\mathbb{R}]$ is $\mathbb{B}$-based
if and only if $[X,\$]$ is $\mathbb{B}$-based at $X.$
\end{enumerate}
\end{corollary}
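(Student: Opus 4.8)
The plan is to deduce both equivalences from the two directional Corollaries \ref{cor:downtoup} and \ref{cor:uptodown}, by choosing in each part the appropriate solid hyperconvergence $\tau$ on $C(X,\$)$ whose preimage-wise lift $\tau^{\Uparrow}$ is the function-space convergence under consideration. For (1) I would take $\tau=\alpha(X,\$)$, using the earlier identity $\alpha(X,\mathbb{R})=\alpha(X,\$)^{\Uparrow}$; for (2) I would take $\tau=[X,\$]$, using $[X,\mathbb{R}]=[X,\$]^{\Uparrow}$. With $\tau$ so chosen, the proof reduces to checking that the hypotheses of the relevant half of each corollary are satisfied.

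For (1), first I would record that when $\alpha\subseteq\kappa(X)$ the convergence $\tau=\alpha(X,\$)$ is a solid hyperconvergence, is a topology (hence a pretopology), and is functionally separated, the last because compact families on a completely regular space are functionally separated (after \cite{DM_Isbell}). Then the direction ``$C_{\alpha}(X,\$)$ is $\mathbb{B}$-based at $X$ $\Rightarrow$ $C_{\alpha}(X,\mathbb{R})$ is $\mathbb{B}$-based at $\overline{0}$'' follows from Corollary \ref{cor:downtoup}(2), whose hypotheses ($\mathbb{B}$ being $\mathbb{F}_{0}$-composable and $\mathbb{F}_{1}$-steady, $\tau$ a pretopology) are exactly those assumed; and the converse direction follows from Corollary \ref{cor:uptodown}(1), applicable because $\tau$ is a functionally separated solid hyperconvergence.

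For (2), I would set $\tau=[X,\$]$, a solid hyperconvergence, and assume $\mathbb{B}$ countably upper closed. The forward direction ``$[X,\$]$ is $\mathbb{B}$-based at $X$ $\Rightarrow$ $[X,\mathbb{R}]$ is $\mathbb{B}$-based at $\overline{0}$'' is Corollary \ref{cor:downtoup}(1); since $[X,\mathbb{R}]$ is translation-invariant and $\mathbb{B}$ is $\mathbb{F}_{0}$-composable, hence stable under the translation homeomorphisms, I would upgrade ``at $\overline{0}$'' to ``$\mathbb{B}$-based'' at every point. For the converse, starting from $[X,\mathbb{R}]$ being $\mathbb{B}$-based, hence in particular at $\overline{0}$, I would split on the hypothesis: if $X$ is normal then Lemma \ref{lem:separation} makes $[X,\$]$ a functionally separated solid hyperconvergence and Corollary \ref{cor:uptodown}(1) applies, while if instead $\mathbb{B}$ is $\$$-compatible then Corollary \ref{cor:uptodown}(3) gives the same conclusion directly.

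The argument is mostly bookkeeping over previously established machinery, so there is no serious computational obstacle. The point needing the most care --- the ``hard part'' in spirit --- is that the two disjuncts in (2), ``$X$ normal'' and ``$\mathbb{B}$ is $\$$-compatible'', are not interchangeable: they feed two genuinely different reconstruction mechanisms. Normality is used only to secure the functional separation of $[X,\$]$ demanded by Corollary \ref{cor:uptodown}(1), whereas $\$$-compatibility is precisely what powers the alternative reconstruction of Corollary \ref{cor:uptodown}(3). I would therefore verify the converse direction separately under each hypothesis, and track exactly where functional separation, as opposed to mere solidity, is actually invoked.
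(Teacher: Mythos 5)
Your proposal is correct and takes essentially the same route as the paper, which obtains this corollary by instantiating Corollaries \ref{cor:downtoup} and \ref{cor:uptodown} with $\tau=\alpha(X,\$)$ (a solid, pretopological, functionally separated hyperconvergence, since compact families on a completely regular space are functionally separated) for part (1), and with $\tau=[X,\$]$ for part (2), splitting exactly as you do between Lemma \ref{lem:separation} in the normal case and Corollary \ref{cor:uptodown}(3) in the $\$$-compatible case. Your only addition is making explicit the translation-invariance upgrade from ``$\mathbb{B}$-based at $\overline{0}$'' to ``$\mathbb{B}$-based'' everywhere for $[X,\mathbb{R}]$, a step the paper leaves implicit.
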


In particular, if $\mathcal{D}$ is a compact network on a completely regular
space $X$, we consider $\alpha _{\mathcal{D}}:=\mathcal{O}_{X}^{\natural }(%
\mathcal{D)}.$ Then $C_{\alpha _{\mathcal{D}}}(X,\mathbb{R})$ is a
topological group and if $\gamma $ is a cardinal function corresponding to a 
$\mathbb{F}_{1}$-steady and $\mathbb{F}_{0}$-composable class of filters,
like \emph{character }$\chi $, \emph{tightness }$t$, \emph{fan-tightness }$%
\mathrm{vet}$, and \emph{strong fan-tightness} $\mathrm{vet}^{\ast }$,
then 
\begin{equation}
\gamma (C_{\alpha _{\mathcal{D}}}(X,\mathbb{R}))=\gamma (C_{\alpha _{%
\mathcal{D}}}(X,\$),X).  \label{eq:networkcase2}
\end{equation}

As mentioned before, translations need not be continuous for the Isbell
topology on $C(X,\mathbb{R)}$. However, the fine Isbell topology $\overline{%
\kappa }(X,\mathbb{R)}$ is always translation-invariant and the neighborhood
filter of $f$ for the fine Isbell topology is $f+\mathcal{N}_{\kappa }(%
\overline{0})$ \cite[Theorem 4.1]{DM_Isbell}, which implies that the
translations are continuous for the Isbell topology if and only if this
Isbell topology coincides with the fine Isbell topology.

On the other hand, for every $X$ there exists the finest
translation-invariant topology $\Sigma (X,\mathbb{R})$ that is an $\mathbb{R}
$-dual topology of $\Sigma (X)\subseteq \kappa (X)$, hence coarser than the
Isbell topology $\kappa (X,\mathbb{R})$ \cite{groupIsbell}. Therefore%
\begin{equation*}
\gamma (C_{\overline{\kappa }}(X,\mathbb{R}))=\gamma (C_{\kappa }(X,\$),X)%
\text{ and }\gamma (C_{\Sigma }(X,\mathbb{R}))=\gamma (C_{\Sigma }(X,\$),X).
\end{equation*}

We will see in the next section that calculating invariants for $C_{\alpha _{%
\mathcal{D}}}(X,\$),C_{\kappa }(X,\$)$ and $[X,\$]$ in terms of $X$ is often
easy. This way, we will recover a large number of known results, as well as
obtain new ones.

\section{Character and tightness}

\begin{theorem}
\label{th:tightcharacter} (e.g., \cite{mynard.uKc}) The tightness and the
character of $\left[ X,\$\right] $ coincide.
\end{theorem}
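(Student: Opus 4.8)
The plan is to establish the nontrivial inequality $\chi([X,\$])\le t([X,\$])$, since $t\le \chi$ holds for every convergence space. The guiding principle is that $[X,\$]$ is entirely controlled by open covers: by Proposition~\ref{prop:base}, every filter $\gamma$ with $Y\in \lm_{[X,\$]}\gamma$ is finer than a filter $\mathcal{O}_X^{\natural}(\mathcal{P})$ that still converges to $Y$, where $\mathcal{P}$ is an ideal subbase whose elements are open and cover $Y$. Thus, up to refinement, the convergent filters are exactly those generated by open covers of $Y$ (viewed as ideal subbases), and by Proposition~\ref{prop:refine} passing to a coarser convergent filter amounts to passing to a coarser open cover of $Y$.

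First I would translate both invariants at a fixed $Y\in C(X,\$)$ into one covering statement. For the character, the reduction above shows that $[X,\$]$ is $\mathbb{F}_{\kappa}$-based at $Y$ precisely when every open cover $\mathcal{P}$ of $Y$ admits a subcover of cardinality less than $\aleph_{\kappa}$: given such a subcover $\mathcal{P}_{0}\subseteq \mathcal{P}$, its ideal closure $\mathcal{P}_{0}^{\cup}$ has the same cardinality (finite unions do not increase an infinite cardinal), refines $\mathcal{P}$ because $\mathcal{P}$ is an ideal subbase, and generates by Corollary~\ref{cor:idealcover} a convergent filter $\mathcal{O}_X^{\natural}(\mathcal{P}_{0}^{\cup})\le \mathcal{O}_X^{\natural}(\mathcal{P})\le \gamma$ with a base of the required size. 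For the tightness, Corollary~\ref{cor:idealcover} identifies $Y\in \mathrm{adh}_{[X,\$]}\mathcal{P}^{\cup}$ with the statement that $\mathcal{P}$ covers $Y$, and a subfamily $\mathcal{B}\subseteq \mathcal{P}^{\cup}$ witnessing $Y\in \mathrm{adh}_{[X,\$]}\mathcal{B}$ is exactly a subcover of $Y$, its members being finite unions drawn from $\mathcal{P}$. Hence the $\mathbb{F}_{1}^{\lozenge_{\kappa}}$-based condition defining $\kappa$-tightness also reduces to: every open cover of $Y$ has a subcover of cardinality at most $\kappa$.

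With both invariants expressed through the same covering degree at $Y$, the coincidence at $Y$ is immediate, and taking the supremum over all $Y\in C(X,\$)$ yields $t([X,\$])=\chi([X,\$])$. Concretely, to close $\chi\le t$ I would start from an arbitrary convergent filter, replace it via Proposition~\ref{prop:base} by a cover-filter $\mathcal{O}_X^{\natural}(\mathcal{P})$, form $\mathcal{P}^{\cup}$ so that $Y\in \mathrm{adh}_{[X,\$]}\mathcal{P}^{\cup}$, apply $t\le\kappa$ to extract a subcover of size at most $\kappa$, and build from it a coarser convergent filter of character at most $\kappa$.

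The step I expect to be the main obstacle is the faithful translation of the abstract filter-class definitions of tightness and character into the covering language: one must verify that the relation $\lozenge_{\kappa}$ and the classes $\mathbb{F}_{\kappa}$, $\mathbb{F}_{1}^{\lozenge_{\kappa}}$ behave as claimed along the cover-filters $\mathcal{O}_X^{\natural}(\mathcal{P})$, and in particular that meshing with $\mathcal{P}^{\cup}$ genuinely encodes covering of $Y$ through (\ref{crit-inh}). The remaining cardinality bookkeeping---that the ideal closure of a subcover of size at most $\kappa$ stays of size at most $\kappa$ and refines the original ideal subbase---is routine but must be recorded so that the witnessing filters land in the correct class.
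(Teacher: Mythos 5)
Your proposal is correct and follows essentially the same route as the paper's proof: reduce an arbitrary convergent filter to a cover-filter $\mathcal{O}_X^{\natural}(\mathcal{P})$ via Proposition~\ref{prop:base}, use tightness to extract a subfamily of $\mathcal{P}$ of cardinality at most $\lambda$, close it under finite unions, and invoke Proposition~\ref{prop:refine} (or Corollary~\ref{cor:idealcover}) to obtain a coarser convergent filter with a base of size at most $\lambda$. The only (harmless) divergence is that the paper extracts a family $\mathcal{S}_0$ \emph{meshing} with $\mathcal{O}_X^{\natural}(\mathcal{P})$, so that $\mathcal{O}_X^{\natural}(\mathcal{S}_0^{\cup})$ actually \emph{equals} $\mathcal{O}_X^{\natural}(\mathcal{P})$, whereas you settle for a subcover yielding a strictly coarser convergent filter --- which suffices equally well for bounding the character.
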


\begin{proof}
As the tightness is not greater than character, we need only prove that $%
\chi (\left[ X,\$\right] ,Y)\leq t(\left[ X,\$\right] ,Y)$. Assume that $t(%
\left[ X,\$\right] ,Y)=\lambda $ and let $Y\in \lm_{\left[ X,\$%
\right] }\gamma $. By Proposition \ref{prop:base}, there exists an ideal
subbase $\mathcal{P}$ that is an open cover of $Y$ such that $Y\in
\lm_{\left[ X,\$\right] }\mathcal{O}_{X}^{\natural }(\mathcal{P})$
and $\mathcal{O}_{X}^{\natural }(\mathcal{P})\leq \gamma $. It is clear that 
$\mathcal{P}\#\mathcal{O}_{X}^{\natural }(\mathcal{P})$, hence there is a
family $\mathcal{S}_{0}\subseteq \mathcal{P}$ such that $\mathrm{card}%
\mathcal{S}_{0}\leq \lambda $ and $\mathcal{S}_{0}\#\mathcal{O}%
_{X}^{\natural }(\mathcal{P})$. The family $\mathcal{S}:=\mathcal{S}%
_{0}^{\cup }$ is a subfamily of $\mathcal{P}$, because $\mathcal{P}$ is an
ideal, $\mathrm{card}\mathcal{S}\leq \lambda $ and, a fortiori $\mathcal{S}%
\#\mathcal{O}_{X}^{\natural }(\mathcal{P})$. In view of Proposition \ref%
{prop:refine}, $\mathcal{O}_{X}^{\natural }(\mathcal{P})\leq \mathcal{O}%
_{X}^{\natural }(\mathcal{S})$ and $\mathcal{O}_{X}^{\natural }(\mathcal{S})$
is a filter-base, so that $Y\in \lm_{\left[ X,\$\right] }\mathcal{O%
}_{X}^{\natural }(\mathcal{S})$. Moreover $\mathcal{O}_{X}^{\natural }(%
\mathcal{P})\geq \mathcal{O}_{X}^{\natural }(\mathcal{S})$ because $\mathcal{%
S}\subseteq \mathcal{P}$, so that $\mathcal{O}_{X}^{\natural }(\mathcal{P})=%
\mathcal{O}_{X}^{\natural }(\mathcal{S})$ has a filter base of cardinality
not greater than $\lambda .$
\end{proof}

An immediate consequence of Corollary \ref{cor:idealcover} and Theorem \ref%
{th:tightcharacter} is (the known fact \cite{mynard.uKc}) that 
\begin{equation}
t([X,\$],U)=\chi ([X,\$],U)=L(U)  \label{eq:equalforconthyper}
\end{equation}%
at each $U\in C(X,\$)$, where $L(U)$ is the Lindel\"{o}f degree of $U$.

The $\alpha $-\emph{Lindel\"{o}f number }$\alpha L(U)$ a subset $U$ of $X$
is the smallest cardinal $\lambda $ such that every open $\alpha $-cover of $%
U$ has an $\alpha $-subcover of $U$ of cardinality not greater than $\lambda 
$. In view of Corollary \ref{cor:idealcover}, we have if $p(X)\subseteq
\alpha \subseteq \kappa (X),$ then an ideal base $\mathcal{P}\subseteq
C(X,\$)$ is an open cover of $U\in C(X,\$)$ if and only if it is an $\alpha $%
-cover of $U$. Therefore 
\begin{equation}
L(U)\leq \alpha L(U)  \label{eq:LlessthanalhpaL}
\end{equation}%
for each open subset $U$ of $X$.

It follows immediately from Proposition \ref{pro:adhalpha} that%
\begin{equation}
\alpha L(U)=t(\alpha (X,\$),U).  \label{eq:alphaLind}
\end{equation}

In view of Corollary \ref{cor:iffinstances} (1) and of the fact that the
class $\mathbb{F}_{1}^{\lozenge }$ is $\mathbb{F}_{1}$-steady and $\mathbb{F}%
_{0}$-composable, we obtain:

\begin{theorem}
\label{th:tighntessCalpha} Let $\alpha $ be a topology on $C(X,\$)$ such
that $p(X)\subseteq \alpha \subseteq \kappa (X)$. Then%
\begin{equation*}
\alpha L(X)=t(\alpha (X,\$),X)=t(C_{\alpha }(X,\mathbb{R}),\overline{0}).
\end{equation*}
\end{theorem}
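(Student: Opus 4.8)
The plan is to establish the equality as a chain of three identifications, each coming from a result already proved in the excerpt. The statement asserts
\[
\alpha L(X)=t(\alpha(X,\$),X)=t(C_{\alpha}(X,\mathbb{R}),\overline{0}),
\]
so I would treat the two equalities separately. The middle-to-left equality $\alpha L(X)=t(\alpha(X,\$),X)$ is essentially immediate: by (\ref{eq:alphaLind}), which was obtained from Proposition~\ref{pro:adhalpha}, the tightness of $\alpha(X,\$)$ at any open $U$ equals $\alpha L(U)$, and specializing to $U=X$ gives the first equality with no further work. So the content lies entirely in the right-hand equality $t(\alpha(X,\$),X)=t(C_{\alpha}(X,\mathbb{R}),\overline{0})$.

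For that equality the plan is to invoke Corollary~\ref{cor:iffinstances}(1), which says that for an $\mathbb{F}_{0}$-composable and $\mathbb{F}_{1}$-steady class $\mathbb{B}$ and $\alpha\subseteq\kappa(X)$, the space $C_{\alpha}(X,\mathbb{R})$ is $\mathbb{B}$-based at $\overline{0}$ if and only if $C_{\alpha}(X,\$)$ is $\mathbb{B}$-based at $X$. The key is to choose the right family of classes. Tightness is the cardinal invariant attached to the $\mathbb{F}_{1}^{\lozenge_{\kappa}}$ hierarchy: by the table (\ref{table}), $\kappa$-tightness corresponds to being $\mathbb{F}_{1}^{\lozenge_{\kappa}}$-based. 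As noted in the excerpt just before the theorem, the class $\mathbb{F}_{1}^{\lozenge_{\kappa}}$ is both $\mathbb{F}_{1}$-steady and $\mathbb{F}_{0}$-composable, so Corollary~\ref{cor:iffinstances}(1) applies with $\mathbb{B}=\mathbb{F}_{1}^{\lozenge_{\kappa}}$. Thus $C_{\alpha}(X,\mathbb{R})$ has tightness at most $\kappa$ at $\overline{0}$ iff $C_{\alpha}(X,\$)$ has tightness at most $\kappa$ at $X$, for every infinite cardinal $\kappa$; running this equivalence over all $\kappa$ yields equality of the two tightness values.

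I would also record why the hypothesis $p(X)\subseteq\alpha$ is needed: it guarantees (via (\ref{eq:LlessthanalhpaL}) and the surrounding discussion) that $\alpha$-covers behave like genuine open covers, so that $\alpha L(X)$ is a meaningful cover-theoretic invariant and $p(X,\$)\leq\alpha(X,\$)$, putting $\alpha(X,\$)$ in the range $p(X,\$)\leq\tau\leq[X,\$]$ where the solid-hyperconvergence machinery operates. The main obstacle, if any, is purely bookkeeping: one must be careful that ``tightness at a point'' is the supremum over all convergent filters of the least cardinality of a sufficient subfamily, and that the biconditional of Corollary~\ref{cor:iffinstances}(1) is being applied at the matched base points $\overline{0}$ and $X$ rather than globally. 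Once the class $\mathbb{F}_{1}^{\lozenge_{\kappa}}$ is identified as the correct $\mathbb{B}$ and its two closure properties are cited, no genuine difficulty remains, and the proof reduces to assembling (\ref{eq:alphaLind}) with the cited corollary.
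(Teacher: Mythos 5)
Your proposal is correct and is essentially the paper's own argument: the paper states the theorem as an immediate consequence of the identity $\alpha L(U)=t(\alpha(X,\$),U)$ from Proposition~\ref{pro:adhalpha} (i.e., (\ref{eq:alphaLind})) together with Corollary~\ref{cor:iffinstances}(1) applied to the $\mathbb{F}_{0}$-composable, $\mathbb{F}_{1}$-steady classes $\mathbb{F}_{1}^{\lozenge_{\kappa}}$. Your added bookkeeping (running the biconditional over all cardinals $\kappa$ at the matched base points $\overline{0}$ and $X$) is exactly the implicit content of the paper's one-line derivation.
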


A similar result was announced in \cite[Corollary 3.3]{D.pannonica}, but the
provided proof was not correct. In particular, if $\alpha =\alpha _{\mathcal{%
D}}$ where $\mathcal{D}$ is a network of compact subsets of $X,$ then $%
C_{\alpha _{\mathcal{D}}}(X,\mathbb{R})$ is a topological group and 
\begin{equation}
\alpha _{\mathcal{D}}L(X)=t\left( C_{\alpha _{\mathcal{D}}}(X,\mathbb{R}%
)\right) .  \label{eq:tightalphaD}
\end{equation}

This is exactly \cite[Theorem 4.7.1]{McCoy}. Indeed, in \cite{McCoy}, a $%
\mathcal{D}$-cover of $X$, where $\mathcal{D}$ is a network of closed
subsets of $X$, that is, a family of subsets of $X$ such that every member
of $\mathcal{D}$ is contained in some member of this family. McCoy and
Ntantu define the $\mathcal{D}$-Lindel\"{o}f degree of $X$ as the least
cardinality $\lambda $ such that every open $\mathcal{D}$-cover has a $%
\mathcal{D}$-subcover of cardinality not greater than $\lambda ,$ and
establish that $t\left( C_{\alpha _{\mathcal{D}}}(X,\mathbb{R})\right) $ is
equal to the $\mathcal{D}$-Lindel\"{o}f degree of $X$ \cite[Theorem 4.7.1]%
{McCoy}. It is immediate that the $\mathcal{D}$-Lindel\"{o}f degree of $X$
is equal to $\alpha _{\mathcal{D}}L(X)$. Instances include:

\begin{corollary}
(e.g., \cite[Corollary 4.7.2]{McCoy}) $C_{k}(X,\mathbb{R})$ is countably
tight if and only if every open $k$-cover has a countable $k$-subcover.
\end{corollary}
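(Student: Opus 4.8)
The plan is to derive the corollary as a direct specialization of Theorem~\ref{th:tighntessCalpha} with $\alpha=k(X)$, the topology of compactly generated families, whose associated network $\mathcal{D}$ is $\mathcal{K}(X)$, the collection of compact subsets of $X$. First I would recall that since $p(X)\subseteq k(X)\subseteq\kappa(X)$ (the finitely generated families are in particular compactly generated, and compactly generated families are compact), the hypotheses of Theorem~\ref{th:tighntessCalpha} are satisfied, yielding
\begin{equation*}
k L(X)=t(k(X,\$),X)=t(C_{k}(X,\mathbb{R}),\overline{0}).
\end{equation*}
The point is that $C_k(X,\mathbb{R})$ is precisely the compact-open topology, which is a topological group (translations are continuous), so its tightness at $\overline{0}$ equals its tightness as a global invariant: $t(C_{k}(X,\mathbb{R}))=t(C_{k}(X,\mathbb{R}),\overline{0})$.

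Next I would unwind the cardinal statement ``countably tight'' as $t(C_k(X,\mathbb{R}))\leq\aleph_0$, which by the chain of equalities above is equivalent to $kL(X)\leq\aleph_0$. By definition, the $k$-Lindel\"of number $kL(X)$ is the least cardinal $\lambda$ such that every open $k$-cover of $X$ admits a $k$-subcover of cardinality at most $\lambda$; thus $kL(X)\leq\aleph_0$ says exactly that every open $k$-cover of $X$ has a countable $k$-subcover. Here I would invoke the remark in the text following Proposition~\ref{pro:adhalpha} that a $k(X)$-cover coincides with the traditional notion of a $k$-cover, so the statement translates verbatim into the language of the corollary.

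The only step requiring genuine care, rather than pure unwinding, is justifying that $kL(X)=t(C_k(X,\mathbb{R}))$ directly, i.e., that the equality $t(C_\alpha(X,\mathbb{R}),\overline{0})=t(C_\alpha(X,\mathbb{R}))$ holds for the specific case $\alpha=k(X)$. This rests on translation-invariance, which the text explicitly grants for the compact-open topology (``It is known that the topology of pointwise convergence, the compact-open topology\ldots are translation-invariant''). Since $t$ is the cardinal function attached to the $\mathbb{F}_0$-composable and $\mathbb{F}_1$-steady class $\mathbb{F}_1^{\lozenge}$ (the class governing countable tightness), and translations are homeomorphisms fixing no distinguished structure, the local tightness at $\overline{0}$ propagates to every point and hence equals the global tightness. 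Thus $t(C_k(X,\mathbb{R}))=\aleph_0$ if and only if $kL(X)\leq\aleph_0$, which is the asserted equivalence. I expect the main (and essentially only) obstacle to be bookkeeping: making sure that ``$k(X)$-cover'' is identified with ``$k$-cover'' in the sense of McCoy--Ntantu and that the countable-subcover condition transfers cleanly through the chain of equalities; no new analytic content is needed beyond the already-established Theorem~\ref{th:tighntessCalpha}.
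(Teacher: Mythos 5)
Your proposal is correct and takes essentially the same route as the paper: the paper obtains this corollary precisely by specializing Theorem~\ref{th:tighntessCalpha} to the case $\alpha=\alpha_{\mathcal{D}}$ with $\mathcal{D}$ the network of compact sets (equation (\ref{eq:tightalphaD}), i.e.\ McCoy--Ntantu's Theorem 4.7.1), using the topological-group structure of $C_{k}(X,\mathbb{R})$ to pass from tightness at $\overline{0}$ to global tightness, and identifying $k(X)$-covers with classical $k$-covers. Your unwinding of $kL(X)\leq\aleph_{0}$ as the countable $k$-subcover property, and the homogeneity argument via translations, match the paper's implicit reasoning exactly.
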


\begin{corollary}
(e.g., \cite{Arh.function}) The following are equivalent:

\begin{enumerate}
\item $C_{p}(X,\mathbb{R})$ is countably tight;

\item every open $\omega $-cover has a countable $\omega $-subcover;

\item $X^{n}$ is Lindel\"{o}f for every $n\in \omega $.
\end{enumerate}
\end{corollary}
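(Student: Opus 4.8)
The plan is to derive the equivalence of (1) and (2) directly from the machinery already built, and to treat the equivalence of (2) and (3) as the classical, purely covering-theoretic lemma of Gerlits--Nagy (reproving it for completeness, since it does not use the hyperspace formalism). For (1)$\Leftrightarrow$(2) I would apply Theorem~\ref{th:tighntessCalpha} to $\alpha=p(X)$, which is legitimate because $p(X)\subseteq\kappa(X)$. As recalled before that theorem, a $p(X)$-cover is exactly an $\omega$-cover, so the $p(X)$-Lindel\"of number $p(X)L(X)$ is the least cardinal $\lambda$ such that every open $\omega$-cover admits an $\omega$-subcover of cardinality at most $\lambda$. Theorem~\ref{th:tighntessCalpha} then gives $p(X)L(X)=t(C_{p}(X,\mathbb{R}),\overline{0})$. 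Since $C_{p}(X,\mathbb{R})$ is translation-invariant (indeed a topological group), its tightness is the same at every point, so $C_{p}(X,\mathbb{R})$ is countably tight exactly when $t(C_{p}(X,\mathbb{R}),\overline{0})\le\aleph_{0}$, i.e.\ when $p(X)L(X)\le\aleph_{0}$, i.e.\ precisely when (2) holds.

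For (3)$\Rightarrow$(2) I would argue as follows. Let $\mathcal{U}$ be an open $\omega$-cover of $X$. For a fixed $n$, the cubes $\{U^{n}:U\in\mathcal{U}\}$ cover $X^{n}$, because any $(x_{1},\dots,x_{n})$ has its finite support $\{x_{1},\dots,x_{n}\}$ inside some $U\in\mathcal{U}$, whence the tuple lies in $U^{n}$. By Lindel\"ofness of $X^{n}$ there is a countable $\mathcal{U}_{n}\subseteq\mathcal{U}$ whose cubes still cover $X^{n}$. Then $\mathcal{U}_{0}:=\bigcup_{n<\omega}\mathcal{U}_{n}$ is countable and is an $\omega$-cover: given a finite $F\subseteq X$ of size $m$, the tuple listing $F$ in $X^{m}$ lies in some $U^{m}$ with $U\in\mathcal{U}_{m}$, so $F\subseteq U$.

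The main obstacle is the converse (2)$\Rightarrow$(3), where one must manufacture an $\omega$-cover of $X$ out of a cover of $X^{n}$; the difficulty is that covering $X^{n}$ yields a separate box around each tuple rather than a single open set around a prescribed finite subset of $X$. I would first reduce to the case where the cover $\mathcal{W}$ of $X^{n}$ consists of basic open boxes (refine an arbitrary cover to boxes, extract a countable box-subcover, then select one original member above each chosen box). Given such a box cover, let $\mathcal{V}$ be the family of open $V\subseteq X$ for which $V^{n}$ lies in a \emph{finite} union of members of $\mathcal{W}$. The key step is to show $\mathcal{V}$ is an $\omega$-cover: for a finite $F\subseteq X$ the finite set $F^{n}$ is covered by finitely many boxes $W(z)=U_{1}^{z}\times\dots\times U_{n}^{z}$ (one per $z\in F^{n}$), and setting $V_{x}:=\bigcap\{U_{k}^{z}:z\in F^{n},\,z_{k}=x\}$ for $x\in F$ and $V:=\bigcup_{x\in F}V_{x}$, a coordinatewise check shows $V^{n}\subseteq\bigcup_{z\in F^{n}}W(z)$, so $V\in\mathcal{V}$ and $F\subseteq V$. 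Applying (2) to $\mathcal{V}$ yields a countable $\omega$-subcover $\{V_{j}:j<\omega\}$; choosing for each $j$ a finite $\mathcal{W}_{j}\subseteq\mathcal{W}$ with $V_{j}^{n}\subseteq\bigcup\mathcal{W}_{j}$ and taking $\mathcal{W}_{0}:=\bigcup_{j}\mathcal{W}_{j}$ produces a countable subcover of $X^{n}$, because every tuple has its support inside some $V_{j}$.

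I expect the coordinatewise verification that $V^{n}\subseteq\bigcup_{z}W(z)$ (for each tuple in $V^{n}$, route each coordinate through the $V_{x}$ it belongs to and read off the matching box) to be the only genuinely delicate point, with everything else routine bookkeeping. Alternatively, the equivalence (2)$\Leftrightarrow$(3) may simply be cited as the classical Gerlits--Nagy (and Arhangel'skii--Pytkeev) result, since it is independent of the hyperspace methods developed here and only (1)$\Leftrightarrow$(2) relies on Theorem~\ref{th:tighntessCalpha}.
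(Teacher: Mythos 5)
Your proposal is correct, and for the part of the statement that the paper actually argues it follows the paper's route exactly: the equivalence $(1)\Leftrightarrow(2)$ is obtained, as in the paper, from Theorem~\ref{th:tighntessCalpha} applied to $\alpha=p(X)$ (i.e., from $pL(X)=t(C_{p}(X,\$),X)=t(C_{p}(X,\mathbb{R}),\overline{0})$, with homogeneity of the topological group $C_{p}(X,\mathbb{R})$ passing from tightness at $\overline{0}$ to tightness everywhere). Where you diverge is $(2)\Leftrightarrow(3)$: the paper does not prove it but simply invokes the identity $pL(X)=\sup\{L(X^{n}):n\in\omega\}$, citing \cite[Corollary 4.7.3]{McCoy}, whereas you reprove it in full. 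Your argument is sound: the easy direction via cubes $U^{n}$ is routine, and in the hard direction the reduction to box covers is not circular (the box case is applied to the refinement $\{B_{p}\}$ and each chosen box is lifted back to an original member), the family $\mathcal{V}=\{V:\,V^{n}\subseteq\bigcup\mathcal{W}_{0}\text{ for some finite }\mathcal{W}_{0}\subseteq\mathcal{W}\}$ is indeed an $\omega$-cover since each $x\in F$ occurs as a coordinate of some $z\in F^{n}$ (e.g.\ the constant tuple), so $V_{x}=\bigcap\{U_{k}^{z}:z_{k}=x\}$ is a nonempty finite intersection containing $x$, and the coordinatewise check $y_{k}\in V_{x_{k}}\subseteq U_{k}^{(x_{1},\dots,x_{n})}$ gives $V^{n}\subseteq\bigcup_{z\in F^{n}}W(z)$ as claimed. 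What each approach buys: the paper's citation keeps the corollary a two-line consequence of the transfer machinery, emphasizing that only $(1)\Leftrightarrow(2)$ is new content of the method; your version makes the corollary self-contained at the cost of reproducing what is essentially the classical Gerlits--Nagy/McCoy covering argument, which, as you note, is independent of the hyperspace formalism.
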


Note that $(2)\Longleftrightarrow (3)$ in the corollary above uses the
observation that 
\begin{equation}
pL(X)=\sup \{L(X^{n}):n\in \omega \},  \label{eq:pL}
\end{equation}%
a proof of which can be found for instance in \cite[Corollary 4.7.3.]{McCoy}.

\begin{proposition}
$\kappa L(U)=t(\kappa (X,\$),U)=t([X,\$],U)=L(U).$
\end{proposition}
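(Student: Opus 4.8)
The plan is to assemble the statement from facts already established and then supply the one missing inequality. Three of the four quantities are linked immediately: equation $(\ref{eq:alphaLind})$ with $\alpha=\kappa(X)$ gives $\kappa L(U)=t(\kappa(X,\$),U)$, equation $(\ref{eq:equalforconthyper})$ gives $t([X,\$],U)=L(U)$, and $(\ref{eq:LlessthanalhpaL})$ gives $L(U)\le\kappa L(U)$. Hence the whole proposition reduces to the single inequality $\kappa L(U)\le L(U)$, equivalently $t(\kappa(X,\$),U)\le t([X,\$],U)$.

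To prove $\kappa L(U)\le L(U)$, I would set $\lambda=L(U)$ and take an arbitrary open $\kappa$-cover $\mathcal{P}\subseteq C(X,\$)$ of $U$; by Proposition \ref{pro:adhalpha} this means $\mathcal{P}$ meets every compact family containing $U$. First note that $\mathcal{P}$ is then an ordinary open cover of $U$: for each $x\in U$ the principal family $\mathcal{O}_X(x)\in\kappa(X)$ is a compact family with $U\in\mathcal{O}_X(x)$, so some member of $\mathcal{P}$ contains $x$. By the definition of the Lindel\"of degree there is $\mathcal{S}\subseteq\mathcal{P}$ with $\mathrm{card}\,\mathcal{S}\le\lambda$ still covering $U$.

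The passage from an open subcover to a $\kappa$-cover is the heart of the matter, and here compactness of the witnessing families is decisive. If $\mathcal{A}=\mathcal{O}_X(\mathcal{A})$ is any compact family with $U\in\mathcal{A}$, then $\bigcup_{S\in\mathcal{S}}S$ is open and contains $U$, so openly isotone upward closure gives $\bigcup_{S\in\mathcal{S}}S\in\mathcal{A}$; by compactness a finite subunion $\bigcup_{S\in\mathcal{S}_0}S$ already lies in $\mathcal{A}$. Thus the ideal base $\mathcal{S}^{\cup}$ generated by $\mathcal{S}$ meets every compact family containing $U$, i.e.\ $\mathcal{S}^{\cup}$ is a $\kappa$-cover of $U$ --- this is exactly the equivalence ``ideal-base open cover $\Longleftrightarrow$ $\kappa$-cover'' furnished by Corollary \ref{cor:idealcover}. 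Since $\mathrm{card}\,\mathcal{S}^{\cup}\le\lambda$, this exhibits a $\kappa$-cover of cardinality $\le\lambda$, yielding $\kappa L(U)\le L(U)$ and completing the chain.

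The main obstacle is precisely this last step: a small open subcover $\mathcal{S}\subseteq\mathcal{P}$ need \emph{not} itself be a $\kappa$-cover, because a compact family can contain a finite union $S_{1}\cup S_{2}$ without containing either $S_{i}$, so one may not take the open subcover directly as the $\kappa$-subcover. What rescues the argument is that $\mathcal{S}^{\cup}$ has the same cardinality as $\mathcal{S}$ and, being an ideal base, converts ``cover'' into ``$\kappa$-cover'' via Corollary \ref{cor:idealcover}. Equivalently, and perhaps more transparently, one can run the argument at the level of tightness: given $U\in\mathrm{adh}_{\kappa(X,\$)}\mathcal{P}$ replace $\mathcal{P}$ by the ideal base $\mathcal{P}^{\cup}$, on which $\mathrm{adh}_{\kappa(X,\$)}$ and $\mathrm{adh}_{[X,\$]}$ coincide by Corollary \ref{cor:idealcover}; apply the tightness $t([X,\$],U)=L(U)$ of $(\ref{eq:equalforconthyper})$ to extract a subfamily of size $\le L(U)$, and close it under finite unions to return, again by Corollary \ref{cor:idealcover}, a $\kappa$-cover of cardinality $\le L(U)$. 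The only care needed is to verify that closing under finite unions neither raises the cardinality nor disturbs the adherence condition.
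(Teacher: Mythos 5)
Your bookkeeping is the same as the paper's: you reduce everything via (\ref{eq:alphaLind}), (\ref{eq:equalforconthyper}) and (\ref{eq:LlessthanalhpaL}) to the single inequality $\kappa L(U)\leq L(U)$, equivalently $t(\kappa(X,\$),U)\leq t([X,\$],U)$. But your proof of that inequality has a genuine gap. Both $\kappa L(U)$ and $t(\kappa(X,\$),U)$ demand the extraction of a \emph{subfamily} of the given $\kappa$-cover $\mathcal{P}$: an $\alpha$-subcover is by definition a subfamily of $\mathcal{P}$ that is again an $\alpha$-cover, and tightness of the topology $\kappa(X,\$)$ at $U$ asks, whenever $U\in\mathrm{cl}_{\kappa(X,\$)}\mathcal{P}$, for $\mathcal{S}\subseteq\mathcal{P}$ with $\mathrm{card}\,\mathcal{S}\leq\lambda$ and $U\in\mathrm{cl}_{\kappa(X,\$)}\mathcal{S}$. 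What you produce is $\mathcal{S}^{\cup}$ (or, in your second variant, a small subfamily of $\mathcal{P}^{\cup}$), and $\mathcal{S}^{\cup}\not\subseteq\mathcal{P}$ in general: its members are finite unions of members of $\mathcal{P}$, which need not belong to $\mathcal{P}$. So what you actually proved is that every $\kappa$-cover of $U$ admits a $\kappa$-cover of cardinality at most $L(U)$ \emph{consisting of finite unions of its members} --- strictly weaker than what either cardinal invariant requires.

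Nor can this be repaired by replacing each member of $\mathcal{S}^{\cup}$ by its finitely many pieces from $\mathcal{P}$: the resulting $\mathcal{S}_{0}\subseteq\mathcal{P}$ still covers $U$, but a compact family $\mathcal{A}\ni U$ may contain $S_{1}\cup S_{2}$ without containing either $S_{i}$ --- exactly the phenomenon you yourself flagged --- so $\mathcal{S}_{0}$ need not be a $\kappa$-cover; the paper's Example (the singletons of a countable discrete space form an open cover, but not an $\omega$-cover, a fortiori not a $\kappa$-cover, while their finite unions do) shows that passing from $\mathcal{P}^{\cup}$ back down to $\mathcal{P}$ genuinely destroys the $\alpha$-cover property. (This de-amalgamation trick is legitimate for plain covers, which is why (\ref{eq:LlessthanalhpaL}) holds, but not for $\alpha$-covers.) The hard content of the proposition is precisely the selection of \emph{single} members of $\mathcal{P}$ inside each compact family, and the paper obtains it by a different mechanism: since $\kappa(X,\$)=T[X,\$]$, the monotonicity of tightness under pretopological and topological reflections, $t(X)\geq t(PX)\geq t(TX)$ \cite[Proposition 2.1]{mynard.uKc}, gives $t(\kappa(X,\$),U)=t(T[X,\$],U)\leq t([X,\$],U)=\chi([X,\$],U)=L(U)$ with genuine subfamilies. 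Corollary \ref{cor:idealcover} cannot substitute for this: it identifies adherences of \emph{ideal bases} across the hyperconvergences, but says nothing about trimming a non-ideal $\kappa$-cover to a small $\kappa$-subcover. To close the gap, invoke (or prove) the reflection inequality $t(T\xi)\leq t(\xi)$ rather than arguing at the level of covers.
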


\begin{proof}
In view of $T[X,\$]=\kappa (X,\$)$ and of (\ref{eq:equalforconthyper}), 
\begin{equation*}
t(\kappa (X,\$),U)=t(T[X,\$],U)\leq t([X,\$],U)=\chi ([X,\$],U)=L(U),
\end{equation*}%
because $t(X)\geq t(PX)\geq t(TX)$ for any convergence space $X$ \cite[%
Proposition 2.1]{mynard.uKc}. In view of Theorem \ref{th:tighntessCalpha}
and (\ref{eq:LlessthanalhpaL}) 
\begin{equation*}
L(U)\leq \kappa L(U)=t(\kappa (X,\$),U).
\end{equation*}
\end{proof}

In particular $L(X)=\kappa L(X)$, hence for the Isbell topology $\kappa (X,%
\mathbb{R})$ and fine Isbell topology $\overline{\kappa }(X,\mathbb{R})$, we
conclude that

\begin{corollary}
\begin{equation*}
L(X)=t(C_{\kappa }(X,\mathbb{R}),\overline{0})=t(C_{\overline{\kappa }}(X,%
\mathbb{R})).
\end{equation*}
\end{corollary}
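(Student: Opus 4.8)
The plan is to assemble the statement from results already in place, with no new construction required. First I would specialize the preceding Proposition to $U=X$, yielding $L(X)=\kappa L(X)$, which is precisely the identity recorded immediately above the statement. Next I would apply Theorem~\ref{th:tighntessCalpha} with $\alpha=\kappa(X)$; this is legitimate since $\kappa(X)$ is a topology on $C(X,\$)$ satisfying $p(X)\subseteq\kappa(X)\subseteq\kappa(X)$, and it gives
\[
\kappa L(X)=t(\kappa(X,\$),X)=t(C_{\kappa}(X,\mathbb{R}),\overline{0}).
\]
Chaining the two identities produces the first equality $L(X)=t(C_{\kappa}(X,\mathbb{R}),\overline{0})$.

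For the passage to the fine Isbell topology, I would invoke the description recalled before this section: $\overline{\kappa}(X,\mathbb{R})$ is translation-invariant and the neighborhood filter of $f$ equals $f+\mathcal{N}_{\kappa}(\overline{0})$. Taking $f=\overline{0}$, the neighborhood filter of $\overline{0}$ for $\overline{\kappa}(X,\mathbb{R})$ is exactly $\mathcal{N}_{\kappa}(\overline{0})$, that is, the neighborhood filter of $\overline{0}$ for the Isbell topology $\kappa(X,\mathbb{R})$. Since both structures are genuine topologies, tightness at $\overline{0}$ is governed solely by this neighborhood filter: whether $\overline{0}$ lies in the closure of a set $S$, and whether it lies in the closure of a subset $A\subseteq S$, each reduce to the mesh condition $\mathcal{N}(\overline{0})\#S$ (respectively $\mathcal{N}(\overline{0})\#A$). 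Hence $t(C_{\overline{\kappa}}(X,\mathbb{R}),\overline{0})=t(C_{\kappa}(X,\mathbb{R}),\overline{0})$.

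Finally, because $\overline{\kappa}(X,\mathbb{R})$ is translation-invariant, every translation is a homeomorphism carrying $\overline{0}$ to an arbitrary $f$, so the tightness is the same at all points and $t(C_{\overline{\kappa}}(X,\mathbb{R}))=t(C_{\overline{\kappa}}(X,\mathbb{R}),\overline{0})$. Concatenating the three equalities delivers $L(X)=t(C_{\kappa}(X,\mathbb{R}),\overline{0})=t(C_{\overline{\kappa}}(X,\mathbb{R}))$.

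The argument is essentially bookkeeping, and I foresee no serious obstacle. The only step meriting care is the insensitivity of tightness at $\overline{0}$ to the replacement of the Isbell by the fine Isbell topology; this rests on the two topologies sharing the same neighborhood filter at $\overline{0}$, together with the fact that pointwise tightness is determined by that neighborhood filter alone (through the mesh characterization of closure) rather than by the global topology. Note also that for $C_{\kappa}(X,\mathbb{R})$ translations need not be continuous, which is why the first equality must be stated locally at $\overline{0}$, whereas translation-invariance of $\overline{\kappa}(X,\mathbb{R})$ is exactly what promotes the local value to the global tightness in the last equality.
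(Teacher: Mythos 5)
Your proposal is correct and takes essentially the same route as the paper, which obtains the first equality exactly as you do --- specializing the preceding proposition to $U=X$ (giving $L(X)=\kappa L(X)=t(\kappa(X,\$),X)$) and applying Theorem~\ref{th:tighntessCalpha} with $\alpha=\kappa(X)$ --- and handles the fine Isbell topology via the same two facts from \cite[Theorem 4.1]{DM_Isbell}, namely translation invariance and the identity of the $\overline{\kappa}(X,\mathbb{R})$-neighborhood filter of $f$ with $f+\mathcal{N}_{\kappa}(\overline{0})$. Your explicit mesh-characterization argument that tightness at $\overline{0}$ depends only on the neighborhood filter there merely spells out the step the paper leaves implicit in its recorded equality $t(C_{\overline{\kappa}}(X,\mathbb{R}))=t(C_{\kappa}(X,\$),X)$.
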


It was shown in \cite{BallHager.C(X)} that if $X$ is \v{C}ech-complete then $%
t(C_{k}(X,\mathbb{R}))=L(X).$ We can refine this result as follows (%
\footnote{%
Every \v{C}ech-complete space is consonant \cite[Theorem 4.1]{DGL.kur}, but
not conversely.}):

\begin{corollary}
If $X$ is a (completely regular) consonant topological space then%
\begin{equation*}
t(C_{k}(X,\mathbb{R}))=L(X).
\end{equation*}
\end{corollary}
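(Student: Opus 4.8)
The plan is to transfer the computation to the hyperspace $C(X,\$)$, where the tightness becomes a covering invariant, and then use consonance to collapse the compact-open case onto the Isbell (Scott) case. The compact-open topology on $C(X,\mathbb{R})$ is the $\alpha$-dual topology for $\alpha=k(X)$, the topology of compactly generated families, and one always has $p(X)\subseteq k(X)\subseteq \kappa(X)$. Since the tightness corresponds to the $\mathbb{F}_1$-steady and $\mathbb{F}_0$-composable class $\mathbb{F}_1^{\lozenge}$, Theorem~\ref{th:tighntessCalpha} applies with $\alpha=k(X)$; combined with the fact that $C_k(X,\mathbb{R})$ is a topological group (so its tightness is the same at every point), this yields
\begin{equation*}
t(C_k(X,\mathbb{R}))=t(C_k(X,\mathbb{R}),\overline{0})=kL(X).
\end{equation*}

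It remains to show that consonance forces $kL(X)=L(X)$. Here I would invoke the characterization of consonance as the statement that every compact family of open subsets of $X$ is compactly generated, that is, that $k(X)=\kappa(X)$ as topologies on $C(X,\$)$. Since the two collections of families then coincide, the notions of open $k$-cover and open $\kappa$-cover agree on every $U$, so that $kL(X)=\kappa L(X)$. The preceding Proposition already records $\kappa L(X)=L(X)$, and I thereby obtain the chain
\begin{equation*}
t(C_k(X,\mathbb{R}))=kL(X)=\kappa L(X)=L(X).
\end{equation*}

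The only step that uses the hypothesis, and the real content of the proof, is the equality $kL(X)=\kappa L(X)$, which rests entirely on recognizing consonance as the identity $k(X)=\kappa(X)$ of the compactly-generated and compact hyperspace topologies; after that the two covering numbers are equal by definition. Everything else is unconditional: the transfer identity $t(C_k(X,\mathbb{R}))=kL(X)$ is a direct instance of Theorem~\ref{th:tighntessCalpha} (or of~(\ref{eq:tightalphaD})), and $\kappa L(X)=L(X)$ has already been established. Without consonance one keeps only the inequality $L(X)\leq kL(X)$ from~(\ref{eq:LlessthanalhpaL}), so the role of consonance is precisely to furnish the reverse inequality by equating the two Lindel\"{o}f-type numbers.
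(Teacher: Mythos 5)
Your proposal is correct and takes essentially the same route as the paper's proof: transfer the tightness of $C_{k}(X,\mathbb{R})$ at $\overline{0}$ to the hyperspace via Theorem~\ref{th:tighntessCalpha} (equivalently (\ref{eq:networkcase2}), with homogeneity of the topological group $C_{k}(X,\mathbb{R})$), use consonance to identify $k(X)$ with $\kappa (X)$, that is, $C_{k}(X,\$)$ with $T[X,\$]$, and then invoke the already established equality $\kappa L(X)=t(T[X,\$],X)=L(X)$. Your phrasing of the consonance step at the level of covering numbers ($kL(X)=\kappa L(X)$) rather than of hyperspace topologies is only a cosmetic difference, since $\alpha L(X)=t(\alpha (X,\$),X)$ by (\ref{eq:alphaLind}).
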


\begin{proof}
$X$ is consonant if and only if $T[X,\$]=C_{k}(X,\$)$. In view of $\left( %
\ref{eq:networkcase2}\right) $, we have $t(C_{k}(X,\mathbb{R}))=t\left(
C_{k}(X,\$),X\right) $. But $t\left( C_{k}(X,\$),X\right) =t\left(
T[X,\$],X\right) =L(X)$, which concludes the proof.
\end{proof}

The natural convergence $[X,\mathbb{R}]$ is a convergence group, in
particular translation-invariant. Therefore, in view of Corollary \ref%
{cor:iffinstances} (2),%
\begin{equation}
\chi ([X,\mathbb{R}])=\chi ([X,\$],X),
\end{equation}%
because the class $\mathbb{F}_{\lambda }$ is $\$$-compatible, $\mathbb{F}%
_{0} $-composable, and countably upper closed for every cardinal $\lambda $.
Although the class of countably tight filters is not countably upper closed,
we are in a position to see that $t\left( [X,\mathbb{R}]\right) =t([X,\$],X)$%
. Indeed, $t([X,\mathbb{R}])\leq \chi ([X,\mathbb{R}])$ and, in view of
Corollary \ref{cor:uptodown} (2), $t(P[X,\$],X)\leq t([X,\mathbb{R}]),$
because $(\mathbb{F}_{1}^{\lozenge })^{\bigwedge }=\mathbb{F}_{1}^{\lozenge
}.$ Therefore%
\begin{equation*}
L(X)=t(T[X,\$],X)\leq t(P[X,\$],X)\leq t([X,\mathbb{R}])\leq \chi ([X,%
\mathbb{R}])=\chi ([X,\$],X)=L(X).
\end{equation*}

\begin{corollary}
\label{cor:continuousconv} 
\begin{eqnarray*}
L(X) &=&\chi ([X,\$],X)=t([X,\$],X)=t(T[X,\$],X) \\
&=&\chi ([X,\mathbb{R}])=t([X,\mathbb{R}]).
\end{eqnarray*}
\end{corollary}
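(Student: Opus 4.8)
The plan is to assemble Corollary~\ref{cor:continuousconv} from results already established, rather than proving anything from scratch. The identity $L(X)=\chi([X,\$],X)=t([X,\$],X)$ is exactly \eqref{eq:equalforconthyper}, specialized to $U=X$; and $t(T[X,\$],X)=L(X)$ follows from the Proposition immediately preceding, since $T[X,\$]=\kappa(X,\$)$ gives $t(\kappa(X,\$),X)=L(X)$. So the only genuinely new content is the pair of equalities $\chi([X,\mathbb{R}])=L(X)$ and $t([X,\mathbb{R}])=L(X)$, transferring the hyperspace computation across the $\Uparrow$ construction to the functional space.

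For the character, I would invoke the fact that $[X,\mathbb{R}]=[X,\$]^{\Uparrow}$ together with translation-invariance (the natural convergence is a convergence group), so it suffices to compute $\chi([X,\mathbb{R}],\overline{0})$. The class $\mathbb{F}_{\lambda}$ is $\mathbb{F}_0$-composable, countably upper closed, and $\$$-compatible, so the hypotheses of Corollary~\ref{cor:iffinstances}(2) are met (with $\tau=[X,\$]$, whose upward lift is $[X,\mathbb{R}]$): this yields $\chi([X,\mathbb{R}])=\chi([X,\$],X)=L(X)$. The subtle point here is that Corollary~\ref{cor:iffinstances}(2) requires either $X$ normal or $\mathbb{B}$ being $\$$-compatible; since $\mathbb{F}_{\lambda}$ is $\$$-compatible, no separation assumption on $X$ beyond complete regularity is needed, and this is worth flagging explicitly.

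For the tightness, the difficulty is that the class $\mathbb{F}_1^{\lozenge}$ of countably tight filters is \emph{not} countably upper closed, so Corollary~\ref{cor:iffinstances}(2) does not apply directly. The route is to sandwich $t([X,\mathbb{R}])$ between $L(X)$ and $\chi([X,\mathbb{R}])$, both already known to equal $L(X)$. On the upper side, $t([X,\mathbb{R}])\le\chi([X,\mathbb{R}])=L(X)$ is automatic since tightness never exceeds character. On the lower side, I would use Corollary~\ref{cor:uptodown}(2): since $[X,\mathbb{R}]=[X,\$]^{\Uparrow}$ is $\mathbb{F}_1^{\lozenge}$-based at $\overline{0}$, that corollary gives that $P[X,\$]$ is $(\mathbb{F}_1^{\lozenge})^{\wedge}$-based at $X$; the key observation is that $(\mathbb{F}_1^{\lozenge})^{\wedge}=\mathbb{F}_1^{\lozenge}$, so $t(P[X,\$],X)\le t([X,\mathbb{R}])$. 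Finally $L(X)=t(T[X,\$],X)\le t(P[X,\$],X)$, using the general inequality $t(TX)\le t(PX)$ from \cite[Proposition~2.1]{mynard.uKc}.

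Chaining these yields the single display
\begin{equation*}
L(X)=t(T[X,\$],X)\le t(P[X,\$],X)\le t([X,\mathbb{R}])\le\chi([X,\mathbb{R}])=\chi([X,\$],X)=L(X),
\end{equation*}
forcing every term to equal $L(X)$ and establishing the corollary. The main obstacle is the failure of countable upper closure for countable tightness, and the crux is recognizing that the idempotence $(\mathbb{F}_1^{\lozenge})^{\wedge}=\mathbb{F}_1^{\lozenge}$ lets Corollary~\ref{cor:uptodown}(2) supply the decisive lower bound that the stronger ``if and only if'' machinery cannot provide here.
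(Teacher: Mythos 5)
Your proposal is correct and follows essentially the same route as the paper: the character equality via Corollary~\ref{cor:iffinstances}(2) with the $\$$-compatible, $\mathbb{F}_{0}$-composable, countably upper closed class $\mathbb{F}_{\lambda}$, and the tightness via the sandwich $L(X)=t(T[X,\$],X)\leq t(P[X,\$],X)\leq t([X,\mathbb{R}])\leq \chi([X,\mathbb{R}])=\chi([X,\$],X)=L(X)$, using Corollary~\ref{cor:uptodown}(2) together with $(\mathbb{F}_{1}^{\lozenge})^{\bigwedge}=\mathbb{F}_{1}^{\lozenge}$ to circumvent the failure of countable upper closure for countably tight filters. Your explicit flagging of why no normality assumption is needed matches the paper's remark, so nothing further is required.
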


Note that $L(X)=\chi ([X,\mathbb{R}])$ is a corollary of \cite[Theorem 1]%
{feldman} of Feldman. However, the surprising fact that $\chi ([X,\mathbb{R}%
])=t([X,\mathbb{R}])$ seems to be entirely new.

As we have seen, character and tightness coincide for $[X,\$]$ as well as
for $[X,\mathbb{R}],$ but they do not for $\alpha (X,\$)$ (and therefore not
for $\alpha (X,\mathbb{R})$). By definition the character of $C_{\alpha
}(X,\$)$ at $U$ does not exceed $\lambda $ if there is $\left\{ \mathcal{A}%
_{\beta }:\beta \leq \lambda \right\} \subseteq \alpha $ such that $U\in 
\mathcal{A}_{\beta }$ for each $\beta $ and for each $\mathcal{A}\in \alpha $
such that $U\in \mathcal{A},$ there is $\beta \leq \lambda $ such that $%
\mathcal{A}_{\beta }\subseteq \mathcal{A}$. In particular $\chi (C_{\alpha
}(X,\$),X)\leq \lambda $ if there is a subset $\gamma $ of $\alpha $ of
cardinality at most $\lambda $ such that each element of $\alpha $ contains
an element of $\gamma $. In the particular case where $\alpha =\alpha _{%
\mathcal{D}}$ for a network $\mathcal{D}$ of closed subsets of $X,$ the
condition above translates to: $\chi (C_{\alpha _{\mathcal{D}}}(X,\$),X)\leq
\lambda $ if there is $\mathcal{S}\subseteq \mathcal{D}$ with $|\mathcal{S}%
|\leq \lambda $ such that every element of $\mathcal{D}$ is contained in an
element of $\mathcal{S}$, that is, if $\mathcal{D}$ contains a $\mathcal{D}$%
-cover (in the sense of \cite{McCoy}) of cardinality at most $\lambda .$ In
other words, 
\begin{equation*}
\chi (C_{\alpha _{\mathcal{D}}}(X,\$),X)=\mathcal{D}a(X),
\end{equation*}%
where $\mathcal{D}a(X)$ is the $\mathcal{D}$-Arens number of $X$, as defined
in \cite{McCoy}. In view of Corollary \ref{cor:iffinstances} (1), we recover 
\cite[Theorem 4.4.1]{McCoy}:

\begin{corollary}
If $\mathcal{D}$ is a network of compact subsets of $X$ then:%
\begin{equation*}
\chi (C_{\alpha _{\mathcal{D}}}(X,\mathbb{R}))=\chi (C_{\alpha _{\mathcal{D}%
}}(X,\$),X)=\mathcal{D}a(X).
\end{equation*}
\end{corollary}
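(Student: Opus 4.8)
The plan is to combine three facts already available: the identity $\chi(C_{\alpha_{\mathcal{D}}}(X,\$),X)=\mathcal{D}a(X)$ established in the paragraph just before the statement, the transfer equivalence of Corollary~\ref{cor:iffinstances}(1), and the homogeneity of $C_{\alpha_{\mathcal{D}}}(X,\mathbb{R})$ coming from its topological group structure. The only hypothesis that must be checked before the transfer corollary applies is $\alpha_{\mathcal{D}}\subseteq\kappa(X)$: since $\mathcal{D}$ consists of compact subsets of the (completely regular) space $X$, each family $\mathcal{O}_X(D)$ with $D$ compact is a compact family — if an arbitrary union of open sets lies in $\mathcal{O}_X(D)$, i.e.\ contains $D$, then by compactness finitely many of them already cover $D$ — so indeed $\alpha_{\mathcal{D}}=\mathcal{O}_X^{\natural}(\mathcal{D})\subseteq\kappa(X)$.

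First I would recall that character is the cardinal invariant attached to the scale of classes $\mathbb{F}_{\lambda}$ of filters admitting a base of cardinality less than $\aleph_{\lambda}$: a convergence space is $\mathbb{F}_{\lambda}$-based at a point precisely when its character there is bounded by the corresponding cardinal, and each such $\mathbb{F}_{\lambda}$ is $\mathbb{F}_0$-composable and $\mathbb{F}_1$-steady. Applying Corollary~\ref{cor:iffinstances}(1) with $\mathbb{B}=\mathbb{F}_{\lambda}$ then gives, for every $\lambda$, that $C_{\alpha_{\mathcal{D}}}(X,\mathbb{R})$ is $\mathbb{F}_{\lambda}$-based at $\overline{0}$ if and only if $C_{\alpha_{\mathcal{D}}}(X,\$)$ is $\mathbb{F}_{\lambda}$-based at $X$. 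Letting $\lambda$ range over all cardinals upgrades these equivalences to the equality of invariants
\[
\chi(C_{\alpha_{\mathcal{D}}}(X,\mathbb{R}),\overline{0})=\chi(C_{\alpha_{\mathcal{D}}}(X,\$),X),
\]
which is exactly the instance $\gamma=\chi$ of (\ref{eq:networkcase2}).

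Next I would drop the base point on the left. Because $\mathcal{D}$ is a network, $C_{\alpha_{\mathcal{D}}}(X,\mathbb{R})$ is a topological group, so its character is the same at every point and equals $\chi(C_{\alpha_{\mathcal{D}}}(X,\mathbb{R}))$. Chaining this with the displayed equality and with the already-proved $\chi(C_{\alpha_{\mathcal{D}}}(X,\$),X)=\mathcal{D}a(X)$ yields the asserted triple equality, recovering \cite[Theorem~4.4.1]{McCoy}.

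No single step is a genuine obstacle; the work lies entirely in the bookkeeping. The two points demanding care are the verification $\alpha_{\mathcal{D}}\subseteq\kappa(X)$ that licenses Corollary~\ref{cor:iffinstances}(1), and the passage from an if-and-only-if at each individual $\lambda$ to an equality of cardinal functions, which is where matching the definition of character to $\mathbb{F}_{\lambda}$-basedness across the whole scale is essential. The group structure supplied by the network hypothesis is precisely what allows the left-hand character to be stated globally rather than only at $\overline{0}$.
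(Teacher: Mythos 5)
Your proposal is correct and takes essentially the same route as the paper: the paper likewise combines the identity $\chi(C_{\alpha_{\mathcal{D}}}(X,\$),X)=\mathcal{D}a(X)$ obtained in the preceding paragraph with Corollary~\ref{cor:iffinstances}~(1) applied to the $\mathbb{F}_{0}$-composable, $\mathbb{F}_{1}$-steady classes $\mathbb{F}_{\lambda}$ --- that is, the instance $\gamma=\chi$ of (\ref{eq:networkcase2}) --- using the topological group structure of $C_{\alpha_{\mathcal{D}}}(X,\mathbb{R})$ to pass from the character at $\overline{0}$ to the global character. Your explicit verifications that $\alpha_{\mathcal{D}}\subseteq\kappa(X)$ and that the pointwise equivalences over all $\lambda$ assemble into an equality of cardinal invariants are details the paper leaves implicit, but they agree with its argument.
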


Since $C_{\alpha _{\mathcal{D}}}(X,\mathbb{R})$ is a topological group it is
metrizable whenever it is first-countable. Therefore, instances of this
result include that $C_{p}(X,\mathbb{R})$ is metrizable if and only if $X$
is countable, and that $C_{k}(X,\mathbb{R})$ is metrizable if and only if $X$
is hemicompact.

We can more generally define, for $\alpha \subseteq \kappa (X),$ the $\alpha 
$-Arens number $\alpha a(X)$ of $X$ as the least cardinal $\lambda $ such
that there is a subset $\gamma $ of $\alpha $ of cardinality at most $%
\lambda $ such that each element of $\alpha $ contains an element of $\gamma 
$, and we have%
\begin{equation*}
\chi (C_{\alpha }(X,\mathbb{R}),\overline{0})=\chi (C_{\alpha
}(X,\$),X)=\alpha a(X).
\end{equation*}%
The $\alpha $-Arens number seems however somewhat intractable unless $\alpha
=\alpha _{\mathcal{D}}$ for a network $\mathcal{D}$ of closed subsets of $X.$

\section{Fan-tightness and strong fan-tightness}

As the classes of countably fan-tight and strongly countable fan-tight
filters (\ref{table}) are $\mathbb{F}_{1}$-steady and $\mathbb{F}_{0}$%
-composable, Corollary \ref{cor:iffinstances} (1) applies to the effect that%
\begin{eqnarray}
\mathrm{vet}(C_{\alpha }(X,\$),X) &=&\mathrm{vet}(C_{\alpha }(X,\mathbb{R}%
),\overline{0});  \label{eq:vetOtoC} \\
\mathrm{vet}^{\ast }(C_{\alpha }(X,\$),X) &=&\mathrm{vet}%
^{\ast }(C_{\alpha }(X,\mathbb{R}),\overline{0}).  \notag
\end{eqnarray}

It is straightforward from the definitions and Proposition \ref{pro:adhalpha}
that $\mathrm{vet}(C_{\alpha }(X,\$),U)$ (resp. $\mathrm{vet}^{\ast
}(C_{\alpha }(X,\$),X)$) is equal to the minimal cardinality $\lambda $ such
that if for each family $\{\mathcal{P}_{\gamma }:\gamma <\lambda \}$ of open 
$\alpha $-covers of $U$ there are subsets $\mathcal{V}_{\gamma }\subseteq 
\mathcal{P}_{\gamma }$ of cardinality less than $\lambda $ (resp. $P_{\gamma
}\in \mathcal{P}_{\gamma })$ for each $\gamma <\lambda ,$ such that $%
\bigcup_{\gamma <\lambda }\mathcal{V}_{\gamma }$ (resp. $\{P_{\gamma
}:\gamma <\lambda \}$) is an $\alpha $-cover of $U$. Let us call the
cardinal numbers defined above the $\alpha $-\emph{Hurewicz} $\alpha H(X)$
and $\alpha $-\emph{Rothberger} $\alpha R(X)$ \emph{numbers of} $X,$
respectively. In this terminology, we have: 
\begin{eqnarray}
\mathrm{vet}(C_{\alpha }(X,\$),U) &=&\alpha H(U),  \label{eq:hyperH} \\
\mathrm{vet}^{\ast }(C_{\alpha }(X,\$),U) &=&\alpha R(U),
\label{eq:hyperR}
\end{eqnarray}%
for each open subset $U$ of $X$. In particular, \cite[Theorem 1]%
{selectionhyper} and \cite[Theorem 2]{selectionhyper} stating that $%
cC_{p}(X,\$)$ and $cC_{k}(X,\$)$ have countable strong fan-tightness if and
only if $pR(U)=\omega $ and $kR(U)=\omega $ for each open subset $U$ of $X,$
respectively, are instances of $\left( \ref{eq:hyperR}\right) $ for $\alpha
=p(X)$ and $\alpha =k(X).$ Similarly, \cite[Theorem 9]{selectionhyper} and 
\cite[Theorem 10]{selectionhyper} characterizing countable fan-tightness of $%
cC_{p}(X,\$)$ and $cC_{k}(X,\$)$ respectively, are instance of $\left( \ref%
{eq:hyperH}\right) $ for $\alpha =p(X)$ and $\alpha =k(X)$ respectively.

Combining (\ref{eq:vetOtoC}) and $\left( \ref{eq:hyperH}\right) ,$ we have: 
\begin{eqnarray}
\mathrm{vet}(C_{\alpha }(X,\$),X) &=&\mathrm{vet}(C_{\alpha }(X,\mathbb{R}%
),\overline{0})=\alpha H(X);  \label{eq:CH} \\
\mathrm{vet}^{\ast }(C_{\alpha }(X,\$),X) &=&\mathrm{vet}%
^{\ast }(C_{\alpha }(X,\mathbb{R}),\overline{0})=\alpha R(X).
\label{eq:CR}
\end{eqnarray}

Let $s=\{\mathcal{O}(x):x\in X\}$. Note that $C_{s}(X,\$)=C_{p}(X,\$)$. An
infinite topological space $X$ has the Hurewicz property \cite{arh.hurewicz}
(also often called Menger Property, e.g. \cite{selectionhyper}) if and only
if $sH(X):=H(X)=\omega $ and $X$ has the Rothberger property (e.g., \cite%
{millerfremlin}, \cite{scheepers.combI}) if and only if $sR(X):=R(X)=\omega $%
. An argument similar to \cite[Corollary 4.7.3.]{McCoy} was used to show $%
\left( \ref{eq:pL}\right) $ and can be adapted to show that%
\begin{eqnarray}
pH(X) &=&\sup \{H(X^{n}):n\in \omega \};  \label{eq:pH} \\
pR(X) &=&\sup \{R(X^{n}):n\in \omega \}.  \notag
\end{eqnarray}

Note that $\left( \ref{eq:CH}\right) $ particularizes to \cite[Theorem 1]%
{lintightness} when $\alpha =\alpha _{\mathcal{D}}$ where $\mathcal{D}$ is a
network of compact subsets of $X$. Combined with $\left( \ref{eq:pH}\right) $%
, we obtain:

\begin{corollary}
\begin{enumerate}
\item \cite{arh.hurewicz},\cite[Theorem 2]{lintightness} 
\begin{equation*}
\mathrm{vet}(C_{p}(X,\mathbb{R}))=\sup \{H(X^{n}):n\in \omega \},
\end{equation*}%
so that $C_{p}(X,\mathbb{R})$ is countably fan-tight if and only if $X^{n}$
has the Hurewicz property for each $n<\omega $.

\item 
\begin{equation*}
\mathrm{vet}^{\ast }(C_{p}(X,\mathbb{R}))=\sup \{R(X^{n}):n\in
\omega \},
\end{equation*}%
so that $C_{p}(X,\mathbb{R})$ is countably strongly fan-tight if and only if 
$X^{n}$ has the Rothberger property for each $n<\omega $.
\end{enumerate}
\end{corollary}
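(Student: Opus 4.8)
The plan is to specialize the general transfer identities already established to the pointwise case $\alpha = p(X)$ and then invoke the product formulas for the $p$-Hurewicz and $p$-Rothberger numbers. First I would observe that $p(X) = \alpha_{\mathcal{D}}$ for the compact network $\mathcal{D} = [X]^{<\infty}$ of finite subsets of $X$, so that $C_{p}(X,\mathbb{R})$ is a topological group. Translation-invariance then guarantees that the global invariants agree with their pointed versions at the origin, that is, $\mathrm{vet}(C_{p}(X,\mathbb{R})) = \mathrm{vet}(C_{p}(X,\mathbb{R}),\overline{0})$ and $\mathrm{vet}^{\ast}(C_{p}(X,\mathbb{R})) = \mathrm{vet}^{\ast}(C_{p}(X,\mathbb{R}),\overline{0})$.

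Next I would apply $(\ref{eq:CH})$ and $(\ref{eq:CR})$ with $\alpha = p(X)$, which read
\[
\mathrm{vet}(C_{p}(X,\mathbb{R}),\overline{0}) = pH(X), \qquad
\mathrm{vet}^{\ast}(C_{p}(X,\mathbb{R}),\overline{0}) = pR(X),
\]
and combine them with the evaluations $(\ref{eq:pH})$, namely $pH(X) = \sup\{H(X^{n}) : n \in \omega\}$ and $pR(X) = \sup\{R(X^{n}) : n \in \omega\}$. Chaining the three equalities yields the two displayed formulas of the corollary.

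The remaining ``so that'' clauses are immediate cardinal bookkeeping: $C_{p}(X,\mathbb{R})$ is countably fan-tight exactly when $\mathrm{vet}(C_{p}(X,\mathbb{R})) = \omega$, and since this invariant equals $\sup\{H(X^{n}) : n \in \omega\}$, the supremum is $\omega$ precisely when $H(X^{n}) = \omega$ for every $n < \omega$, that is, when each finite power $X^{n}$ has the Hurewicz property. The strong fan-tight/Rothberger assertion is verbatim the same with $R$ in place of $H$ and $\mathrm{vet}^{\ast}$ in place of $\mathrm{vet}$.

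There is no serious obstacle: the statement is a direct specialization of results already in hand. The only nonroutine ingredient is the pair of product formulas $(\ref{eq:pH})$, which are established separately by adapting the McCoy--Ntantu computation of $pL(X)$ behind $(\ref{eq:pL})$; granting them, the argument is purely formal. The single point deserving a word of justification is the passage from the value at $\overline{0}$ to the unpointed invariant, and this is exactly what the topological group structure of $C_{p}(X,\mathbb{R})$ supplies.
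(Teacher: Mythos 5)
Your proposal is correct and follows essentially the same route as the paper: the paper also obtains the corollary by specializing the chain $\mathrm{vet}(C_{\alpha}(X,\mathbb{R}),\overline{0})=\alpha H(X)$ (and its $\mathrm{vet}^{\ast}$/$\alpha R$ analogue) from (\ref{eq:CH})--(\ref{eq:CR}) to $\alpha=p(X)$ and combining with the product formulas (\ref{eq:pH}), with the topological group structure of $C_{p}(X,\mathbb{R})$ (via (\ref{eq:networkcase2})) supplying exactly the passage from the invariant at $\overline{0}$ to the unpointed one that you flag. Your explicit identification $p(X)=\alpha_{\mathcal{D}}$ for $\mathcal{D}=[X]^{<\infty}$ is precisely the implicit step in the paper's one-line derivation, so nothing is missing.
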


On the other hand, for $\alpha =k(X)$, we obtain in particular:

\begin{corollary}
\cite{kocinac.Ck}

\begin{enumerate}
\item $C_{k}(X,\mathbb{R})$ is countably fan-tight if and only if for every
sequence $(\mathcal{P}_{n})_{n<\omega }$ of $k$-covers, there are finite
subsets $\mathcal{V}_{n}\subseteq \mathcal{P}_{n}$ for each $n$ such that $%
\bigcup_{n<\omega }\mathcal{V}_{n}$ is a $k$-cover.

\item $C_{k}(X,\mathbb{R})$ is countably strongly fan-tight if and only if
for every sequence $(\mathcal{P}_{n})_{n<\omega }$ of $k$-covers, there are $%
P_{n}\in \mathcal{P}_{n}$ for each $n$ such that $\{P_{n}:n<\omega \}$ is a $%
k$-cover.
\end{enumerate}
\end{corollary}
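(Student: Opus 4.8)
The plan is to obtain both equivalences as the special case $\alpha = k(X)$ of the cardinal-function identities (\ref{eq:CH}) and (\ref{eq:CR}), so that no genuinely new argument is needed beyond a careful unwinding of definitions. First I would observe that the compact-open topology on $C(X,\mathbb{R})$ is exactly the family-open topology $C_{k(X)}(X,\mathbb{R})$ determined by the topology $k(X)$ of compactly generated families, and that $k(X) \subseteq \kappa(X)$; this is what allows the transfer results of the preceding sections to apply. I would also recall the identification made after Proposition \ref{pro:adhalpha}, namely that a family $\mathcal{P} \subseteq C(X,\$)$ is a $k(X)$-cover precisely when it is a $k$-cover in the classical sense, so that the $\alpha$-Hurewicz and $\alpha$-Rothberger numbers for $\alpha = k(X)$ are the classical numbers $kH(X)$ and $kR(X)$.

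Next I would invoke (\ref{eq:CH}) and (\ref{eq:CR}) with $\alpha = k(X)$, yielding $\mathrm{vet}(C_k(X,\mathbb{R}),\overline{0}) = kH(X)$ and $\mathrm{vet}^{\ast}(C_k(X,\mathbb{R}),\overline{0}) = kR(X)$. Since the collection $\mathcal{K}(X)$ of all compact subsets is a compact network on the (completely regular) space $X$, the space $C_k(X,\mathbb{R}) = C_{\alpha_{\mathcal{K}(X)}}(X,\mathbb{R})$ is a topological group, hence homogeneous; consequently $\mathrm{vet}$ and $\mathrm{vet}^{\ast}$ take the same value at every point, and the pointwise identities at $\overline{0}$ become $\mathrm{vet}(C_k(X,\mathbb{R})) = kH(X)$ and $\mathrm{vet}^{\ast}(C_k(X,\mathbb{R})) = kR(X)$.

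Finally I would read off the statements by unwinding $kH(X) \le \omega$ and $kR(X) \le \omega$ through the definitions of the $\alpha$-Hurewicz and $\alpha$-Rothberger numbers recorded before (\ref{eq:hyperH})--(\ref{eq:hyperR}). Countable fan-tightness of $C_k(X,\mathbb{R})$ is the assertion $\mathrm{vet}(C_k(X,\mathbb{R})) \le \omega$, equivalently $kH(X) \le \omega$; taking $\lambda = \omega$ in the defining least-cardinal property and reading ``cardinality less than $\omega$'' as ``finite'', this says exactly that for every sequence $(\mathcal{P}_n)_{n<\omega}$ of open $k$-covers there are finite $\mathcal{V}_n \subseteq \mathcal{P}_n$ with $\bigcup_{n<\omega}\mathcal{V}_n$ a $k$-cover, which is (1). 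Replacing the finite subsets $\mathcal{V}_n$ by single selections $P_n \in \mathcal{P}_n$, the same reading of $kR(X) \le \omega$ gives (2).

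I expect the only delicate point to be bookkeeping rather than substance, since the real work is already packaged in Corollary \ref{cor:iffinstances}(1) and in the identities (\ref{eq:hyperH})--(\ref{eq:vetOtoC}). Specifically, one must confirm that the classes of countably fan-tight and strongly countably fan-tight filters are $\mathbb{F}_1$-steady and $\mathbb{F}_0$-composable, which is exactly the hypothesis under which Corollary \ref{cor:iffinstances}(1) produces (\ref{eq:CH}) and (\ref{eq:CR}); this was noted at the head of the present section. One must also verify that the formal definition of $kH(X)$ (resp. $kR(X)$) as a least cardinal specializes, at $\lambda = \omega$, to the literal countable selection principle appearing in the statement, which is the content of (\ref{eq:hyperH}) and (\ref{eq:hyperR}). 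Both checks are routine.
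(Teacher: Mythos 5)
Your proposal is correct and follows exactly the paper's route: the authors obtain this corollary by specializing the identities (\ref{eq:CH}) and (\ref{eq:CR}) to $\alpha =k(X)$, using the already-noted facts that $k(X)$-covers are classical $k$-covers and that $C_{k}(X,\mathbb{R})=C_{\alpha _{\mathcal{K}(X)}}(X,\mathbb{R})$ is a topological group (so the invariants at $\overline{0}$ are global), which is precisely your argument. The details you add --- homogeneity to pass from $\overline{0}$ to the whole space, and unwinding $kH(X)\leq \omega $ and $kR(X)\leq \omega $ at $\lambda =\omega $ into the two selection principles --- are the same bookkeeping the paper leaves implicit.
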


\section{Fr\'{e}chet properties}

An obstacle to applying the results of Section \ref{sec:transferclasses} to
the Fr\'{e}chet property is that the class of Fr\'{e}chet filters, while $%
\mathbb{F}_{0}$-composable, fails to be $\mathbb{F}_{1}$-steady. The results
apply to the strong Fr\'{e}chet property though, whose associated class of
filters is both $\mathbb{F}_{0}$-composable and $\mathbb{F}_{1}$-steady. We
have seen that tightness and character coincide for $[X,\$]$ and $[X,\mathbb{%
R}]$. Therefore these spaces are Fr\'{e}chet if and only if they are
strongly Fr\'{e}chet if and only if they are countably tight if and only if
they are first-countable. On the other hand,

\begin{theorem}
Let $p\left( X\right) \subseteq \alpha \subseteq \kappa (X)$. The following
are equivalent:

\begin{enumerate}
\item $C_{\alpha }(X,\mathbb{R})$ is strongly Fr\'{e}chet at $\overline{0}$;

\item $C_{\alpha }(X,\$)$ is strongly Fr\'{e}chet at $X$;

\item For every decreasing sequence $\left( \mathcal{P}_{n}\right) _{n\in
\omega }$ of open $\alpha $-covers, for each $n<\omega $ there exists $%
P_{n}\in \mathcal{P}_{n}$ so that each $\mathcal{A}\in \alpha $ contains all
but finitely many of the elements of $\left( P_{n}\right) _{n\in \omega }$.
\end{enumerate}
\end{theorem}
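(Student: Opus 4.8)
The plan is to obtain $(1)\Leftrightarrow(2)$ for free from the transfer machinery, and then to unfold $(2)$ into the selection statement $(3)$ by rewriting adherences and limits in the hyperspace in the covering language of Proposition~\ref{pro:adhalpha}. For $(1)\Leftrightarrow(2)$: by the correspondence in \eqref{table}, strong Fr\'echetness at a point is exactly $\mathbb{F}_{1}^{\triangle}$-basedness there, and the class $\mathbb{F}_{1}^{\triangle}$ is both $\mathbb{F}_{0}$-composable and $\mathbb{F}_{1}$-steady. Since $\alpha\subseteq\kappa(X)$, Corollary~\ref{cor:iffinstances}(1) applies with $\mathbb{B}=\mathbb{F}_{1}^{\triangle}$ and gives immediately that $C_{\alpha}(X,\mathbb{R})$ is $\mathbb{F}_{1}^{\triangle}$-based at $\overline{0}$ if and only if $C_{\alpha}(X,\$)$ is $\mathbb{F}_{1}^{\triangle}$-based at $X$, that is, $(1)\Leftrightarrow(2)$.

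For $(2)\Leftrightarrow(3)$, since $\alpha(X,\$)$ is a topology I would invoke the standard sequential description of the strong Fr\'echet property at a point: $C_{\alpha}(X,\$)$ is strongly Fr\'echet at $X$ precisely when, for every \emph{decreasing} sequence $(\mathcal{P}_{n})_{n<\omega}$ of subsets of $C(X,\$)$ with $X\in\mathrm{cl}_{\alpha(X,\$)}\mathcal{P}_{n}$ for every $n$, there is a selection $P_{n}\in\mathcal{P}_{n}$ with $X\in\lm_{\alpha(X,\$)}(P_{n})_{n}$. It then remains to rewrite the two membership conditions. By Proposition~\ref{pro:adhalpha}, $X\in\mathrm{cl}_{\alpha(X,\$)}\mathcal{P}_{n}$ holds if and only if $\mathcal{P}_{n}$ is an $\alpha$-cover of $X$; as the points of the hyperspace are exactly the open subsets of $X$, these $\mathcal{P}_{n}$ are precisely the open $\alpha$-covers of $(3)$. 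For the limit condition, recall that $\alpha$ is a base of open sets of $\alpha(X,\$)$ and that every nonempty $\mathcal{A}\in\alpha$, being openly isotone, contains $X$; hence the neighborhood filter of $X$ is generated by the nonempty members of $\alpha$, and $(P_{n})_{n}\to X$ means exactly that each such $\mathcal{A}$ contains $P_{n}$ for all but finitely many $n$. Substituting both reformulations turns the sequential description into $(3)$, completing the equivalence.

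The delicate step, and the one I would treat with care, is the sequential description of strong Fr\'echetness itself: one must confirm that $\mathbb{F}_{1}^{\triangle}$-basedness of the topology $\alpha(X,\$)$ at $X$ is faithfully captured by the decreasing-sequence selection property, and in particular that the countably based filter witnessing $\mathbb{F}_{1}^{\triangle}$-basedness can be taken to be the tail filter of an honest sequence $(P_{n})$ with $P_{n}\in\mathcal{P}_{n}$, with the monotonicity of the $\mathcal{P}_{n}$ being exactly what the $\triangle$-steadiness of $\mathbb{F}_{1}^{\triangle}$ supplies (cf.\ \cite{mynard-jordan2}). Everything else---passing from closure to $\alpha$-cover through Proposition~\ref{pro:adhalpha}, and from convergence to cofinite membership through open isotonicity---is routine bookkeeping, and should present no obstacle.
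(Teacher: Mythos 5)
Your proposal is correct and follows essentially the same route as the paper's (very terse) proof: $(1)\Leftrightarrow(2)$ via Corollary~\ref{cor:iffinstances}(1) applied to the $\mathbb{F}_{0}$-composable and $\mathbb{F}_{1}$-steady class $\mathbb{F}_{1}^{\triangle}$, and $(2)\Leftrightarrow(3)$ by unwinding the definition of strong Fr\'echetness through Proposition~\ref{pro:adhalpha} and the fact that the nonempty (openly isotone) members of $\alpha$ form a neighborhood base of $X$ in $\alpha(X,\$)$. Your write-up merely makes explicit the sequential characterization of strong Fr\'echetness that the paper leaves implicit in table~(\ref{table}), so no substantive difference or gap.
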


\begin{proof}
The equivalence between (1) and (2) follows from Corollary \ref%
{cor:iffinstances} (1), and the equivalence between (2) and (3) follows
immediately from the definition of strongly Fr\'{e}chet and Proposition \ref%
{pro:adhalpha}.
\end{proof}

The Fr\'{e}chet property for function spaces can nevertheless be
characterized with our results in the special case of $\alpha =\alpha _{%
\mathcal{D}}$ for a network $\mathcal{D}$.

Following \cite{Gruenhage05}, we call a topological space $X$ \emph{Fr\'{e}%
chet-Urysohn for finite sets at }$x\in X$, or FU$_{fin}$ at $x$, if for any $%
\mathcal{P}\subseteq \lbrack X]^{<\infty }$ such that each $U\in \mathcal{O}%
_{X}(x)$ contains an element of $\mathcal{P}$, there is a sequence $%
(P_{n})_{n\in \omega }\subseteq \mathcal{P}$ such that each $U\in \mathcal{O}%
_{X}(x)$ contains all but finitely many elements of $(P_{n})_{n\in \omega }$%
. We call a filter $\mathcal{F}$ an \emph{FU}$_{fin}$\emph{-filter }if for
any $\mathcal{P}\subseteq \lbrack X]^{<\infty }$ such that $\mathcal{P}\geq 
\mathcal{F}$, there is a sequence $(P_{n})_{n\in \omega }\subseteq \mathcal{P%
}$ such that $(P_{n})_{n\in \omega }\geq \mathcal{F}$. Let $\mathbb{FU}%
_{fin} $ denote the corresponding class of filters. Clearly, a space is FU$%
_{fin}$ at $x$ if it is $\mathbb{FU}_{fin}$-based at $x$.

\begin{theorem}
\label{thm:F=Ffin} Let $\mathcal{D}$ be a network of compact subsets of $X$
and $Y\in C\left( X,\$\right) $. If $C_{\alpha _{\mathcal{D}}}(X,\$)$ is Fr%
\'{e}chet at $Y$ then $C_{\alpha _{\mathcal{D}}}(X,\$)$ is Fr\'{e}%
chet-Urysohn for finite sets at $Y$.
\end{theorem}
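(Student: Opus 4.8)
The plan is to reduce the Fréchet--Urysohn property for finite sets at $Y$ to the ordinary Fréchet property at $Y$ by means of finite intersection, exploiting the shape of a neighborhood base of $Y$ in $\tau :=\alpha _{\mathcal{D}}(X,\$)$. Since $\alpha _{\mathcal{D}}=\mathcal{O}_{X}^{\natural }(\mathcal{D})$ is a subbase of $\tau $, the basic $\tau $-open sets containing $Y$ are the openly isotone sets $\mathcal{O}_{X}(E)=\bigcap _{i}\mathcal{O}_{X}(D_{i})$, where $E=D_{1}\cup \dots \cup D_{k}$ runs over the finite unions of members of $\mathcal{D}$ contained in $Y$. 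The crucial observation is that, for a finite set $\mathbf{S}$ of open subsets of $X$ and such a basic neighborhood,
\begin{equation*}
\mathbf{S}\subseteq \mathcal{O}_{X}(E)\Longleftrightarrow \bigcap _{O\in \mathbf{S}}O\in \mathcal{O}_{X}(E),
\end{equation*}
because ``$O\supseteq E$ for every $O\in \mathbf{S}$'' is the same as ``$\bigcap _{O\in \mathbf{S}}O\supseteq E$''. Thus the finite set $\mathbf{S}$ lies in the basic neighborhood precisely when the open set $\bigcap \mathbf{S}$ does. This is where the finitely generated (at $Y$) shape of $\alpha _{\mathcal{D}}$ is used; the compactness of the members of $\mathcal{D}$ only serves to place us in the setting of a solid hyperconvergence and is what is needed for the companion function-space statement through Corollary \ref{cor:iffinstances}.

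First I would verify the Fréchet hypothesis for an associated set of open sets. Let $\mathcal{P}\subseteq \lbrack C(X,\$)]^{<\infty }$ be a family of finite sets of open subsets of $X$ such that each $\tau $-neighborhood of $Y$ contains a member of $\mathcal{P}$, and put
\begin{equation*}
\mathcal{Q}:=\left\{ \bigcap \mathbf{S}:\mathbf{S}\in \mathcal{P}\right\} \subseteq C(X,\$).
\end{equation*}
Testing on basic neighborhoods suffices to compute the closure: for each admissible $E$, the hypothesis yields $\mathbf{S}\in \mathcal{P}$ with $\mathbf{S}\subseteq \mathcal{O}_{X}(E)$, whence $\bigcap \mathbf{S}\in \mathcal{O}_{X}(E)\cap \mathcal{Q}$. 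Hence every basic neighborhood of $Y$ meets $\mathcal{Q}$, so that $Y\in \mathrm{adh}_{\tau }\mathcal{Q}$ (equivalently, by Proposition \ref{pro:adhalpha}, $\mathcal{Q}$ is an $\alpha _{\mathcal{D}}$-cover of $Y$).

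Next I would invoke the assumed Fréchet property at $Y$: since $Y\in \mathrm{adh}_{\tau }\mathcal{Q}$ there is a sequence $(Q_{n})_{n<\omega }\subseteq \mathcal{Q}$ with $Y\in \lm_{\tau }(Q_{n})_{n}$. Choosing for each $n$ some $\mathbf{S}_{n}\in \mathcal{P}$ with $\bigcap \mathbf{S}_{n}=Q_{n}$, I claim that $(\mathbf{S}_{n})_{n}$ witnesses the Fréchet--Urysohn property for finite sets at $Y$. Indeed, $Y\in \lm_{\tau }(Q_{n})_{n}$ means that every basic neighborhood $\mathcal{O}_{X}(E)$ of $Y$ contains $Q_{n}=\bigcap \mathbf{S}_{n}$ for all but finitely many $n$, and by the crucial equivalence this says exactly that $\mathbf{S}_{n}\subseteq \mathcal{O}_{X}(E)$ for all but finitely many $n$. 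Finally, an arbitrary $\tau $-open neighborhood $U$ of $Y$ contains some basic $\mathcal{O}_{X}(E)$, so $\mathbf{S}_{n}\subseteq \mathcal{O}_{X}(E)\subseteq U$ for all but finitely many $n$, which is the required conclusion.

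The argument is short once the intersection correspondence is in place; the one point that must be handled with care, and which I expect to be the only genuine obstacle, is that the basic neighborhoods $\mathcal{O}_{X}(E)$ are \emph{openly isotone} (upward closed) rather than downward closed. Consequently membership of a finite set in such a neighborhood must be encoded by its \emph{intersection} and not its union, and both the closure condition and the convergence must be read off the finite-union neighborhood \emph{base}, the passage to arbitrary neighborhoods being deferred to the very last step. Getting this direction right is exactly what legitimizes the reduction to the plain Fréchet property.
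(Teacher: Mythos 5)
Your proof is correct and follows essentially the same route as the paper's: both replace each finite set $\mathbf{S}\in \mathcal{P}$ by the open set $\bigcap \mathbf{S}$, use the equivalence $\mathbf{S}\subseteq \mathcal{O}_{X}(E)\Longleftrightarrow \bigcap \mathbf{S}\in \mathcal{O}_{X}(E)$ to see that $Y$ adheres to the family of intersections, invoke the Fr\'{e}chet property to extract a convergent sequence of intersections, and translate back. The only (harmless) difference is that you test against basic neighborhoods $\mathcal{O}_{X}(D_{1}\cup \dots \cup D_{k})$, whereas the paper tests only the subbasic ones $\mathcal{O}_{X}(D)$ with $D\in \mathcal{D}$.
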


\begin{proof}
Let $\beta $ be a family of finite subsets of $C(X,\$)$ such that for each $%
D\in \mathcal{D}$ containing $Y$, there is $\mathcal{P}\in \beta $ with $%
\mathcal{P}\subseteq \mathcal{O}_{X}(D)$. In other words, $D\subseteq
\bigcap_{P\in \mathcal{P}}P$. Since the intersection is finite, $%
\bigcap_{P\in \mathcal{P}}P\in \mathcal{O}_{X}(D)$. Therefore, $Y\in 
\mathrm{cl}_{\alpha _{\mathcal{D}}}\left\{ \bigcap_{P\in \mathcal{P}}P:%
\mathcal{P}\in \beta \right\} $. As $C_{\alpha _{\mathcal{D}}}(X,\$)$ is Fr%
\'{e}chet at $Y$, there is a sequence $(\mathcal{P}_{n})_{n\in \omega }\ $of
elements of $\beta $ such that $Y\in \lm_{\alpha _{\mathcal{D}}}\left(
\bigcap_{P\in \mathcal{P}_{n}}P\right) _{n\in \omega }$. In other words, for
each $Y\subseteq D\in \mathcal{D}$, there is $n_{D}$ such that $%
\bigcap_{P\in \mathcal{P}_{n}}P\in \mathcal{O}_{X}(D)$ for each $n\geq
n_{D}, $ so that $\mathcal{P}_{n}\subseteq \mathcal{O}_{X}(D)$ for each $%
n\geq n_{D} $, which proves that $C_{\alpha _{\mathcal{D}}}(X,\$)$ is FU$%
_{fin}$ at $Y$.
\end{proof}

The method of the proof does not work for general topologies $\alpha \left(
X,\$\right) $ with $\alpha \subseteq \kappa \left( X\right) $, because
compact families do not need to be filters. In particular, there remains the
following problem (of course, for dissonant $X$):

\begin{problem}
Does the Fr\'{e}chet property and the FU$_{fin}$ property coincide for the
Scott topology $C_{\kappa }(X,\$)$?
\end{problem}

It is known (e.g., \cite{ReczSipa99}) that a FU$_{fin}$ topological space is 
$\alpha _{2}$ (in the sense of \cite{ARA} \footnote{%
A topological space $X$ has property $\alpha _{2}$ (at $x$) if for each
sequence $\left( \sigma _{n}\right) _{n\in \omega }$ of sequences converging
to $x$, there is a sequence $\sigma $ convergent to $x$ such that for each $%
n\in \omega $, the set $\sigma _{n}\cap \sigma $ is infinite.}). Therefore
Theorem \ref{thm:F=Ffin} implies that in $C_{\alpha _{\mathcal{D}}}(X,\$)$
the Fr\'{e}chet property implies $\alpha _{2}$, and \emph{a fortiori} $%
\alpha _{3}$ and $\alpha _{4}$, in particular implies the strong Fr\'{e}chet
property.

\begin{lemma}
The class $\mathbb{FU}_{fin}$ is $\mathbb{F}_{0}$-composable and $\mathbb{F}%
_{1}$-steady.
\end{lemma}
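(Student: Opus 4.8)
The plan is to verify the two closure properties of $\mathbb{FU}_{fin}$ directly from the definition of an $\mathbb{FU}_{fin}$-filter, namely that whenever a family $\mathcal{P}$ of finite sets refines a filter $\mathcal{F}\in\mathbb{FU}_{fin}$ (i.e. $\mathcal{P}\geq\mathcal{F}$), one can extract a sequence $(P_n)_{n\in\omega}\subseteq\mathcal{P}$ with $(P_n)_{n}\geq\mathcal{F}$. I would treat composability and steadiness separately.

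For $\mathbb{F}_0$-composability, let $\mathcal{D}\in\mathbb{FU}_{fin}(X)$ and $\mathcal{R}\in\mathbb{F}_0(X\times Y)$ be a principal filter, generated by a single relation $R\subseteq X\times Y$, so that $\mathcal{RD}$ is the filter on $Y$ generated by $\{RD:D\in\mathcal{D}\}$. I want to show $\mathcal{RD}\in\mathbb{FU}_{fin}(Y)$. So suppose $\mathcal{Q}\subseteq[Y]^{<\infty}$ refines $\mathcal{RD}$; I must produce a sequence in $\mathcal{Q}$ that still refines $\mathcal{RD}$. The key move is to pull $\mathcal{Q}$ back through $R$: for each finite $Q\in\mathcal{Q}$ choose, for each $y\in Q$, a point $x\in X$ with $(x,y)\in R$ whenever one exists, obtaining a finite set $P_Q\subseteq X$ with $RP_Q\supseteq Q$ (on the relevant part). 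The resulting family $\mathcal{P}:=\{P_Q:Q\in\mathcal{Q}\}\subseteq[X]^{<\infty}$ should refine $\mathcal{D}$, using that $\mathcal{Q}\geq\mathcal{RD}$ and the definition of $\mathcal{RD}$; then the $\mathbb{FU}_{fin}$ property of $\mathcal{D}$ yields a sequence $(P_{Q_n})_n\geq\mathcal{D}$, and pushing back forward through $R$ gives $(RP_{Q_n})_n\geq\mathcal{RD}$, whence $(Q_n)_n\geq\mathcal{RD}$ since $Q_n\subseteq RP_{Q_n}$ (up to the part that meets $RD$). This is the step I expect to be the main obstacle: managing the forward/backward passage through the relation so that finiteness is preserved and the refinement direction is not lost, since $R$ need not be a function and the correspondence between $Q$ and $P_Q$ must be bookkept carefully to guarantee $(Q_n)_n\geq\mathcal{RD}$ rather than merely $(RP_{Q_n})_n\geq\mathcal{RD}$.

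For $\mathbb{F}_1$-steadiness, let $\mathcal{D}\in\mathbb{FU}_{fin}$ and let $\mathcal{J}\in\mathbb{F}_1$ be countably based and mesh with $\mathcal{D}$; I must show $\mathcal{D}\vee\mathcal{J}\in\mathbb{FU}_{fin}$. Fix a decreasing base $(J_k)_{k\in\omega}$ of $\mathcal{J}$. Given $\mathcal{P}\subseteq[X]^{<\infty}$ with $\mathcal{P}\geq\mathcal{D}\vee\mathcal{J}$, I would for each $k$ consider the trace family $\mathcal{P}_k:=\{P\cap J_k:P\in\mathcal{P}\}$, which consists of finite sets and refines $\mathcal{D}$ (since every $D\cap J_k\in\mathcal{D}\vee\mathcal{J}$ contains some $P$, and $P\subseteq J_k$ forces $P=P\cap J_k$). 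Applying the $\mathbb{FU}_{fin}$ property of $\mathcal{D}$ at each level $k$ gives, for every $k$, a sequence in $\mathcal{P}_k$ refining $\mathcal{D}$; the standard steadiness argument then diagonalizes over $k$, selecting finitely many $P_{k,i}$ at stage $k$ whose union lies in $J_k$ and in a prescribed $D$-set, and interleaving these into a single sequence that refines $\mathcal{D}\vee\mathcal{J}$. The countability of the base $(J_k)_k$ is exactly what makes this diagonalization possible, which is why the hypothesis is $\mathbb{F}_1$-steadiness rather than steadiness against larger filters.

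I would present the two parts as separate short paragraphs in the proof, each opening by unpacking the defining condition of $\mathbb{FU}_{fin}$-filter and closing with the extracted sequence. Throughout I would lean on the observation (implicit in the definition) that $\mathcal{P}\geq\mathcal{F}$ means every member of $\mathcal{F}$ contains some member of $\mathcal{P}$, so refinement statements reduce to elementary set inclusions; no convergence-structure machinery is needed beyond this. The only genuinely delicate point remains the composability argument's handling of the relation $R$, and I would make sure to state explicitly that finite sets are carried to finite sets in both directions so that the produced sequences stay within $[X]^{<\infty}$ and $[Y]^{<\infty}$ respectively.
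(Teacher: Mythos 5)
Your $\mathbb{F}_{0}$-composability argument is in essence the paper's (pull the finite sets back through the relation, apply the FU$_{fin}$ property upstairs, push the chosen sequence forward), but your bookkeeping is oriented the wrong way and, as written, fails at the point you yourself flag: you pull back each $Q\in\mathcal{Q}$ by choosing, for every $y\in Q$, a preimage ``whenever one exists,'' with no reference to any member of $\mathcal{D}$. Then nothing forces $P_{Q}\subseteq D$ for the $D$ witnessing $Q\subseteq RD$ (the chosen preimage of $y$ may lie outside $D$ even though $y$ has one inside $D$), so $\{P_{Q}\}$ need not refine $\mathcal{D}$; and points of $Q$ with no preimage at all break $Q\subseteq RP_{Q}$, whence the hedges ``on the relevant part.'' The paper avoids this by indexing the construction by the filter rather than by the family: for each $F\in\mathcal{F}$ pick $P_{F}\in\mathcal{P}$ with $P_{F}\subseteq AF$, so that every $y\in P_{F}$ has a preimage $x_{y}\in F$ by construction; then $Q_{F}:=\{x_{y}:y\in P_{F}\}\subseteq F$ gives $\mathcal{Q}:=\{Q_{F}:F\in\mathcal{F}\}\geq\mathcal{F}$ automatically, FU$_{fin}$ yields $(F_{n})_{n}$ with $(Q_{F_{n}})_{n}\geq\mathcal{F}$, and $P_{F_{n}}\subseteq AQ_{F_{n}}$ transfers this to $(P_{F_{n}})_{n}\geq A\mathcal{F}$. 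So this half is correct in approach and repairable in one line.

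The $\mathbb{F}_{1}$-steadiness half, however, has a genuine gap: the ``standard steadiness argument then diagonalizes over $k$'' is precisely the step that is not standard. (Minor first: your $\mathcal{P}_{k}=\{P\cap J_{k}:P\in\mathcal{P}\}$ is not a subfamily of $\mathcal{P}$, so sequences extracted from it need not consist of members of $\mathcal{P}$; you should use $\{P\in\mathcal{P}:P\subseteq J_{k}\}$, which does refine $\mathcal{D}$.) The substantive problem is this: after applying the FU$_{fin}$ property of $\mathcal{D}$ level by level you hold countably many sequences, each refining $\mathcal{D}$, and you must interleave them into a single sequence refining $\mathcal{D}\vee\mathcal{J}$. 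For a fixed $D\in\mathcal{D}$ the threshold past which the $k$-th sequence enters $D$ depends on $k$, so a transversal selection taking later and later levels need not eventually enter $D$. This is exactly an $\alpha_{2}$-type selection problem, and the identical scheme would ``prove'' that the class $\mathbb{F}_{0}^{\triangle}$ of Fr\'{e}chet filters is $\mathbb{F}_{1}$-steady --- which the paper explicitly states is false. The diagonalization is legitimate for $\mathbb{FU}_{fin}$ only because FU$_{fin}$ filters enjoy an $\alpha_{2}$-type diagonal property, a nontrivial fact your sketch neither proves nor invokes. The paper deliberately sidesteps it: it proves directly only the easy $\mathbb{F}_{0}$-steadiness (if $\mathcal{P}\geq A\vee\mathcal{F}$, replace $\mathcal{P}$ by its subfamily of members contained in $A$) and then obtains $\mathbb{F}_{1}$-steadiness by citing $\mathbb{FU}_{fin}\times\mathbb{F}_{1}\subseteq\mathbb{FU}_{fin}$ from \cite{ReczSipa99} or \cite[Theorem 20]{Gruenhage05}, converted into steadiness via \cite[Theorem 20(1)]{mynard-jordan2}. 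To complete your route you would have to prove that diagonal lemma for FU$_{fin}$ filters from scratch; as written, the steadiness half does not go through.
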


\begin{proof}
Let $\mathcal{F}\in \mathbb{FU}_{fin}(X),$ $A\subseteq X\times Y$ and let $%
\mathcal{P}\subseteq \lbrack Y]^{<\infty }$ such that $\mathcal{P}\geq A%
\mathcal{F}$. In other words, for each $F\in \mathcal{F}$ there is $P_{F}\in 
\mathcal{P}$ such that $P_{F}\subseteq AF$. Hence for each $y\in P_{F}$
there is $x_{y}\in F$ such that $(x_{y},y)\in A$. Let $Q_{F}:=\{x_{y}:y\in
P_{F}\}$ and let $\mathcal{Q}:=\{Q_{F}:F\in \mathcal{F}\}$. Then $\mathcal{Q}%
\subseteq \lbrack X]^{<\infty }$ such that $\mathcal{Q}\geq \mathcal{F}$.
Therefore there is a sequence $(F_{n})_{n\in \omega }\subseteq \mathcal{F}$
such that $(Q_{F_{n}})_{n\in \omega }\geq \mathcal{F}$. It is easy to see
that $(P_{F_{n}})_{n\in \omega }\geq A\mathcal{F}$, which shows that $%
\mathbb{FU}_{fin}$ is $\mathbb{F}_{0}$-composable.

The class $\mathbb{FU}_{fin}$ is $\mathbb{F}_{0}$-steady because if $%
\mathcal{P}\geq A\vee \mathcal{F}$ there is $\mathcal{P}_{0}\subseteq 
\mathcal{P}$ such that $\mathcal{P}_{0}\geq A\vee \mathcal{F}$ and $\mathcal{%
P}_{0}\subseteq \lbrack A]^{<\infty }$. Moreover, by \cite{ReczSipa99} or 
\cite[Theorem 20]{Gruenhage05}, $\mathbb{FU}_{fin}\times \mathbb{F}%
_{1}\subseteq \mathbb{FU}_{fin}$ (in terms of of \cite{mynard-jordan2}),
hence \cite[Theorem 20(1)]{mynard-jordan2}, $\mathbb{FU}_{fin}$ is therefore
also $\mathbb{F}_{1}$-steady.
\end{proof}

\begin{theorem}
\label{th:FrechetCalpha} Let $\mathcal{D}$ be a network of compact subsets
of $X$. The following are equivalent:

\begin{enumerate}
\item $C_{\alpha _{\mathcal{D}}}(X,\$)$ is Fr\'{e}chet at $X$;

\item $C_{\alpha _{\mathcal{D}}}(X,\$)$ is FU$_{fin}$ at $X$;

\item $C_{\alpha _{\mathcal{D}}}(X,\mathbb{R})$ is FU$_{fin}$;

\item $C_{\alpha _{\mathcal{D}}}(X,\mathbb{R})$ is Fr\'{e}chet;

\item For every open $\mathcal{D}$-cover $\mathcal{C}$ of $X$, there exists
a countable subfamily $\mathcal{S}$ of $\mathcal{C}$ such that every $D\in 
\mathcal{D}$ is contained in all but finitely many elements of $\mathcal{S}$.
\end{enumerate}
\end{theorem}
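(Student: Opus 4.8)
The plan is to run the cycle $(1)\Rightarrow(2)\Rightarrow(3)\Rightarrow(4)\Rightarrow(1)$ and to treat the combinatorial condition $(5)$ as equivalent to $(2)$. The implication $(1)\Rightarrow(2)$ is exactly Theorem~\ref{thm:F=Ffin} applied at the whole space $Y=X$, and $(3)\Rightarrow(4)$ is the elementary general fact that FU$_{fin}$ implies the Fr\'echet property: if $f\in\mathrm{cl}\,A$, apply the FU$_{fin}$ condition to the family of singletons $\{\{g\}:g\in A\}$ to extract a sequence in $A$ converging to $f$. For $(2)\Leftrightarrow(3)$ I would invoke the symmetric transfer. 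Since $\alpha_{\mathcal{D}}\subseteq\kappa(X)$ and, by the preceding lemma, $\mathbb{FU}_{fin}$ is $\mathbb{F}_{0}$-composable and $\mathbb{F}_{1}$-steady, Corollary~\ref{cor:iffinstances}(1) gives that $C_{\alpha_{\mathcal{D}}}(X,\mathbb{R})$ is $\mathbb{FU}_{fin}$-based at $\overline{0}$ if and only if $C_{\alpha_{\mathcal{D}}}(X,\$)$ is $\mathbb{FU}_{fin}$-based at $X$. For a topological space, being $\mathbb{FU}_{fin}$-based at a point $x$ amounts to the neighborhood filter $\mathcal{N}(x)$ being an $\mathbb{FU}_{fin}$-filter, which is precisely the FU$_{fin}$ property at $x$; and since $C_{\alpha_{\mathcal{D}}}(X,\mathbb{R})$ is a topological group, FU$_{fin}$ at $\overline{0}$ is equivalent to FU$_{fin}$ at every point. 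This yields $(2)\Leftrightarrow(3)$.

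The crucial step is the return $(4)\Rightarrow(1)$, where the symmetric transfer is unavailable because the Fr\'echet class $\mathbb{F}_{0}^{\triangle}$ fails to be $\mathbb{F}_{1}$-steady. Instead I would use the one-directional Corollary~\ref{cor:uptodown}(1), which requires only $\mathbb{F}_{0}$-composability of the class together with functional separation and solidity of the hyperconvergence. Take $\mathbb{B}=\mathbb{F}_{0}^{\triangle}$ and $\tau=\alpha_{\mathcal{D}}(X,\$)$; the latter is a functionally separated solid hyperconvergence because $\alpha_{\mathcal{D}}\subseteq\kappa(X)$, and $\tau^{\Uparrow}=\alpha_{\mathcal{D}}(X,\mathbb{R})$. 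From $(4)$, $C_{\alpha_{\mathcal{D}}}(X,\mathbb{R})$ is Fr\'echet, hence $\mathbb{F}_{0}^{\triangle}$-based at $\overline{0}$, so Corollary~\ref{cor:uptodown}(1) forces $C_{\alpha_{\mathcal{D}}}(X,\$)$ to be $\mathbb{F}_{0}^{\triangle}$-based at $X$, that is, Fr\'echet at $X$, which is $(1)$. This closes the cycle.

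For $(2)\Leftrightarrow(5)$ I would decode the FU$_{fin}$ property at $X$ combinatorially. The open sets of $C_{\alpha_{\mathcal{D}}}(X,\$)$ are exactly the members of $\alpha_{\mathcal{D}}$, so the basic neighborhoods of the point $X$ are the families $\mathcal{O}_{X}(D)$ with $D\in\mathcal{D}$, and a finite set $P$ of open subsets of $X$ satisfies $P\subseteq\mathcal{O}_{X}(D)$ if and only if $D\subseteq\bigcap P$; this identification of neighborhoods of $X$ with $\alpha$-covers is furnished by Proposition~\ref{pro:adhalpha}. Translating the FU$_{fin}$ condition through this dictionary gives precisely $(5)$: given an open $\mathcal{D}$-cover $\mathcal{C}$, one applies FU$_{fin}$ to $\{\{C\}:C\in\mathcal{C}\}$ — whose hypothesis holds exactly because $\mathcal{C}$ is a $\mathcal{D}$-cover — to obtain the desired countable $\mathcal{S}$; conversely, from any witnessing family $\mathcal{P}$ of finite sets of opens one forms the open $\mathcal{D}$-cover $\{\bigcap P:P\in\mathcal{P}\}$ and pulls back the conclusion of $(5)$.

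The main obstacle, and the reason the argument must be routed this way, is the asymmetry of the Fr\'echet class: it is $\mathbb{F}_{0}$-composable but not $\mathbb{F}_{1}$-steady, so it cannot be transferred symmetrically between the hyperspace and the function space. The whole scheme hinges on detouring through $\mathbb{FU}_{fin}$ (which is steady) to climb from Fr\'echet up to FU$_{fin}$ on the hyperspace, and on the one-sided Corollary~\ref{cor:uptodown}(1) to descend from Fr\'echet on the function space back to the hyperspace. Both ends rely on the compactness of the members of $\mathcal{D}$: it is what makes each $\mathcal{O}_{X}(D)$ a genuine filter (so that Theorem~\ref{thm:F=Ffin} is applicable) and what places $\alpha_{\mathcal{D}}$ inside $\kappa(X)$ (so that $\alpha_{\mathcal{D}}(X,\$)$ is a functionally separated solid hyperconvergence).
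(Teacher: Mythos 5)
Your proof is correct, and its skeleton — the cycle through FU$_{fin}$ via Theorem~\ref{thm:F=Ffin} and the transfer Corollary~\ref{cor:iffinstances}(1), with a one-sided descent for $(4)\Rightarrow(1)$ — is the paper's own strategy. The one genuine divergence is in that crucial return step. The paper deduces $(4)\Rightarrow(1)$ from Corollary~\ref{cor:uptodown}\emph{(2)}, using that the class of Fr\'echet filters is stable under infima, $\mathbb{F}_{0}^{\triangle}=(\mathbb{F}_{0}^{\triangle})^{\bigwedge}$: part (2) concludes that $P\tau$ is $\mathbb{B}^{\bigwedge}$-based at $X$, and since $\alpha_{\mathcal{D}}(X,\$)$ is a topology, $P\tau=\tau$, so no separation hypothesis whatsoever is needed for this implication. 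You instead invoke Corollary~\ref{cor:uptodown}\emph{(1)}, which requires $\alpha_{\mathcal{D}}(X,\$)$ to be a functionally separated solid hyperconvergence; that holds because $\alpha_{\mathcal{D}}\subseteq\kappa(X)$ and compact families on a \emph{completely regular} space are functionally separated (via \cite[Lemma 2.5]{DM_Isbell}). So you trade the infima-closedness of $\mathbb{F}_{0}^{\triangle}$ (a fact you never use) for a separation hypothesis on $X$. Within the paper's standing framework this costs nothing — the $(2)\Leftrightarrow(3)$ transfer already rests on functional separation of $\alpha(X,\$)$ for $\alpha\subseteq\kappa(X)$, hence on complete regularity — but the paper's route makes the single implication $(4)\Rightarrow(1)$ valid without it, which is a small gain in generality worth knowing about. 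Your remaining deviations are cosmetic: the paper reads $(1)\Leftrightarrow(5)$ straight off the definitions, whereas you decode $(2)\Leftrightarrow(5)$, and your $(5)\Rightarrow(2)$ direction quietly re-runs the finite-intersection trick ($P\subseteq\mathcal{O}_{X}(D)$ iff $D\subseteq\bigcap P$, with $\bigcap P$ open and in $\mathcal{O}_{X}(D)$ by finiteness) that is the heart of the proof of Theorem~\ref{thm:F=Ffin}; likewise you spell out $(3)\Rightarrow(4)$ via singletons where the paper calls it obvious. All of these checks are sound, including your identification, for topological spaces, of being $\mathbb{FU}_{fin}$-based at a point with $\mathcal{N}(x)\in\mathbb{FU}_{fin}$, and your use of the group structure of $C_{\alpha_{\mathcal{D}}}(X,\mathbb{R})$ to pass from the point $\overline{0}$ to all points.
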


\begin{proof}
$(1)\Longleftrightarrow (2)$ follows from Theorem \ref{thm:F=Ffin}. $%
(1)\Longleftrightarrow (5)$ follows immediately from the definitions. $%
(2)\Longleftrightarrow (3)$ follows from Corollary \ref{cor:iffinstances}
(1), because the class of FU$_{fin}$ filters is $\mathbb{F}_{0}$-composable
and $\mathbb{F}_{1}$-steady and $C_{\alpha _{\mathcal{D}}}(X,\mathbb{R})$ is
a topological group. $(3)\Longrightarrow (4)$ and $(2)\Longrightarrow (1)$
are obvious, and $(4)\Longrightarrow (1)$ follows from Corollary \ref%
{cor:uptodown} (2), because $\mathbb{F}_{0}^{\triangle }=(\mathbb{F}%
_{0}^{\triangle })^{^{\bigwedge }}$.
\end{proof}

Note that the equivalence $(4)\Longleftrightarrow (5)$ is \cite[Theorem 4.7.4%
]{McCoy}. In the case $\alpha _{\mathcal{D}}=p(X),$ the equivalences $%
(3)\Longleftrightarrow (4)\Longleftrightarrow (5)$ are due to \cite%
{gerlits.Cp}.

The case $\alpha _{\mathcal{D}}=p(X)$ generalizes \cite[Proposition 5 (1)]%
{convhyper} stating that $cC_{p}(X,\$)$ is $\alpha _{2}$ whenever it is Fr%
\'{e}chet. On the other hand, when $\alpha _{\mathcal{D}}=k(X)$, \cite[%
Proposition 5 (2)]{convhyper} is generalized in two ways: we only need to
assume that $cC_{k}(X,\$)$ is Fr\'{e}chet (rather than the more stringent
condition of strict Fr\'{e}chetness) and we obtain that $cC_{k}(X,\$)$ is FU$%
_{fin}$ rather than $\alpha _{2}$.

Note however that while the Fr\'{e}chet property is equivalent to
sequentiality and even to being a $k$-space for $C_{p}(X,\mathbb{R})$ and $%
C_{k}(X,\mathbb{R})$ (e.g., \cite{Pytkeev}), these properties are not
equivalent for the corresponding hyperspaces. For instance, an example of a
space $X$ for which $C_{k}(X,\$)$ is sequential but not Fr\'{e}chet is given
in \cite[p. 275]{CHV}. Therefore, the results of Section \ref%
{sec:transferclasses} in general do not apply to sequentiality.

\section{Appendix: dual convergences}

We have seen that each non-degenerate $\alpha \subseteq C(X,\$)$ composed of
openly isotone families defines a $Z$\emph{-dual topology} $\alpha (X,Z)$ on 
$C(X,Z)$ via (\ref{funct:subbase}). Note that $f\in \lm_{\alpha
(X,Z)}\mathcal{F}$ if and only if%
\begin{equation}
\forall _{O\in \mathcal{O}_{Z}}\forall _{\mathcal{A}\in \alpha }\;f\in
\lbrack \mathcal{A},O]\Longrightarrow \lbrack \mathcal{A},O]\in \mathcal{F}.
\label{dual-top}
\end{equation}

In view of the characterization $\left( \ref{nat}\right) $ of the natural
convergence, it is natural to consider for each collection $\alpha $ of
(openly isotone) families on $X$ the $Z$\emph{-dual convergence }$[\alpha
,Z] $ defined by: $f\in \lm_{\lbrack \alpha ,Z]}\mathcal{F}$ if
and only if%
\begin{equation}
\forall _{O\in \mathcal{O}_{Z}}\forall _{\mathcal{A}\in \alpha }\;f\in
\lbrack \mathcal{A},O]\Longrightarrow \exists _{A\in \mathcal{A}}\;[A,O]\in 
\mathcal{F}.  \label{dual-conv}
\end{equation}

Distinct collections $\alpha $ of families of open sets generate distinct
topologies on $C(X,Z)$ provided that the elements of $C(X,Z)$ separate these
families in $X$. Such a separation is assured for example by the $Z$%
-regularity of $X$ and the compactness of the elements of $\alpha $ (see 
\cite[Proposition 2.1]{groupIsbell}). In contrast, all the collections $%
\alpha $ including $p\left( X\right) $ and included in $\kappa \left(
X\right) $ give rise the same convergence, which turns out to be the \emph{%
natural convergence}.

\begin{theorem}
\label{th:equality} The dual convergence $[\alpha ,Z]$ is equal to the
natural convergence $[X,Z]$ for each collection $\alpha $ such that $%
p(X)\leq \alpha \leq \kappa (X)$.
\end{theorem}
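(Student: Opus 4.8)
The plan is to prove the equality $[\alpha ,Z]=[X,Z]$ by establishing the two comparisons separately, each using exactly one half of the hypothesis $p(X)\leq \alpha \leq \kappa (X)$. Since two convergences coincide precisely when they have the same limits, I would fix an arbitrary filter $\mathcal{F}$ on $C(X,Z)$ and a function $f$, and move freely between the defining condition (\ref{dual-conv}) of $[\alpha ,Z]$ and the two pointwise descriptions (\ref{nat}) and (\ref{eq:natcap}) of the natural convergence. The first comparison, $[X,Z]\leq [\alpha ,Z]$, uses only the lower bound $p(X)\subseteq \alpha$; the second, $[\alpha ,Z]\leq [X,Z]$, uses only compactness of the families in $\alpha$, that is, $\alpha \subseteq \kappa (X)$.

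For $[X,Z]\leq [\alpha ,Z]$, suppose $f\in \lm_{[\alpha ,Z]}\mathcal{F}$ and verify (\ref{nat}). Given an open $U\subseteq Z$ and $x\in X$ with $f\in [x,U]$, note that the singleton family $\mathcal{O}_X(x)$ is finitely generated, hence lies in $p(X)\subseteq \alpha$. Since $f\in [\mathcal{O}_X(x),U]$ is equivalent to $f^{-}(U)\in \mathcal{O}_X(x)$, that is, to $x\in f^{-}(U)$, the defining condition (\ref{dual-conv}) applied to $\mathcal{A}=\mathcal{O}_X(x)$ yields $A\in \mathcal{O}_X(x)$ with $[A,U]\in \mathcal{F}$; taking $V:=A\in \mathcal{O}_X(x)$ gives exactly (\ref{nat}). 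This direction is routine.

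The substantial direction is $[\alpha ,Z]\leq [X,Z]$, where compactness enters. Assume $f\in \lm_{[X,Z]}\mathcal{F}$, fix an open $O\subseteq Z$ and a family $\mathcal{A}\in \alpha$ with $f\in [\mathcal{A},O]$, i.e. $f^{-}(O)\in \mathcal{A}$; I must produce $A\in \mathcal{A}$ with $[A,O]\in \mathcal{F}$. For each $x\in f^{-}(O)$, characterization (\ref{eq:natcap}) supplies $F_{x}\in \mathcal{F}$ with $W_{x}:=\bigcap_{g\in F_{x}}g^{-}(O)\in \mathcal{O}_{X}(x)$; in particular $W_{x}$ is open, contains $x$, and $F_{x}\subseteq [W_{x},O]$, so $[W_{x},O]\in \mathcal{F}$. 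The open sets $\{W_{x}:x\in f^{-}(O)\}$ cover $f^{-}(O)$, so their union is an open set containing $f^{-}(O)\in \mathcal{A}$; since $\mathcal{A}$ is openly isotone, this union belongs to $\mathcal{A}$. Compactness of $\mathcal{A}$ (which holds because $\alpha \subseteq \kappa (X)$) then extracts finitely many points $x_{1},\dots ,x_{n}$ with $A:=W_{x_{1}}\cup \dots \cup W_{x_{n}}\in \mathcal{A}$. Finally $[A,O]=\bigcap_{i=1}^{n}[W_{x_{i}},O]$ is a finite intersection of members of the filter $\mathcal{F}$, hence $[A,O]\in \mathcal{F}$, which is what (\ref{dual-conv}) demands.

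The main obstacle is this compactness step, and specifically the two identifications that make it run: that the sets $W_{x}$ are genuinely open (so $\{W_{x}\}$ is an admissible cover in the sense of the compact-family definition) and that passing from the cover to its union lands inside $\mathcal{A}$ via open isotonicity, so that compactness is applicable to yield a finite subfamily whose \emph{union} is still in $\mathcal{A}$. Everything else reduces to the filter axioms and the elementary identity $[A\cup B,O]=[A,O]\cap [B,O]$. Combining the two comparisons gives $[\alpha ,Z]=[X,Z]$.
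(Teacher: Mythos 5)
Your proof is correct and follows essentially the same route as the paper's: the paper establishes $[X,Z]\geq [\kappa (X),Z]$ and $[p(X),Z]\geq [X,Z]$ and sandwiches $[\alpha ,Z]$ by monotonicity in $\alpha $, while you run the same two arguments directly for $\alpha $ (using $\mathcal{O}_{X}(x)\in p(X)\subseteq \alpha $ for one inequality and compactness of each $\mathcal{A}\in \alpha $, with the identity $[A\cup B,O]=[A,O]\cap \lbrack B,O]$, for the other). The only cosmetic difference is in the compactness step, where the paper chooses $V_{x}\subseteq f_{0}^{-}(O)$ so that the cover's union is exactly $f_{0}^{-}(O)\in \mathcal{A}$, whereas you allow the possibly larger sets $W_{x}$ and invoke open isotonicity of $\mathcal{A}$ before extracting the finite subfamily.
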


\begin{proof}
We first show that $[X,Z]\geq \lbrack \kappa (X),Z].$ To this end, assume
that $f_{0}\in \lm_{\lbrack X,Z]}\mathcal{F}$ and let $f_{0}\in \lbrack 
\mathcal{A},O]$ where $O$ is $Z$-open and $\mathcal{A}\in \kappa (X)$. It
follows that $f_{0}^{-}\left( O\right) \in \mathcal{A}$. If $x\in
f_{0}^{-}\left( O\right) $ then there is $V_{x}\in \mathcal{O}(x)$ such that 
$V_{x}\subseteq f_{0}^{-}\left( O\right) $ and $\left[ V_{x},O\right] \in 
\mathcal{F}$. By the compactness of $\mathcal{A}$, there is a finite subset $%
B$ of $f_{0}^{-}\left( O\right) $ such that $V:=\bigcup_{x\in
B}V_{x}\in \mathcal{A}$. On the other hand, $\left[ V,O\right]
=\bigcap_{x\in B}\left[ V_{x},O\right] \in \mathcal{F}$ showing
that $f_{0}\in \lm_{\lbrack \kappa \left( X\right) ,Z]}\mathcal{F}$.

As $[\kappa (X),Z]\geq \lbrack p(X),Z]$, it is now enough to show that $%
[p(X),Z]\geq \lbrack X,Z]$. Suppose that $f_{0}\in \lm_{\lbrack
p\left( X\right) ,Z]}\mathcal{F}$ and let $x\in X,O\in \mathcal{O}_{Z}$ be
such that $f_{0}\in \lbrack x,O]$, equivalently $f_{0}^{-}\left( O\right)
\in \mathcal{O}_{X}\left( x\right) $, or else, $f_{0}\in \left[ \mathcal{O}%
_{X}\left( x\right) ,O\right] $. By the assumption, there is $V\in \mathcal{O%
}_{X}\left( x\right) $ such that $[V,O]\in \mathcal{F}$, that is, $f_{0}\in
\lm_{\lbrack X,Z]}\mathcal{F}$.
\end{proof}

Note that, since $[A,O]\subseteq \lbrack \mathcal{A},O]$ for each $A\in 
\mathcal{A}$,%
\begin{equation}
\lbrack \alpha ,Z]\geq T[\alpha ,Z]\geq \alpha (X,Z).  \label{eq:3alphas}
\end{equation}

\bibliographystyle{amsplain}

\begin{thebibliography}{10}

\bibitem{arensdug}
R.~Arens and J.~Dugundji, \emph{Topologies for function spaces}, Pacific J.
  Math. \textbf{1} (1951), 5--31.

\bibitem{ARA}
A.~V. Arhangel'skii, \emph{The frequency spectrum of a topological space and
  the classification of spaces}, Math. Dokl. \textbf{{\bf 13}} (1972),
  1185--1189.

\bibitem{Arh.function}
\bysame, \emph{Topological function spaces}, Kluwer Academic, Dordrecht, 1992.

\bibitem{arh.hurewicz}
A.V. Arhangel'skii, \emph{Hurewicz spaces, analytic sets and fan tightness of
  function spaces}, Soviet Math. Dokl. \textbf{33} (1986), 396--399.

\bibitem{BallHager.C(X)}
R.N. Ball and A.W. Hager, \emph{Network character and tightness of the
  compact-open topology}, Comment. Math. Univ. Carolin. \textbf{47} (2006),
  no.~3, 473--482.

\bibitem{BB.book}
R.~Beattie and H.~P. Butzmann, \emph{Convergence {S}tructures and
  {A}pplications to {F}unctional {A}nalysis}, Kluwer Academic, 2002.

\bibitem{Binz}
E.~Binz, \emph{Continuous convergence in {$C(X)$}}, Springer-Verlag, 1975,
  Lect. Notes Math. 469.

\bibitem{CHV}
L.~Hol{\`a} C.~Costantini and P.~Vitolo, \emph{Tightness, character and related
  properties of hyperspace topologies}, Top. Appl. \textbf{142} (2004),
  245--292, to appear.

\bibitem{cho}
G.~Choquet, \emph{Convergences}, Ann. Univ. Grenoble \textbf{{\bf 23}}
  (1947-48), 55--112.

\bibitem{convhyper}
Giuseppe Di~Maio, Lj. D.~R. Ko{\v{c}}inac, and Tsugunori Nogura,
  \emph{Convergence properties of hyperspaces}, J. Korean Math. Soc.
  \textbf{44} (2007), no.~4, 845--854. \MR{MR2334529 (2008k:54008)}

\bibitem{GIP.dual}
Georgiou D.N., Illiadis S.D., and Papadopoulos B.K., \emph{On dual topologies},
  Top. Appl. \textbf{140} (2004), no.~1, 57--68.

\bibitem{D.pannonica}
S.~Dolecki, \emph{Properties transfer between topologies on function spaces,
  hyperspaces and underlying spaces}, Mathematica Pannonica \textbf{19} (2008),
  no.~2, 243--262.

\bibitem{dolecki.BT}
\bysame, \emph{An initiation into convergence theory}, Contemporary Mathematics
  486, vol. Beyond Topology, pp.~115--161, A.M.S., 2009.

\bibitem{DGL.kur}
S.~Dolecki, G.~H. Greco, and A.~Lechicki, \emph{When do the upper {K}uratowski
  topology (homeomorphically, {S}cott topology) and the cocompact topology
  coincide?}, Trans. Amer. Math. Soc. \textbf{{\bf 347}} (1995), 2869--2884.

\bibitem{groupIsbell}
S.~Dolecki, F.~Jordan, and F.~Mynard, \emph{Group topologies coarser than the
  {I}sbell topology}, to appear.

\bibitem{DM_Isbell}
S.~Dolecki and F.~Mynard, \emph{When is the {I}sbell topology a group
  topology?}, to appear in Top. Appl.

\bibitem{DM.uK}
\bysame, \emph{Hyperconvergences.}, Appl. Gen. Top. \textbf{4} (2003), no.~2,
  391--419.

\bibitem{EscardoLawson}
Mart{\'{\i}}n Escard{\'o}, Jimmie Lawson, and Alex Simpson, \emph{Comparing
  {C}artesian closed categories of (core) compactly generated spaces}, Topology
  Appl. \textbf{143} (2004), no.~1-3, 105--145. \MR{MR2080286 (2005f:54047)}

\bibitem{feldman}
W.~A. Feldman, \emph{Axioms of countability and the algebra {$C(X)$}}, Pacific
  J. Math. \textbf{{\bf 47}} (1973), 81--89.

\bibitem{gerlits.Cp}
J.~Gerlits and Z.~Nagy, \emph{Some properties of {$C(X)$},}, Top. Appl.
  \textbf{14} (1982), 151--161.

\bibitem{compedium}
G.~Gierz, K.~H. Hofmann, K.~Keimel, J.~Lawson, M.~Mislove, and D.~Scott,
  \emph{A compedium of continuous lattices}, Springer-Verlag, Berlin, 1980.

\bibitem{contlattices}
G.~Gierz, K.H. Hofmann, K.~Keimel, J.~Lawson, M.~Mislove, and D.~Scott,
  \emph{Continuous lattices and domains}, Encyclopedia of Mathematics, vol.~93,
  Cambridge University Press, 2003.

\bibitem{greco.mesurabilite}
G.~Greco, \emph{Sur la mesurabilit{\'e} d'une fonction num{\'e}rique par
  rapport {\`a} une famille d'ensembles}, Rend. Semin. Mat. Univ. Podova
  \textbf{65} (1981), 163--176.

\bibitem{greco.decomp}
\bysame, \emph{Decomposizioni di semifiltri e $\gamma$-limiti sequenziali in
  reticoli completamente distributivi}, Ann. Mat. pura e appl. \textbf{137}
  (1984), 61--82.

\bibitem{greco.minimax}
\bysame, \emph{Minimax theorems and saddling transformations}, J. Math. An.
  Appl. \textbf{147} (1990), 180--197.

\bibitem{Gruenhage.prodfrechet}
G.~Gruenhage, \emph{Products of {F}r{\'e}chet spaces}, Top. Proc. \textbf{30}
  (2006), no.~2, 475--499.

\bibitem{Gruenhage05}
G.~Gruenhage and P.~Szeptycki, \emph{Fr\'echet-{U}rysohn for finite sets},
  Topology Appl. \textbf{151} (2005), no.~1-3, 238--259. \MR{MR2139755
  (2006i:54004)}

\bibitem{Isbell75function}
J.~Isbell, \emph{Function spaces and adjoints}, Math. Scand. \textbf{36}
  (1975), 317--339.

\bibitem{francis.Cp}
F.~Jordan, \emph{Productive local properties of function spaces}, Top. Appl.
  \textbf{154} (2007), no.~4, 870--883.

\bibitem{FJ_coincidence}
\bysame, \emph{Coincidence of function space topologies}, Top. Appl.
  \textbf{157} (2010), no.~2, 336--351.

\bibitem{mynard-jordan2}
F.~Jordan and F.~Mynard, \emph{Compatible relations of filters and stability of
  local topological properties under supremum and product}, Topology Appl.
  \textbf{{\bf 153}} (2006), 2386--2412.

\bibitem{kocinac.Ck}
Lj.D.R. Ko{\v c}inac, \emph{Closure properties of function spaces}, Appl. Gen.
  Top. \textbf{4} (2003), no.~2, 255--261.

\bibitem{lebesgue}
H.~Lebesgue, \emph{Sur le d{\'e}veloppement de la notion d'int{\'e}grale}, Mat.
  Tidsskrift \textbf{B} (1926), 54--74.

\bibitem{lintightness}
S.~Lin, \emph{Tightness of function spaces}, Appl. Gen. Top. \textbf{7} (2006),
  no.~1, 103--107.

\bibitem{selectionhyper}
G.~Di Maio, Lj.D.R. Ko{\v c}inac, and E.~Meccariello, \emph{Selection
  principles and hyperspace topologies}, Top. Appl. \textbf{153} (2005),
  912--923.

\bibitem{McCoy}
R.~A. McCoy and I.~Ntantu, \emph{Topological properties of spaces of continuous
  functions}, Springer-Verlag, 1988.

\bibitem{millerfremlin}
A.W. Miller and D.H. Fremlin, \emph{On some properties of {H}urewicz, {M}enger
  and {R}othberger}, Fund. Math. \textbf{129} (1988), 17--33.

\bibitem{mynard.uKc}
F.~Mynard, \emph{First-countability, sequentiality and tightness of the upper
  {K}uratowski convergence}, Rocky Mountain J. of Math. \textbf{33} (2003),
  no.~3, 1011--1038.

\bibitem{Pytkeev}
E.~G. Pytkeev, \emph{On the sequentiality of spaces of continuous functions},
  Communications Moscow Math. Soc. (1982), 190--191.

\bibitem{ReczSipa99}
E.~A. Reznichenko and O.~V. Sipacheva, \emph{Properties of {F}r\'echet-{U}ryson
  type in topological spaces, groups and locally convex spaces}, Vestnik
  Moskov. Univ. Ser. I Mat. Mekh. (1999), no.~3, 32--38, 72. \MR{MR1711871
  (2000h:54006)}

\bibitem{scheepers.combI}
M.~Scheepers, \emph{Combinatorics of open covers {I}: {R}amsey {T}heory}, Top.
  Appl. \textbf{69} (1996), 31--62.

\bibitem{schwarz.powers}
F.~Schwarz, \emph{Powers and exponential objects in initially structured
  categories and application to categories of limits spaces}, Quaest. Math.
  \textbf{{\bf 6}} (1983), 227--254.

\end{thebibliography}

\end{document}